\documentclass[10pt. oneside]{amsart} 
\usepackage[lmargin=1in,rmargin=1in,
bmargin=1in, tmargin=1in]{geometry}
\usepackage[dvipsnames]{xcolor}
\usepackage{amsmath, amssymb,tikz}
\usepackage{tikz-cd}
\usepackage{tikz}
\usepackage{todonotes}
\usepackage{mathrsfs}
\usepackage{extarrows}
\usepackage{graphicx}
\usepackage[breaklinks, pagebackref]{hyperref}
\usepackage{IEEEtrantools}
\usepackage{mathrsfs}
\usepackage[utf8]{inputenc}
\usepackage[english]{babel}
\usepackage{comment}
\usepackage{csquotes}
\usepackage[shortlabels]{enumitem}
\usepackage{mathrsfs}
\usepackage{stmaryrd}
\usepackage[shortlabels]{enumitem}
\usepackage{nicematrix}  
\usepackage{tikz-cd}
\usepackage{tikz}
\usetikzlibrary{decorations.pathreplacing,matrix,calc,positioning}
\usepackage{nicematrix}
\tikzset{ 
    table/.style={
        matrix of math nodes,
        row sep=-\pgflinewidth,
        column sep=-\pgflinewidth,
        nodes={rectangle,text width=3em,align=center},
        text depth=1.25ex,
        text height=2.5ex,
        nodes in empty cells,
        left delimiter=[,
        right delimiter={]},
        ampersand replacement=\&
    }
}
\usetikzlibrary{decorations.pathreplacing, calligraphy}

\hypersetup{
    colorlinks=true,
    linkcolor=blue,
    filecolor=red, 
    citecolor=red,          
    urlcolor=gray,
    pdftitle={distrelEis},
    pdfpagemode=FullScreen,
    }
\makeatletter
\newcommand*{\encircled}[1]{\relax\ifmmode\mathpalette\@encircled@math{#1}\else\@encircled{#1}\fi}
\newcommand*{\@encircled@math}[2]{\@encircled{$\m@th#1#2$}}
\newcommand*{\@encircled}[1]{%
  \tikz[baseline,anchor=base]{\node[draw,circle,outer sep=0pt,inner sep=.2ex] {#1};}}
\makeatother

\usepackage{pict2e,picture}

\makeatletter
\newcommand{\pnrelbar}{%
  \linethickness{\dimen2}%
  \sbox\z@{$\m@th\prec$}%
  \dimen@=1.1\ht\z@
  \begin{picture}(\dimen@,.4ex)
  \roundcap
  \put(0,.2ex){\line(1,0){\dimen@}}
  \put(\dimexpr 0.5\dimen@-.2ex\relax,0){\line(1,1){.4ex}}
  \end{picture}%
}

\newcommand{\precneq}{\mathrel{\vcenter{\hbox{\text{\prec@neq}}}}}
\newcommand{\prec@neq}{%
  \dimen2=\f@size\dimexpr.04pt\relax
  \oalign{%
    \noalign{\kern\dimexpr.2ex-.5\dimen2\relax}
    $\m@th\prec$\cr
    \noalign{\kern-.5\dimen2}
    \hidewidth\pnrelbar\hidewidth\cr
  }%
}
\makeatother

\DeclareFontEncoding{LS1}{}{}
\DeclareFontSubstitution{LS1}{stix}{m}{n}

\makeatletter
  \newcommand*\textmathversion{\csname textmv@\math@version\endcsname}
  \newcommand*\textmv@normal{m}
  \newcommand*\textmv@bold{b}
\makeatother
\DeclareFontEncoding{LS1}{}{}
\DeclareFontSubstitution{LS1}{stix}{m}{n}

\makeatletter
\usepackage{capt-of}

\DeclareFontEncoding{LS1}{}{}
\DeclareFontSubstitution{LS1}{stix}{m}{n}

\makeatletter

\DeclareFontEncoding{LS1}{}{}
\DeclareFontSubstitution{LS1}{stix}{m}{n}

\makeatletter

\DeclareFontEncoding{LS1}{}{}
\DeclareFontSubstitution{LS1}{stix}{m}{n}

\makeatletter

\usepackage{setspace}\setdisplayskipstretch{}
\setcounter{tocdepth}{1}
\usepackage{dynkin-diagrams}

\usepackage{amsmath,amsthm,amssymb}
\usepackage{colonequals}

\newcommand{\QQ}{\mathbb{Q}}

\newcommand{\ZZ}{\mathbb{Z}}

\newcounter{dummypart}

\makeatletter

\makeatother

\newcommand{\CC}{\mathbb{C}}

\newcommand{\ch}{\mathrm{ch}}

\newcommand{\delT}{\mathbb{S}}   
    
\newcommand{\Sh}{\mathrm{Sh}}    
\newcommand{\Ab}{\mathbb{A}}   
    
\newcommand{\pr}{\mathrm{pr}}

\DeclareMathOperator{\supp}{supp}

\numberwithin{equation}{subsection}

\newtheorem{theorem}[equation]{Theorem}
\newtheorem{proposition}[equation]{Proposition}
\newtheorem{lemma}[equation]{Lemma}
\newtheorem{corollary}[equation]{Corollary}
\newtheorem*{corollary*}{Corollary}






\newtheorem{theoremx}{Theorem}

\theoremstyle{definition}

\newtheorem{definition}[equation]{Definition}

\theoremstyle{remark}
\newtheorem{remark}[equation]{Remark}

\newtheorem{note*}[equation]{Note}

\tikzcdset{scale cd/.style={every label/.append style={scale=#1},
    cells={nodes={scale=#1}}}}

\usepackage{tikz-cd}
\usepackage{mathrsfs}

\DeclareFontFamily{U}{wncy}{}
\DeclareFontShape{U}{wncy}{m}{n}{<->wncyr10}{}
\DeclareSymbolFont{mcy}{U}{wncy}{m}{n}
\DeclareMathSymbol{\sha}{\mathord}{mcy}{"58}

\makeatletter
\renewcommand*\env@matrix[1][\arraystretch]{%
  \edef\arraystretch{#1}%
  \hskip -\arraycolsep
  \let\@ifnextchar\new@ifnextchar
  \array{*\c@MaxMatrixCols c}}
\makeatother







\newcommand{\Gb}{\mathbf{G}}




\newcommand{\Addresses}{{
  \bigskip
\footnotesize
  (Shah) \textsc{Department of Mathematics,
University of California
Santa Barbara, CA 93106-3080}\par\nopagebreak
  \textit{E-mail address}: \texttt{swshah@ucsb.edu}
}}

\usepackage{mathtools,bm}

\DeclarePairedDelimiterX\Set[1]\{\}{%
  #1%
}

\usepackage{stmaryrd}

\newcommand{\GG}{\mathbb{G}}

\newcommand{\GL}{\mathrm{GL}}

\newcommand{\Et}{\mathrm{\acute{E}t}}

\newcommand{\et}{\mathrm{\acute{e}t}}

\newcommand{\RR}{\mathbb{R}}

\usepackage{setspace}
\usepackage[all]{xy}
\usepackage{textcomp}
\usepackage{amsbsy}
\usepackage{datetime}
\renewcommand{\dateseparator}{-}
\renewcommand{\today}{\the\year \dateseparator \twodigit\month
\dateseparator \twodigit\day}

\title{On distribution relations of polylogarithmic Eisenstein classes
}
\author{Syed Waqar Ali    Shah}    
\date{}

\begin{document} 
\begin{abstract}   We show that    for Siegel modular varieties of arbitrary genus, the  natural  distribution relations satisfied by certain integral Eisenstein  cohomology  classes   defined    by Kings admit an adelic refinement. This generalizes the classical relations for Siegel units on modular  curves.      
\end{abstract}    

\maketitle
\tableofcontents
\section{Introduction}  

\label{introsec}  

Motivic cohomology classes such as  Beilinson's  Eisenstein symbols  have several   important arithmetic applications. Kato  \cite{kkato}     used these symbols       to construct Euler systems for Galois representations attached to 
newforms and obtained spectacular results towards $p$-adic Birch-Swinnerton Dyer and Iwasawa main conjectures in these settings. The construction of Kato's Euler system makes essential   use 
of the so-called \emph{distribution relations} satisfied by these elements.  These  relations  describe the behaviour of Eisenstein symbols  under pushforward, pullback and conjugation morphisms between modular curves.  
Colmez  \cite{Colmez} later gave an adelic reformulation of these relations in  the analogous setting of 
modular forms. The adelic version is more useful since, e.g.,      it aids the construction of Euler systems via representation theoretic methods.

The construction of Beilinson's symbols can be generalized to other Shimura varieties by means of  polylogarithms on  abelian schemes. The resulting classes are  again of  motivic origin and 
referred to as  Eisenstein classes by analogy.  Using an Iwasawa theoretic  approach, Kings \cite{Kings15} showed that the $p$-adic  \'{e}tale realization of these classes satisfies   a $p$-adic interpolation property  in  varying  weights. In particular,  they enjoy certain integrality properties.

The modest  purpose of this note is to verify that the analogous adelic distribution relations  hold   integrally  for polylogarithmic Eisenstein classes constructed in the cohomology Siegel modular varieties of arbitrary  genus. For genus greater than one,  Colmez's argument does not immediately transfer to Kings' setting  owing partly to the failure of Galois descent in cohomology and requires solving a non-trivial     lifting problem.  The adelic relations play  a pivotal role in the     construction of Euler systems via pushforwards of test vectors     in the cohomology of more general Shimura varieties e.g.,  see  \cite{LSZ}. The formulation presented here is also needed in two forthcoming works \cite{Siegel2} and \cite{EulerGU22}.  It is our hope that the adelic extension of Kings' theory presented here for higher genus will find similar applications.  
\subsection{Main result}    
Let $( V_{\ZZ}, \psi) $ denote the standard symplectic $ \ZZ$-module of rank $ 2n $
(see  \S \ref{Siegelreview})            and 
$ \Gb  :  = \mathrm{GSp}_{2n}(V_{\ZZ}, \psi) $ denote  the $ \ZZ$-group scheme of  automorphisms of $ V_{\ZZ} $ that preserve $ \psi $ up to a scalar. For any ring $ R $, we denote $ V_{R} := V_{\ZZ} \otimes_{\ZZ} R  $.  Let $ p $ be a rational  prime and  $ c > 1 $ an integer with $ (c,p) = 1   $. We denote $ \ZZ_{cp}  :  = \prod_{\ell  |    cp} \ZZ_{\ell} $,   $   \Ab    _{f}^{cp}  :  = \Ab_{f}/\ZZ_{cp}  $ the group of finite rational  adeles away from primes dividing $ cp $  and  $ G := \Gb ( \Ab_{f}^{cp} )  \times  \Gb(\ZZ_{cp} ) $.  Let $ \mathcal{S}^{cp} $ denote the $ \ZZ_{p}$-module of all locally constant compactly supported  $ \ZZ_{p}$-valued   functions on $V_{\Ab_{f}} \setminus \left \{ 0 \right \} $ of the form $  \phi_{cp} \otimes \phi^{cp} $  where $ \phi_{cp} $ is the characteristic function of $ V_{\ZZ_{cp}} $ and $ \phi^{cp} $ is a function on $ V_{\Ab_{f}^{cp}} \setminus  \left \{ 0  \right  \}     $.  Then  $ \mathcal{S}^{cp} $ is a  smooth $ G$-representation. For $ K \subset G $ a subgroup, we denote by $ \mathcal{S}^{cp}(K)   \subset  \mathcal{S}^{cp}  $ the submodule of $ K $-invariants. For $ N \geq 1 $, let $ K_{N} \subset G $ denote the principal congruence subgroup of level $ N $, i.e., the subgroup of $ \Gb(\widehat{\ZZ}) $ that acts trivially on $ V_{\ZZ} / N V_{\ZZ} $. For each neat  compact open subgroup $ K \subset G $, let $ \mathrm{Sh}(K) $ denote the Siegel modular variety over $ \QQ $ of level $ K $ and $ \mathcal{A}_{K} \to \mathrm{Sh}(K) $ the universal abelian scheme.  Let $  \mathscr{H}_{\ZZ_{p}} = \mathscr{H}_{\mathcal{A_{K}}, \ZZ_{p}}  $ denote  the  $ \ZZ_{p}$-sheaf on $ \mathrm{Sh}(K) $ of $p$-adic Tate modules of $ \mathcal{A}_{K} $ and $ \mathscr{H}_{\QQ_{p}} $ the corresponding $ \QQ_{p}$-sheaf.  For $ k \geq 0 $,  let  $  \Gamma^{k} (\mathscr{H}_{\ZZ_{p}} ) $ (resp., 
 $  \mathrm{Sym}^{k} (  \mathscr{H}_{ \QQ_{p}} )) $ denote  the   $ k $-th divided power (resp.,  $k$-th 
 symmetric  power) sheaf.      For each torsion section $ t : \mathrm{Sh}(K) \to \mathcal{A}_{K} \setminus \mathcal{A}[c] $, Kings \cite{Kings15} has constructed  a \emph{$p$-adic \'{e}tale
 Eisenstein class}    
 $$ {}_{c} \mathrm{Eis}^{k} _ { \QQ_{p}} 
  (t) 
  \in  \mathrm{H}^{2n-1}_{\et}  \big ( 
 \mathrm{Sh}(K) ,  \mathrm{Sym}^{k} (  \mathscr{H}_{\QQ_{p}})   (  n )      \big   )    $$
 in the continuous  \'{e}tale   cohomology \cite{Jannsen1988} of $ \mathrm{Sh}(K)$.  For $K \subset G $ a neat compact open subgroup, let $ \mathcal{E}^{k}(K) $ denote the $ \ZZ_{p}$-submodule of $   \mathrm{H}^{2n-1}_{\et}     \big(\mathrm{Sh}(K),\mathrm{Sym}^{k}(\mathscr{H}_{\QQ_{p}})(n)\big) $  given by the image of  cohomology with coefficients in $ \Gamma_{k}(\mathscr{H}_{\ZZ_{p}})  ( n )     $.   The main result of \cite{Kings15}  implies that $ N^{k}  {}_{c}\mathrm{Eis}^{k}_{\QQ_{p}}  ( t )  \in \mathcal{E}^{k} (K ) $ if $ t $ is $N$-torsion for $ N $  satisfying $ (N,c) = 1 $.  For $ N \geq 3 $ and $ v \in V_{\widehat{\ZZ} }  \setminus  N  V_{  \widehat{\ZZ}   }     $, let $ t_{v,N} : \mathrm{Sh}(K_{N}) \to \mathcal{A}_{K_{N}} $ denote the section  corresponding  to     $ v + N V_{\widehat{\ZZ}}  \in    V_{\ZZ}/ N V_{\ZZ} $   under  the   
 universal level $ N $ structure on $ \mathcal{A}_{K_{N}}$ and $ \xi_{v,N}  \in  \mathcal{S}^{cp}(K_{N})  $ denote the  characteristic  function of $ v + N V_{\widehat{\ZZ}}    $.  Finally,      let $ \Upsilon $ be the collection of all compact open subgroups of $ G $  which are $G$-conjugate to a subgroup of $K_{N}$ for some  $ N \geq 3 $ satisfying $ (N,cp) = 1 $ and which are of  the form $ \mathbf{G}    (\ZZ_{cp}  )L $ for some   $ L \subset \Gb(\Ab_{f}^{cp}) $.    
 \begin{theoremx}[Theorem \ref{mainpararesult}]  \label{mainteo}    There exist a unique 
 collection of $ \ZZ_{p}$-module homomorphisms  $$ \varphi^{k}(K) : \mathcal{S}^{cp}(K)  \to  \mathcal{E}^{k}(K) $$     
indexed by   $ K \in \Upsilon $ satisfying the following conditions:
 \begin{itemize}   
  \setlength\itemsep{0.2em}
     \item  for each $ N \geq 3 $  prime to $ cp $ and $ v \in V_{\widehat{\ZZ}}  \setminus N  V_{\widehat{\ZZ} }  $,  $ \varphi^{k}(K_{N}) ( \xi_{v,N} )  =  N ^ { k }    {}_{c}   \mathrm{Eis}^{k}_{\QQ_{p}}(t_{v,N}) $,   
     \item for each $ K , L \in \Upsilon $ satisfying $ L \subset K $, we have  commutative diagrams
     \begin{center}
     \begin{tikzcd}[sep =  large, column sep = 5  
  em]   
     \mathcal{S}^{cp}(L )   \arrow[r, "\varphi^{k}(L)"]   \arrow[d, "{\pr_{*}}"']  &   \arrow[d, "{\pr_{*}}"]  \mathcal{E}^{k}(L)   &      \mathcal{S}^{cp}(L)   \arrow[r ,  " \varphi^{k}(L)"]     &    \mathcal{E}^{k}(L)   \\
     \mathcal{S}^{cp}(K)   \arrow[r,  "\varphi^{k}(K)"  ] &     \mathcal{E}^{k}(K)  & \mathcal{S}^{cp}(K)  \arrow[r  ,   "  \varphi^{k}(K)  " ]  \arrow[u, "{\pr^{*}}", hookrightarrow ]       
    &   \mathcal{E}^{k}(K)     \arrow[u,  "{\pr^{*}}"  '  ,  hookrightarrow ]    
     \end{tikzcd}
     \end{center}    
     where $ \pr_{*} $ and $ \pr^{*} $ denote respectively trace and inclusion  maps, 
     \item for each $ K \in  \Upsilon $ and $ g \in  G  $, we  have  a  commutative   diagram   
     \begin{center}   
     \begin{tikzcd}   [sep =  large,  column sep = 5 em]       \mathcal{S}^{cp}(K)  \arrow[r, "\varphi^{k}(K)"]   \arrow[d, "{[g]^{*}}"']    &   \mathcal{E}^{k}(K)   \arrow[d, "{[g]^{*}}"]  \\   
     \mathcal{S}^{cp}(gKg^{-1})   \arrow[r, "\varphi^{k}(gKg^{-1})"]   &    \mathcal{E}^{k}(gKg^{-1})
     \end{tikzcd} 
     \end{center}     
where $ [g]^{*}  $  denotes the (contravariant) conjugation  isomorphisms. 
 \end{itemize}
\end{theoremx}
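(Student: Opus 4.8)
The plan is to deduce uniqueness from a structural description of $\mathcal{S}^{cp}$ and then to construct the maps from the distribution relations satisfied by Kings' classes, the genuine difficulty lying in the passage to integral coefficients.

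The first point I would establish is a spanning statement: for every $K\in\Upsilon$ the $\ZZ_{p}$-module $\mathcal{S}^{cp}(K)$ is generated by the elements $\pr_{*}(\xi_{v,N})$, where $N\geq 3$ is prime to $cp$, $\xi_{v,N}\in\mathcal{S}^{cp}(K_{N})\subseteq\mathcal{S}^{cp}(K\cap K_{N})$, and $\pr_{*}$ is the trace map for $K\cap K_{N}\subseteq K$. Indeed a locally constant function on $V_{\Ab_{f}}\setminus\{0\}$ with compact support has support bounded away from $0$, hence is a finite $\ZZ_{p}$-combination of characteristic functions of cosets $v+NV_{\widehat{\ZZ}}$ with $(N,cp)=1$; grouping by $K$-orbits and observing that the relevant stabilizer indices divide $[K:K\cap K_{N}]$ — which is prime to $p$, since $K$ and $K_{N}$ both contain $\Gb(\ZZ_{cp})$ — realizes every orbit sum as a $\ZZ_{p}$-multiple of some $\pr_{*}(\xi_{v,N})$. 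I would also record that the only relations among the $\xi_{v,N}$ in $\mathcal{S}^{cp}$ are generated by the refinements $\xi_{v,N}=\sum_{w}\xi_{w,NM}$. Uniqueness is then formal: conjugation-equivariance reduces the computation of $\varphi^{k}(K)$ to the case $K\subseteq K_{N}$, the pushforward and pullback squares give $\varphi^{k}(K)(\pr_{*}\xi_{v,N})=\pr_{*}\pr^{*}\bigl(\varphi^{k}(K_{N})(\xi_{v,N})\bigr)$, and the first condition identifies $\varphi^{k}(K_{N})(\xi_{v,N})=N^{k}\,{}_{c}\mathrm{Eis}^{k}_{\QQ_{p}}(t_{v,N})$, so the whole collection is pinned down.

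For existence the inputs are, first, the \emph{basic distribution relations}: the behaviour of the normalized classes $N^{k}\,{}_{c}\mathrm{Eis}^{k}_{\QQ_{p}}(t_{v,N})$ under pullback and trace along $\mathrm{Sh}(K_{NM})\to\mathrm{Sh}(K_{N})$ matching the refinements $\xi_{v,N}=\sum_{w}\xi_{w,NM}$ (the factor $N^{k}$ being precisely what makes these hold), together with their equivariance under the conjugation action of $G$ — all consequences of the norm-compatibility of the polylogarithm underlying \cite{Kings15}. Second, Kings' integrality $N^{k}\,{}_{c}\mathrm{Eis}^{k}_{\QQ_{p}}(t)\in\mathcal{E}^{k}(K)$ for $N$-torsion $t$ with $(N,c)=1$. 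Third, the elementary compatibility of trace, pullback and conjugation on $\Gamma_{k}(\mathscr{H}_{\ZZ_{p}})(n)$-cohomology with the comparison to $\mathrm{Sym}^{k}(\mathscr{H}_{\QQ_{p}})(n)$-cohomology, which shows $\pr_{*}$, $\pr^{*}$ and $[g]^{*}$ preserve the submodules $\mathcal{E}^{k}(-)$. Granting the basic relations, the assignment $\xi_{v,N}\mapsto N^{k}\,{}_{c}\mathrm{Eis}^{k}_{\QQ_{p}}(t_{v,N})$ extends to a $G$-equivariant homomorphism $\Phi^{k}\colon\mathcal{S}^{cp}\to\varinjlim_{K}\mathrm{H}^{2n-1}_{\et}\bigl(\mathrm{Sh}(K),\mathrm{Sym}^{k}(\mathscr{H}_{\QQ_{p}})(n)\bigr)$ along pullbacks; restricting $\Phi^{k}$ to $\mathcal{S}^{cp}(K)$, expanding in the generators $\pr_{*}(\xi_{v,N})$ and invoking the integrality together with the stability of $\mathcal{E}^{k}$, one finds the image in $\mathcal{E}^{k}(K)$ and sets $\varphi^{k}(K):=\Phi^{k}|_{\mathcal{S}^{cp}(K)}$. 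The three conditions then follow: the third from $G$-equivariance, the second because $\pr_{*}$ is realized by $\sum_{g}[g]$ on both source and target, and the first by construction.

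The main obstacle is exactly the well-definedness and $G$-equivariance of $\Phi^{k}$ — the ``non-trivial lifting problem'' of the introduction. In genus one the relevant Eisenstein symbols are available compatibly at all levels and Galois descent makes the adelic map essentially transparent; here the required compatibilities live most naturally in Kings' Iwasawa-theoretic (weight-interpolating) construction, the cohomology in degree $2n-1$ is not controlled by descent alone, and one must work with the image submodules $\mathcal{E}^{k}$ rather than full cohomology. I expect the heart of the argument to be the reduction of each relation to be checked — independence of the choice of $N$, of the conjugating element, and of the chosen expression of $\phi\in\mathcal{S}^{cp}(K)$ in the generators — to the principal-level relations above, performed through the coset combinatorics of the spanning statement and using systematically that every index occurring between members of $\Upsilon$ is prime to $p$ (so that one may average over finite quotients inside $\ZZ_{p}$-modules and descend along the prime-to-$cp$ covering maps) and that full level at $c$ keeps Kings' integrality in force throughout.
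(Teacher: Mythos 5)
Your overall architecture — construct a $G$-equivariant map on a colimit from the rational distribution relations, then restrict to finite levels and argue integrality — matches the paper's, and you have correctly identified the crux (the integral lifting problem) as well as the key numerical fact that indices $[K:K\cap K_N]$ are prime to $p$ because every $K\in\Upsilon$ has full level at $cp$. There is, however, a genuine difference in how you propose to solve the lifting problem, and a substantive gap in how you set up the colimit map. Your route to integrality at a level $K\in\Upsilon$ is to write a $K$-orbit sum as $[K_v:K\cap K_N]^{-1}\pr_{K\cap K_N,K,*}(\xi_{v,N})$, push $N^k\,{}_c\mathrm{Eis}^k_{\QQ_p}(t_{v,N})\in\mathcal{E}^k(K_N)$ through $\pr^*$ and $\pr_*$ using the stability of the $\ZZ_p$-sublattice (Lemma \ref{conjcomp}), and then multiply by the unit $[K_v:K\cap K_N]^{-1}\in\ZZ_p^\times$. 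This is correct and works, but it is \emph{not} what the paper does: in Step~4 the paper instead uses that torsion sections — unlike étale cohomology — satisfy Galois descent, so that $t_{v,N}$ descends to a genuine $N$-torsion section $t_{v,K_v}$ of the universal abelian scheme over $\mathrm{Sh}(K_v)$, and one may apply Kings' integrality theorem \emph{directly} at level $K_v$ without any averaging. Both arguments produce the same element (this follows from the cohomological axiom and Lemma \ref{Eispullbackcompatibility}); yours is arguably more uniform, while the paper's exhibits a canonical integral representative without reference to ambient indices.

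The gap is the sentence ``Granting the basic relations, the assignment $\xi_{v,N}\mapsto N^k\,{}_c\mathrm{Eis}^k_{\QQ_p}(t_{v,N})$ extends to a $G$-equivariant homomorphism $\Phi^k$.'' This is not a small grant: verifying that the assignment is well-defined and $G$-equivariant is the bulk of the work, and it is where the Shimura-theoretic input of \S\ref{Siegelsec} enters. Concretely, one must (i) check pullback compatibility for all morphisms $(K_N\xrightarrow{\kappa}K_M)$ in $\mathcal{P}(K_1,\Upsilon_1)$ via Lemmas \ref{Eispullbackcompatibility} and \ref{Eisnormcomp} applied to explicit universal isogenies (\ref{univisog}); (ii) extend to the action of the centre $\mathbf{Z}(\QQ)^{cp}$, which is unavoidable because $\xi_{v,N}$ is only defined for $v\in V_{\widehat{\ZZ}}$ — your spanning statement silently invokes $\ch(v+NV_{\widehat{\ZZ}})$ for $v\in V_{\Ab_f}$ outside $V_{\widehat{\ZZ}}$, which is not of the form $\xi_{v',N'}$ and must first be rescaled by some $z_a$ with $a$ prime to $cp$; and (iii) prove $G$-equivariance for general $g\in\Gb(\Ab_f)$, which again requires the isogeny description of $[g]^*_{N,M}$. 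The paper organizes these as Steps~1, 2 and 5, and uses a clean formal device (Lemma \ref{res+conj=ind}: a pullback-compatible family of maps into an injective-restrictions Mackey functor is automatically pushforward-compatible) to deduce the trace-compatibility square from the pullback square. That lemma, and the explicit treatment of $\mathbf{Z}(\QQ)^{cp}$, are the pieces your proposal would still need to supply.
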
     

We refer to our result as an \emph{integral  parametrization} of Eisenstein classes by Schwartz spaces. For $ n = 1 $ and $ k = 0 $, our  result recovers the distribution relations for Kummer images of Siegel units proved in   \cite[\S 1-2]{kkato}.    We remark  that the classes $ {}_{c}\mathrm{Eis}_{\QQ_{p}}^{k}(t_{v,N}) $ for $ k \geq 1 $ in this case are closely related  to     the     Soul\'{e}  twisting construction applied to the classes for $ k = 0 $ \cite[\S 4.7]{Kings14}. For $ n \geq  2 $,  Lemma  \cite{LemmaResidue} has established  that these classes are not all zero for certain weights  $ k  $.       

The   core  ideas that go into the proof of Theorem \ref{mainteo}   are derived from \cite{kkato} and \cite{Colmez}.  However, we  must   carefully  address some complications not encountered in these works.  One of the issues is in defining the maps $\varphi^{k}(K) $ at, say, principal levels $K = K_{N} $   for functions that are not supported on $ V_{\widehat{\ZZ}} \setminus    \left     \{ 0 \right \} $ and constant modulo $ N V_{\widehat{\ZZ} } $. More precisely, there is no  obvious way to attach a linear combination of torsion sections for such functions. In the setting of \cite[\S 1] {Colmez}, the passage from integral to rational adeles is made by defining the action of the adelic group  in  the limit, using  which   maps at finite level can be recovered   by     taking invariants. But since the $K$-invariants of the $G$-representation $ \varinjlim_{ L } \mathcal{E}^{k} ( L )    $ are not necessarily equal to $ \mathcal{E}^{k}(K) $,  one cannot define $ \varphi^{k}(K) $ in this manner without   potentially  violating integrality,  a   crucial  requirement in the context of  Euler systems. So one has  to  construct for  each   $K$-invariant Schwartz function a class in $ \mathcal{E}^{k}(K) $ which  lifts the corresponding  class in  the  limit.      We show,  among  other   things,   that       these    lifts can indeed be constructed compatibly  for all   levels $ K \in  \Upsilon $  using   Hecke correspondences, Galois descent for torsion sections and  some elementary  topological  properties of the action of $ G  $  on $V_{\Ab_{f} } $.    

\subsection{Acknowledgements}The author is grateful to Guido Kings for answering several questions, to Dori Bejleri and Ananth Shankar for clarifying some details in \S \ref{Siegelsec}, and to Andrew Graham for several useful discussions. The author extends gratitude to the anonymous referee for their feedback on an earlier version of this note  and to David Loeffler for sharing his thoughts on an alternative strategy for establishing these relations in the case of modular curves.
\section{$p$-adic polylogarithms} 
\newcommand{\Log}{\mathcal{L}\mathrm{og}} 
\label{eisensteinsec}
In this section, we recall  the construction and $p$-adic interpolation of   Eisenstein  classes via polylogarithms following  \cite{HuberKings} and \cite{Kings15}. The  main  purpose is to  establish  some basic ``distribution relations"   (\S \ref{distributioneisenstein}) that describe the effect of  isogenies and base change on these  classes.  Except for \S\ref{distributioneisenstein}, the content  is  based on the original works and we skip most of the proofs within  these subsections.   

Throughout, $ \mathbf{Sch} $  denotes     the category of finite type separated schemes over a fixed Noetherian regular scheme of dimension at most $ 1 $.  For any $ X \in \mathbf{Sch} $, we let $ \mathbf{Sh}(X_{\et}) $ denote the category of     \'{e}tale sheaves of abelian groups on the small \'{e}tale site $ X_{\et} $ of $ X $ and  $ \mathbf{Sh}(X_{\et})^{\mathbb{N}}  $ the category of inverse systems on $ \mathbf{Sh}(X_{\et}) $. The $i$-th right derived functors of inverse limits of global sections of $ \mathscr{F}  = ( \mathscr{F}_{n})  _ { n \geq  1  }      \in \mathbf{Sh}(X_{\et}) ^ { \mathbb{  N } }  $ is denoted $ \mathrm{H}^{i}_{\et}(X, \mathscr{F}) $ and referred to as continuous \'{e}tale cohomology \cite{Jannsen1988}.    For $ p $ a rational prime  invertible on  $ X$,  we let $  \Et(X)_{\ZZ_{p}}  $ denote the abelian category of (constructible) $ \ZZ_{p} $-sheaves and its isogeny category of $ \QQ_{p} $-sheaves  by $ \Et(X)_{\QQ_{p}} $.  For $ \Lambda \in \left \{ \ZZ_{p}, \QQ_{p} \right\} $, we let $ \mathscr{D}(X )_{   \Lambda  }  $ denote the bounded ``derived"   category of $ \Et(X)_{ \Lambda } $ in the sense of \cite[Theorem 6.3]{Ekedahl}\footnote{See  \cite[\S 0]{HuberKings99} for a short explanation how this differs from an  ordinary derived category. See also \cite[Appendix A.1]{Anticyclo}.}.  There is a   full six functor formalism on these  and for any $ \mathscr{F} \in  \Et(X)_{\Lambda} $, we have   $ \mathrm{Hom}_{\mathscr{D}(X)_{\Lambda}}(\Lambda, \mathscr{F}[i]) = \mathrm{H}^{i}_{\et}(X , \mathscr{F} ) $ for all $ i \geq 0 $ 
by   \cite[Lemma 4.1]{Huber}.

\subsection{Purity for unipotent sheaves}   
\label{unipotentsection}    
Let $ \pi :X \to S $  be a  separated morphism of finite type   in   $ \mathbf{Sch} $.    A $ \ZZ_{p} $-sheaf $ \mathscr{F}   \in   \mathrm{\acute{E}t} ( X )_{\ZZ_{p}} $ is said to be $S$-\emph{unipotent of length $ k $}  if there exists a decreasing filtration $ \mathscr{F} = \mathscr{F}^{0}  \supset   \mathscr{F}^{1}  \supset \ldots \supset  \mathscr{F}^{n}  \supset \mathscr{F}^{n+1} = 0   $  such that  the $ \mathscr{F}^{i}/ \mathscr{F} ^{i+1} $ are isomorphic to $ \pi ^ { * }  \mathscr{G}  ^  { i   }  $   for $ \ZZ_{p} $-sheaves    $ \mathscr{G}  ^ { i  }  \in   \mathrm{\acute{Et}}(S)_{\ZZ_{p}}    $. We  refer to $ \mathscr{G}^{i} $ as the $i$-th graded piece of $ \mathscr{F} $.   We can similarly define unipotence of  $ \Lambda $-sheaves for $ \Lambda = \QQ_{p} $ or  $ \Lambda = \ZZ / p^{r}  \ZZ    $ (i.e., \'{e}tale $p^{r}$-torsion sheaves). 
\begin{lemma}
\label{unipotentshrieks}  Let $ \Lambda \in \left \{ \ZZ / p^{r} \ZZ , \ZZ_{p} , \QQ_{p} \right \} $.   Suppose $ \pi_{i} :  X_{i} \to S $ for $ i = 1 , 2 $ be morphisms as above  such that $ \pi_{i} $ is smooth of relative dimension $ d_{i} $. Let $  f : X _{1} \to X_{2 }$ be any  $ S $-morphism.   Then for any $S$-unipotent $ \Lambda$-sheaf $ \mathscr{F} $ (of some  finite length), $ f^{!}  \mathscr{F}   \simeq   f^{*}   \mathscr{F}  (d_{1} - d_{2} ) [2d_{1} - 2d_{2}   ]  $ functorially in $ \mathscr{F} $. 
\end{lemma}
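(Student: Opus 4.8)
The plan is to d\'evissage along the unipotent filtration, reducing to sheaves pulled back from $ S $, where the assertion is purity for smooth morphisms; write $ d := d_1 - d_2 $. First I would treat the case $ \mathscr{F} = \pi_2^{*}\mathscr{G} $ for $ \mathscr{G} \in \Et(S)_{\Lambda} $. Since $ f $ is an $ S $-morphism, $ \pi_2 \circ f = \pi_1 $, so $ f^{!}\pi_2^{!} \simeq \pi_1^{!} $; combined with the purity isomorphisms $ \pi_i^{!} \simeq \pi_i^{*}(d_i)[2d_i] $ for the smooth morphisms $ \pi_i $, this gives
\[
 f^{!}(\pi_2^{*}\mathscr{G})(d_2)[2d_2] \;\simeq\; f^{!}\pi_2^{!}\mathscr{G} \;\simeq\; \pi_1^{!}\mathscr{G} \;\simeq\; \pi_1^{*}\mathscr{G}(d_1)[2d_1],
\]
hence $ f^{!}(\pi_2^{*}\mathscr{G}) \simeq \pi_1^{*}\mathscr{G}(d)[2d] = f^{*}(\pi_2^{*}\mathscr{G})(d)[2d] $, as desired. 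This step uses \emph{only} purity for smooth morphisms, which holds for arbitrary coefficient sheaves.

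To upgrade this to an isomorphism functorial in $ \mathscr{F} $, I would introduce the canonical Gysin morphism $ \opn{tr}_f \colon f^{*}(d)[2d] \Rightarrow f^{!} $ of triangulated functors $ \mathscr{D}(X_2)_{\Lambda} \to \mathscr{D}(X_1)_{\Lambda} $, built from the graph factorization $ X_1 \xrightarrow{\gamma} X_1 \times_S X_2 \xrightarrow{\pr_2} X_2 $: here $ \pr_2 $ is smooth of relative dimension $ d_1 $, giving $ \pr_2^{!} \simeq \pr_2^{*}(d_1)[2d_1] $, and $ \gamma $ is a section of the smooth morphism $ \pr_1 $, hence a regular closed immersion of codimension $ d_2 $, carrying a canonical fundamental class $ \gamma^{*}(-d_2)[-2d_2] \Rightarrow \gamma^{!} $; the composite of these is $ \opn{tr}_f $. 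This transformation is compatible with composition of $ S $-morphisms between smooth $ S $-schemes and restricts to the purity isomorphism when $ f $ is smooth. Applying the composition compatibility to $ \pi_1 = \pi_2 \circ f $, and using that $ \opn{tr}_{\pi_1} $ and $ \opn{tr}_{\pi_2} $ are isomorphisms by smooth purity while $ f^{*} $ preserves isomorphisms, shows that $ \opn{tr}_f $ is an isomorphism on every object $ \pi_2^{!}\mathscr{G} $, hence — after the twist and shift $ \pi_2^{!}\mathscr{G}\simeq\pi_2^{*}\mathscr{G}(d_2)[2d_2] $ — on every $ \pi_2^{*}\mathscr{G} $, consistently with the first step.

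Finally I would run the d\'evissage: the objects $ \mathscr{K}\in\mathscr{D}(X_2)_{\Lambda} $ for which $ \opn{tr}_f(\mathscr{K}) $ is an isomorphism form a triangulated subcategory stable under shifts, so from the distinguished triangles $ \mathscr{F}^{i+1}\to\mathscr{F}^{i}\to\pi_2^{*}\mathscr{G}^{i}\to\mathscr{F}^{i+1}[1] $ attached to the unipotent filtration $ \mathscr{F}=\mathscr{F}^{0}\supset\cdots\supset\mathscr{F}^{n+1}=0 $ and the previous paragraph, descending induction on $ i $ gives that $ \opn{tr}_f(\mathscr{F}) $ is an isomorphism for every $ S $-unipotent $ \mathscr{F} $. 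This produces the isomorphism of the lemma, natural in $ \mathscr{F} $. The cases $ \Lambda=\Zp $ and $ \Lambda=\Qp $ then follow from the $ \ZZ/p^{r}\ZZ $-case by passing to the limit over $ r $ and to the isogeny category, purity being compatible with both operations.

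I expect the only non-formal input to be the construction of $ \opn{tr}_f $ together with its compatibility with composition: this is where one uses that the graph of $ f $ is a regular immersion — equivalently, that $ X_1 $ and $ X_2 $ are smooth over $ S $ — and that purity for smooth morphisms is available in the categories $ \mathscr{D}(-)_{\Lambda} $ for $ \Lambda=\Zp,\Qp $ and not merely for $ p^{r} $-torsion coefficients. Granting that, the reduction steps are routine.
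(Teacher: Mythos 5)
The paper states this lemma without proof (\S\ref{unipotentsection} notes that proofs in \S\ref{eisensteinsec}, outside of \S\ref{distributioneisenstein}, are deferred to the original works such as \cite{HuberKings} and \cite{Kings15}), so there is no internal argument to compare against; your write-up is, in effect, the missing proof. It is correct and is essentially the standard argument one finds in those references: d\'evissage along the unipotent filtration, smooth purity and $f^!\pi_2^! \simeq \pi_1^!$ to handle pullbacks from $S$, and --- the key extra care you correctly identified --- a globally defined Gysin transformation $\opn{tr}_f \colon f^*(d)[2d] \Rightarrow f^!$ built from the graph factorization so that the resulting isomorphism is a single natural transformation and not a family of objectwise choices, which is exactly what ``functorially in $\mathscr{F}$'' requires. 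The reduction from $\ZZ_p$ (resp.\ $\QQ_p$) to $\ZZ/p^r\ZZ$ is also fine in the Ekedahl-style categories $\mathscr{D}(-)_\Lambda$ the paper works in. No gaps.
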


\subsection{The $\QQ_{p}$-logarithm} 
\label{Qppolylogsection}    
Let   $ \pi : A \to S \in  \mathbf{Sch}  $ denote an abelian scheme     of relative dimension $ d $, i.e.,  $ A $ is a  group scheme and $ \pi $ is a smooth proper morphism with connected geometric fibers of dimension $ d $. The unit section is denoted by $ e : S \to  A $.   Let $ p $ be a  prime  invertible on $ S  $.      The \emph{$ p $-adic Tate  module} of $ \pi $ is defined to be first relative homology 
\begin{equation}   \label{tate}      \mathscr{H}_{\ZZ_{p}}   :  =  \mathscr{H}om_{S}(R^{1}\pi_{*}\ZZ_{p} ,  \ZZ_{p}) =   R^{2d-1}  \pi_{* } \ZZ_{p}(d)  
\in  \mathrm{\acute{E}t}(S)_{\ZZ_{p}}
\end{equation} of $  A $ with respect to $ S $. It is a lisse $ \ZZ_{p}$-sheaf and fiberwise equals the Tate module. We  let $ \mathscr{H}_{\QQ_{p}}  :    = \mathscr{H} _ { \ZZ_{p}   }     \otimes \QQ_{p} \in  \mathrm{\acute{E}t}(S)_{\QQ_{p} } $ denote  the corresponding $ \QQ_{p}$-sheaf. For $ r \geq 1 $, we similarly define $  \mathscr{H}_{\Lambda_{r}}    $ where  $ \Lambda _{ r }    :   = \ZZ / p ^{r} \ZZ  $.    Then  \begin{equation}   \label{HrisApr}    A [p^{r}]  \simeq  \mathscr{H}_{\Lambda_{r}} \simeq  \mathscr{H}  _ { \ZZ_{p }   }     \otimes _ { \ZZ_{p}   }    \ZZ /p^{r} \ZZ. 
\end{equation} where $ A[p^{r}] $ on the left denotes    the associated  representable  sheaf of $ p^{r}$ torsion in $ A $.  In what follows, we denote $ \mathscr{H}_{\ZZ_{p}} $ simply by $ \mathscr{H} $ if no   confusion can arise.

The low term exact sequence  for  the Leray spectral sequence associated with    $ \mathscr{H}om_{S}(\ZZ_{p}, -) \circ \pi_{*} $ evaluated at  $ \pi^{*}  \mathscr{H} $ gives   
\begin{align*} 0  \to    \mathrm{Ext}^{1}_{S} ( \ZZ_{p} ,  \mathscr{H} )  \xrightarrow { \pi ^{*} }     \mathrm{Ext}^{1}_{A 
} (   \ZZ_{p}  ,   \pi ^  { * } \mathscr{H}    )   &  \to \mathrm{Hom}_{S}(\ZZ_{p} ,  R^{1} \pi_{*} \pi ^{*}  \mathscr{H }     ) \\&  \to  \mathrm{Ext}_{S}^{2}(\ZZ_{p}, \mathscr{H})   \xrightarrow{ \pi^{*}}  \mathrm{Ext}_{A
}^{2}  (\ZZ_{p} , \pi^{*}  \mathscr{H} ) 
\end{align*}
The  maps $ \pi^{*} $ are  necessarily    injective as $ e^{*} \circ  \pi^{*} = ( \pi \circ e )^{*} = \mathrm{id}     $   and   therefore   the  morphism to the second line above  is    $ 0 $. By the projection formula, $ R^{1} \pi_{*}  \pi^{*}  \mathscr{H}  \simeq R^{1} \pi_{*}    \ZZ_{p}  \otimes \mathscr{H} $. Since $ R ^ {1 } \pi_{*} \ZZ_{p} \simeq \mathscr{H}om_{S}(\mathscr{H} , \ZZ_{p} )  =  :  \mathscr{H} ^ { \vee }   $ 
and   $ \mathscr{H}^{\vee}  \otimes   \mathscr{H}  \simeq \mathscr{H}om  _{S}       (\mathscr{H}, \mathscr{H} )$, we get   a split short  exact  sequence 
\begin{equation}   \label{exactsequencepoly}       0  \to  \mathrm{Ext}^{1} _ {S  } (    \ZZ_  { p }  ,      \mathscr{H} )   \to   \mathrm{Ext}^{1}_{A}  ( \ZZ_{p} ,  \pi^{*}   \mathscr{H}  )      \to  \mathrm{Hom}_{S}( \mathscr{H}  ,  \mathscr{H} )  \to  0 .   
\end{equation} 
with left splitting given by $ e ^ { * }    $.  Let $ \phi : \mathrm{Hom}_{S} ( \mathscr{H} , \mathscr{H})  \to  \mathrm{Ext}^{1}(\ZZ_{p} ,  \pi^{*}  \mathscr{H} ) $ denote the unique right splitting   satisfying    $ e^{*} \circ \phi  = 0  $.  
\begin{definition}  \label{firstlogarithm}    The \emph{first logarithm sheaf} is defined to be the pair $ ( \Log^{(1)}  _  {   \ZZ _ { p }  }     , \mathbf{1}^{(1)}    
)    $ where $ \Log^{(1)} _{\ZZ_{p}} =  \Log^{(1)}_{A, \ZZ_{p}}  \in  \mathrm{\acute{E}t}(A) _{ \ZZ_{p}}   $ is such that $ \phi(\mathrm{id}_{\mathscr{H}})  \in \mathrm{Ext}^{1}_{A}(\ZZ_{p}, \pi^{*} \mathscr{H}) $ is represented by 
\begin{equation}   \label{Logsequence}      0  \to  \pi^{*}  \mathscr{H}  \to  \mathcal{L} og ^ { ( 1 ) } _  { \ZZ_{p }  }     \xrightarrow { \delta }  \ZZ_{p}  \to  0
\end{equation}
and $ \mathbf{1}^{(1)} : \ZZ_{p} \to  e^{*}  \Log^{(1)}_{\ZZ_{p}}    $ is a  fixed right  splitting of the pullback of  (\ref{Logsequence}) under  identity\footnote{this pullback is necessarily split since $ e^{*} \circ \phi  = 0 $}. 
The pair $   \big   ( \Log_{\ZZ_{p}}^{(1)} , \mathbf{1}^{(1)}  \big   )  $ is then unique up to a unique  isomorphism.     We denote by $ \Log^{(1)}_{\QQ_{p}} $  the associated $ \QQ_{p} $-sheaf.
\end{definition} 
By  definition,    $ \Log^{(1)}  _ { \ZZ_ { p }    }       $  is  $S$-unipotent of length  one  
(see    \S \ref{unipotentsection}). One defines $ \Log^{(1)}_{\Lambda_{r}} $  for $ r \geq 1 $  in the same way as $ \mathscr{H}_{\Lambda_{r}} $. 
Then $ \Log ^ { (1  ) }  _ { \Lambda _ { r }  }  =  \Log  ^ { ( 1 )  }  _  { \ZZ_{p} }    \otimes  \Lambda_{r}  $ and $  \Log^{(1)}  _ { \ZZ_{p}}   =    (  \mathcal{L} og ^{(1)}  _ { \Lambda _{r} }  ) _{ r \geq  1 }  $. 
\begin{definition} For $ k \geq 1 $, the \emph{$ k $-th $ \QQ_{p}$-logarithm sheaf}  is the pair $  ( \Log^{(k)}_{\QQ_{p}} ,  \mathbf{1}^{(1)} ) $ where $  \Log^{(k)} _ { \QQ_  { p } }   : =    \mathrm{Sym}^{  k  }   \big (  \Log ^{(1)}  _ { \QQ_{p} }  \big )     \in   \mathrm{\acute{E}t}(A)_{\QQ_{p}}   $ and $$ \mathbf{1}^{(k)} : =  \frac{1}{k!}   \mathrm{Sym}^{k} ( \mathbf{1} ^{(1)}    )     :  \QQ_{p} \to   e^{*}      \Log   ^ {   (  k  )   }   _ {  \QQ_{p}  }      $$  is the  splitting map 
  induced by $ \mathbf{1}^{(1)} $ on the symmetric power.      
 \end{definition} 
The $ \QQ_{p} $-logarithms for $ k \geq 1 $ and their canonical splittings fit into an inverse system as follows.  Let $ \beta  :    =  \delta \oplus  \mathrm{id}  :  \Log_{\QQ_{p}}^{(1)}  \to  \QQ_{p}  \oplus  \Log ^{(1)} _  { \QQ_{p}} $ denote the diagonal map given by the sum of  the     projection $ \delta  $ in  (\ref{Logsequence}) and  identity. For 
$ k  \geq   2   $,   define   \emph{transition  maps}    $       u ^ { k  }     : \Log^{(k)}   _ {  \QQ_  { p }   }      \to     \Log^{(k-1)}   _   {  \QQ_ { p }   }          $ via     
\begin{align*} \Log^{(k)} _ { \QQ_{p}}     =   \mathrm{Sym} ^ {k }  (  \Log^{(1)} _ { \QQ_{p} }    )
&  \xrightarrow{ \mathrm{Sym}^{k}  \beta }    \mathrm{Sym}  ^ { k }  ( \QQ_{ p }  \oplus  \Log^{(1)}   _ { \QQ_{p}   }        )  \\ 
 & \simeq      \bigoplus   \nolimits  _ { i + j   = k }   \mathrm{Sym} ^ { i }   ( \QQ_{p} )   \otimes      \mathrm{Sym } ^ { j } (  \Log ^  {  ( 1 ) }   _  {   \QQ_{p } }      )  \\
&   \xrightarrow {}   \mathrm{Sym} ^ { 1  } ( \QQ_{p}  )  \otimes   \mathrm{Sym}^{k-1} ( \Log  ^ { (  1 ) }   _ { \QQ_{p} }  )     \simeq   \Log^{(k-1)}   _   {  \QQ_ { p }   } 
\end{align*}
For $ k \geq 2 $, we claim that $   (  e^{*}      u ^ { k }    )   \circ  \mathbf{1}^{(k)} : \QQ _ { p } \to   e ^ { * }   \mathcal{L} og ^ { ( k   -  1    ) } _ { \QQ_{p}}     $ is equal to $ \mathbf{1}^{( k-1)} $.   
First note that   by  definition of $ \mathbf{1}^{(1)} $, we have   $ (e^{*} \beta) \circ \mathbf{1}^{(1)}  =   \mathrm{id}   \oplus    \mathbf{1}^{(1)}   $   as  maps 
   $    \QQ_{p} \to \QQ_{p} \oplus   e ^ { * }        \Log_{\QQ_{p}}^{(1)}  $. Therefore 
   \begin{align*}    \mathrm{Sym} ^{k} (   e^{*}  \beta )  \circ 
   \mathbf{1}^{k}    = \frac{1}{k!} 
 \mathrm{Sym} ^ { ( k ) } (\mathrm{id}     \oplus  \mathbf{1}^{(1)}  )   &  \simeq    \bigoplus_{i+j=k}  \frac{1}{k!} \binom{k}{i}  \left ( \mathrm{Sym}^{i} (  \mathrm{id} )   \otimes   \mathrm{Sym} ^ {   j } ( \mathbf{1} ^{(1)}   ) \right ) \\ & =    \bigoplus _ { i + j = k }  \frac{  1   } { i ! } \left (  \mathrm{Sym}^{i}  (  \mathrm{id  }  )   \otimes \mathbf{1}^{(j) } \right ).
\end{align*}
The  projection of the last sum above to the summand at $ i = 1 $   is equal to  
$   \mathbf{1}^{(k-1)}  $.
Since $ u^{k} $ is
obtained  from $ \mathrm{Sym}^{k} ( \beta )  $ by  post-composition with projection to 
$ i = 1 $ summand as well, the claim follows.  If we  set 
  $    \Log^{(0)}_{\QQ_{p}}  : = \QQ_{p} $, $ \mathbf{1}^{(0)}   : \QQ_{p} \to  e^{*}  \Log^{(0)}_{\QQ_{p}}   $  the   identity and  $ u^{1} :  = \delta $ 
  (\ref{Logsequence}),       we still  
have   $   ( e^{*}   u^{1} )    \circ \mathbf{1}^{(1)} =  \mathbf{1}^{(0)} $. By  construction,  we have for each $ k \geq 1  $ an  exact sequence  \begin{equation}   0  \to   \pi ^ { * }  \mathrm{Sym}^{k}   \mathscr{H}   _ {  \QQ _ { p    }  }   \xrightarrow{   }   \Log^{(k)}  _  {  \QQ _ { p } }  \xrightarrow{u  ^ { k  }   }   \Log^{(k-1)}  _ {  \QQ _ { p } }   \to  0   \label{unipotence}       \end{equation} whose pullback under   $ e $  splits, giving an identification 
$ e^{*}  \Log^{(k)}_{\QQ_{p}} \simeq    \prod  _ { i = 0 } ^ { k  }  \mathrm{Sym}^{i}   \mathscr{H}_{\QQ_{p}} $ such that 
 $ e^{*} u^{k} $ is identified with the projection map $ \prod_{i=0}^{k}  \mathrm{Sym}^{i}  \mathscr{H}_{\QQ_{p}} \to  \prod_{i=0}^{k-1} \mathrm{Sym}^{i}   \mathscr{H}_{\QQ_{p}}  $.  
 One sees by induction  that $  \Log^{(k)}  _ { \QQ_{p}    }    $  is  $S$-unipotent of  length   $  k    $ with graded pieces given by symmetric powers of $ \mathscr{H}_{\QQ_{p}} $.

\begin{definition} The  \emph{$ \QQ_{p} $-logarithm  prosheaf} $ (\Log_{\QQ_{p}} , \mathbf{1}) $ is the  pro-system    $  (  \Log^{(k)}  _   { \QQ_ { p } }      , \mathbf{1} ^{(k)} )_{k \geq 0 }  $ of $\QQ_{p}$-sheaves  whose     transitions   maps     are given by   $ u ^ { k  }    $  for  $ k   \geq   1    $.      
 \end{definition} 
The logarithm prosheaf     satisfies several  
important  properties. Below we record  the   ones
 needed  later   on.          
\begin{proposition}   [Pullback Compatibility]
\label{basechangelog}     Suppose that $ f : T \to S $ is a morphism  in   $   \mathbf{Sch}   $,     $ A_{T} : =  A \times _{S}  T $ denotes  the pullback of  $  A $ to $ T $ and $ f_{A} : A_{T} \to A $ denotes   the natural map.     
Then there are canonical isomorphisms  $$  f_{A}    ^{*} \big ( \Log_{A,\QQ_{p} } ^ { ( k) }   \big  )      \simeq \Log_{A_{T},\QQ_{p}}^{(k)}   $$ for all $ k\geq 0 $  such that $  f_{A}  ^{*} ( \mathbf{1}^{(k)}_{A} ) $ is identified with the splitting $ \mathbf{1}^{(k)}_{A_{T}} $.      These  isomorphisms  commute  with  transition  maps.      
\end{proposition}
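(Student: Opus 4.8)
The plan is to reduce everything to the first logarithm sheaf $\Log^{(1)}$ and its canonical splitting, since the higher logarithms are built functorially out of it via symmetric powers and the transition maps $u^{k}$ are defined purely in terms of $\beta = \delta \oplus \mathrm{id}$. Thus it suffices to produce a canonical isomorphism $f_{A}^{*}\big(\Log^{(1)}_{A,\QQ_{p}}\big) \simeq \Log^{(1)}_{A_{T},\QQ_{p}}$ compatible with $\delta$ and with $\mathbf{1}^{(1)}$, and then apply $\mathrm{Sym}^{k}$; the compatibility with the $u^{k}$ will be automatic because $\mathrm{Sym}^{k}\beta$ and the projection to the $i=1$ summand are natural in the sheaf. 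So the whole statement collapses to the $k=1$ case.

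For $k=1$, first I would record the base-change compatibility of the Tate module sheaf itself: since $\mathscr{H}_{A,\ZZ_{p}} = R^{2d-1}\pi_{*}\ZZ_{p}(d)$ is lisse, smooth proper base change gives a canonical isomorphism $f^{*}\mathscr{H}_{A,\QQ_{p}} \simeq \mathscr{H}_{A_{T},\QQ_{p}}$ (and likewise $f^{*}\mathscr{H}^{\vee} \simeq \mathscr{H}_{A_{T}}^{\vee}$, compatibly with the pairing), whence $f^{*}$ carries $\mathrm{id}_{\mathscr{H}_{A}} \in \Hom_{S}(\mathscr{H}_{A},\mathscr{H}_{A})$ to $\mathrm{id}_{\mathscr{H}_{A_{T}}}$. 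Next, the functoriality of the Leray low-term exact sequence for $\mathscr{H}om(\ZZ_{p},-)\circ\pi_{*}$ under the base-change square $A_{T}\to A$, $T\to S$ shows that $f_{A}^{*}$ maps the split short exact sequence (\ref{exactsequencepoly}) for $A/S$ to the corresponding one for $A_{T}/T$, compatibly with the splitting $e^{*}$; applying $f_{A}^{*}$ to the extension class $\phi(\mathrm{id}_{\mathscr{H}_{A}})$ representing $\Log^{(1)}_{A,\QQ_{p}}$ therefore yields an extension of $\QQ_{p}$ by $\pi_{T}^{*}\mathscr{H}_{A_{T},\QQ_{p}}$ whose class is $\phi(f^{*}\mathrm{id}_{\mathscr{H}_{A}}) = \phi(\mathrm{id}_{\mathscr{H}_{A_{T}}})$, i.e. the class defining $\Log^{(1)}_{A_{T},\QQ_{p}}$. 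By the uniqueness clause in Definition \ref{firstlogarithm}, this pins down a unique isomorphism $f_{A}^{*}\Log^{(1)}_{A,\QQ_{p}} \simeq \Log^{(1)}_{A_{T},\QQ_{p}}$ intertwining $\delta$ with $\delta$, and the compatibility $f_{A}^{*}\mathbf{1}^{(1)}_{A} = \mathbf{1}^{(1)}_{A_{T}}$ follows because $\mathbf{1}^{(1)}$ is characterized by $e^{*}\circ\phi = 0$, a condition preserved under $f_{A}^{*}$ (using $e_{A_{T}} = f_{A}\circ e_{T}'$ where $e_{T}'$ covers $e_{S}$ through the base-change square).

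Finally I would pass from $k=1$ to general $k\geq 1$ by applying $\mathrm{Sym}^{k}$ (which commutes with $f_{A}^{*}$ for lisse, or more generally for any, sheaves since $f_{A}^{*}$ is a tensor functor) and dividing by $k!$ to match $\mathbf{1}^{(k)} = \frac{1}{k!}\mathrm{Sym}^{k}\mathbf{1}^{(1)}$; the case $k=0$ is trivial since $\Log^{(0)}_{\QQ_{p}} = \QQ_{p}$ and $f_{A}^{*}\QQ_{p} = \QQ_{p}$ canonically. Commutation with transition maps then follows by chasing the explicit definition of $u^{k}$: $f_{A}^{*}$ commutes with $\mathrm{Sym}^{k}$, with the binomial decomposition $\mathrm{Sym}^{k}(\QQ_{p}\oplus\Log^{(1)}) \simeq \bigoplus_{i+j=k}\mathrm{Sym}^{i}(\QQ_{p})\otimes\mathrm{Sym}^{j}(\Log^{(1)})$, and with the projection onto the $i=1$ summand, so it intertwines $u^{k}_{A}$ with $u^{k}_{A_{T}}$. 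The main obstacle, and the only point requiring genuine care, is verifying the functoriality of the Leray low-term sequence (\ref{exactsequencepoly}) under the non-Cartesian-looking but in fact Cartesian square $A_{T}\to A$ over $T\to S$ — one must check that the projection-formula and smooth-proper-base-change isomorphisms used to split off $\Hom(\mathscr{H},\mathscr{H})$ are themselves base-change compatible, which is standard but should be spelled out; everything else is formal manipulation of the six-functor formalism and the universal properties already established.
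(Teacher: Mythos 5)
The paper does not actually supply a proof of Proposition~\ref{basechangelog}: as indicated at the start of \S\ref{eisensteinsec}, the proofs in \S\S\ref{unipotentsection}--\ref{momentmapsec} (except \S\ref{distributioneisenstein}) are deferred to \cite{HuberKings} and \cite{Kings15}, so there is no in-text argument to compare against. Your outline is the standard one and is correct in substance: reduce to $k=1$; use smooth proper base change to identify $f^{*}\mathscr{H}_{A}\simeq\mathscr{H}_{A_{T}}$ and to see that the low-term Leray sequence (\ref{exactsequencepoly}), the projection-formula identification $R^{1}\pi_{*}\pi^{*}\mathscr{H}\simeq\mathscr{H}^{\vee}\otimes\mathscr{H}$, and the $e^{*}$-splitting are all compatible with $f_{A}^{*}$ (the last using $e_{A}\circ f = f_{A}\circ e_{T}$); deduce that $f_{A}^{*}$ carries the class $\phi_{A}(\mathrm{id}_{\mathscr{H}_{A}})$ to $\phi_{A_{T}}(\mathrm{id}_{\mathscr{H}_{A_{T}}})$; then apply $\mathrm{Sym}^{k}$. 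Compatibility with the transition maps $u^{k}$ is indeed automatic, since $\beta=\delta\oplus\mathrm{id}$, the binomial decomposition of $\mathrm{Sym}^{k}(\QQ_{p}\oplus\Log^{(1)})$, and the projection to the $i=1$ summand are all natural and $f_{A}^{*}$ is an exact tensor functor.

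One point is misstated: the splitting $\mathbf{1}^{(1)}$ is \emph{not} characterized by the condition $e^{*}\circ\phi=0$. That condition characterizes $\phi$ and guarantees that the restriction of (\ref{Logsequence}) along $e$ is split, but the splitting itself is a choice, unique only up to an element of $\mathrm{Hom}_{S}(\ZZ_{p},\mathscr{H})$. The correct way to close the argument is via the uniqueness-up-to-unique-isomorphism of the \emph{pair} $(\Log^{(1)},\mathbf{1}^{(1)})$ in Definition~\ref{firstlogarithm}: having shown that $(f_{A}^{*}\Log^{(1)}_{A},\,f_{A}^{*}\mathbf{1}^{(1)}_{A})$ is again a pair of the required type for $A_{T}/T$ (using $e_{T}^{*}f_{A}^{*}=f^{*}e_{A}^{*}$ to see that $f_{A}^{*}\mathbf{1}^{(1)}_{A}$ is a right splitting along $e_{T}$), one takes the unique isomorphism of pairs to $(\Log^{(1)}_{A_{T}},\mathbf{1}^{(1)}_{A_{T}})$. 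In your version, the compatibility of splittings is presented as a consequence of an isomorphism chosen beforehand, when in fact it is part of the specification of which isomorphism is being chosen; the conclusion is the same but the logical order should be reversed. With that adjustment, the proposal is a complete and correct sketch.
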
    

\begin{proposition}[Functoriality]   For   any 
 isogeny  $  \varphi  : A
 \to A '
 $ of abelian  schemes over $ S $,  there   are unique  isomorphisms   $$   \varphi_{\#} :   \mathcal{L}\mathrm{og}^{(k)}  _ {  A  
 ,  \QQ_{p}}   \to 
 \varphi^{!}  \Log ^{(k)} _ {  A ' 
 ,  \QQ_{p} }  \simeq  \varphi^{*}     \Log ^{(k)} _ { A  '    ,  \QQ_{p} }     $$   for all $ k \geq 0 $ such that $ \mathbf{1}^{(k)}_{A} $ is sent to $ \mathbf{1}^{(k)}_{A'}     $. These    isomorphisms commute with  transition maps and  their pullbacks under identity  induce $ \mathrm{Sym}^{k} \varphi_{*} $ on $k$-th graded pieces. 
     \label{functoriality}    
\end{proposition}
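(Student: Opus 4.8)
The plan is to construct the isomorphisms $\varphi_\#$ for $k=1$ first, then extend to all $k\geq 0$ by taking symmetric powers, and finally verify uniqueness and the claimed effect on graded pieces. Fix an isogeny $\varphi\colon A\to A'$ of abelian schemes over $S$ with unit sections $e,e'$. The key structural input is the characterization of $\Log^{(1)}$ as the extension in \eqref{Logsequence} classified by $\phi(\mathrm{id}_{\mathscr H})$, together with the splitting over $e$: I want to show the pullback $\varphi^{!}\Log^{(1)}_{A',\QQ_p}$ carries a canonical extension structure of the same type, and then invoke the rigidity (uniqueness up to unique isomorphism) from Definition \ref{firstlogarithm}.

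First I would record the identification $\varphi^{!}\simeq\varphi^{*}$ coming from Lemma \ref{unipotentshrieks}: the map $\varphi$ is finite étale (being an isogeny, with $p$ invertible on $S$), hence smooth of relative dimension $0$, so for any $S$-unipotent sheaf $\mathscr F$ on $A'$ we get $\varphi^{!}\mathscr F\simeq\varphi^{*}\mathscr F$ functorially. Since $\Log^{(k)}_{A',\QQ_p}$ is $S$-unipotent of length $k$, this applies. Next, using that $\varphi$ is an isogeny, the induced map on Tate modules $\varphi_{*}\colon\mathscr H_{A}\to\mathscr H_{A'}$ is an isomorphism of $\QQ_p$-sheaves on $S$ (this is where rationality is essential — integrally $\varphi_{*}$ has cokernel killed by $\deg\varphi$). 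Pulling back \eqref{Logsequence} for $A'$ along $\varphi$ and using $\pi'\circ\varphi=\pi$ together with $\varphi_{*}$, I obtain an extension of $\QQ_p$ by $\pi^{*}\mathscr H_{A,\QQ_p}$ on $A$; the content is to check this extension class equals $\phi(\mathrm{id}_{\mathscr H_A})$, i.e. that it maps to $\mathrm{id}$ under the third arrow of \eqref{exactsequencepoly} and is killed by $e^{*}$. The first point follows from the compatibility of the Leray/low-degree sequences under the finite étale map $\varphi$ and the fact that $\varphi_{*}$ is the chosen identification; the second from $e^{*}\varphi^{*}=(\varphi\circ e)^{*}=(e')^{*}$ applied to the canonical splitting $\mathbf 1^{(1)}_{A'}$, which simultaneously pins down the normalization $\varphi_{\#}(\mathbf 1^{(1)}_{A})=\mathbf 1^{(1)}_{A'}$. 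By the uniqueness clause in Definition \ref{firstlogarithm} there is then a unique isomorphism $\varphi_{\#}\colon\Log^{(1)}_{A,\QQ_p}\to\varphi^{*}\Log^{(1)}_{A',\QQ_p}$ respecting the marked splittings.

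For general $k$, set $\varphi_{\#}:=\mathrm{Sym}^{k}(\varphi_{\#}^{(1)})$ under the canonical isomorphism $\mathrm{Sym}^k\varphi^{*}\simeq\varphi^{*}\mathrm{Sym}^k$; since $\mathbf 1^{(k)}=\tfrac{1}{k!}\mathrm{Sym}^{k}\mathbf 1^{(1)}$ the normalization on splittings is immediate, and compatibility with the transition maps $u^k$ follows because $u^k$ was defined purely in terms of $\mathrm{Sym}^{\bullet}$ of the diagonal $\beta=\delta\oplus\mathrm{id}$, which is manifestly natural in the pair $(\Log^{(1)},\delta)$. Uniqueness of $\varphi_{\#}$ for each $k$ reduces to uniqueness at $k=1$: any two candidates differ by an $A$-automorphism of $\Log^{(k)}_{A,\QQ_p}$ fixing $\mathbf 1^{(k)}$, and unwinding the unipotent filtration (graded pieces $\mathrm{Sym}^{i}\mathscr H_{\QQ_p}$, with $\mathrm{Hom}$ and $\mathrm{Ext}^1$ controlled as in \eqref{exactsequencepoly}) shows such an automorphism is the identity, exactly as in the length-one case. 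Finally, the statement that $e^{*}\varphi_{\#}$ induces $\mathrm{Sym}^k\varphi_{*}$ on graded pieces is read off from the splitting $e^{*}\Log^{(k)}_{\QQ_p}\simeq\prod_{i=0}^{k}\mathrm{Sym}^i\mathscr H_{\QQ_p}$ described after \eqref{unipotence}, since $\varphi_{\#}^{(1)}$ induces $\varphi_{*}$ on the graded piece $\mathscr H$ in degree $1$ by the construction above, and $\mathrm{Sym}^k$ of this induces $\mathrm{Sym}^k\varphi_{*}$.

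The main obstacle I anticipate is the middle step: verifying that the pulled-back extension $\varphi^{*}\eqref{Logsequence}_{A'}$, after the identification $\varphi_{*}\colon\mathscr H_A\xrightarrow{\sim}\mathscr H_{A'}$, really represents $\phi(\mathrm{id}_{\mathscr H_A})$ and not some other splitting of \eqref{exactsequencepoly}. This is a naturality statement for the low-degree exact sequence of the Leray spectral sequence along the finite étale morphism $\varphi$, combined with base-change for $R^1\pi_{*}$ under $\varphi$; it is conceptually routine but must be done carefully because it is precisely what forces the normalization and hence the uniqueness. Everything else — the $\varphi^{!}\simeq\varphi^{*}$ comparison, the passage to symmetric powers, and compatibility with transition maps — is formal given the results already established.
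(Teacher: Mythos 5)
The paper itself does not carry out a proof of Proposition~\ref{functoriality}; in \S\ref{eisensteinsec} the author explicitly defers these verifications to the cited sources (Huber--Kings, Kings). Your strategy matches the standard one those references use: rigidly characterize $\Log^{(1)}$ by its extension class and the choice of splitting $\mathbf{1}^{(1)}$, check that $\varphi^{*}\Log^{(1)}_{A',\QQ_p}$ (with its $e^{*}$-splitting coming from $\mathbf{1}^{(1)}_{A'}$) satisfies the same characterization after identifying $\mathscr{H}_{A}$ with $\mathscr{H}_{A'}$ via the rational Tate-module isomorphism $\varphi_{*}$, invoke uniqueness, and then push everything through $\mathrm{Sym}^{k}$ to get the higher $k$ and the compatibility with $u^{k}$. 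That is the right proof, and your correct identification that rationality is essential to make $\varphi_{*}$ invertible is the point where the argument genuinely uses $\QQ_p$-coefficients.

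One inaccuracy in the opening step is worth flagging so it does not propagate. You assert that $\varphi$ is finite \'etale because $p$ is invertible on $S$, and deduce $\varphi^{!}\simeq\varphi^{*}$ from smoothness of $\varphi$ of relative dimension $0$. An isogeny over $S$ is always finite and flat, but it is \'etale only when $\deg\varphi$ is invertible on $S$, which is not assumed here (and cannot be: the surrounding text considers, e.g., multiplication-by-$c$, and only ever imposes coprimality conditions on $\deg\varphi$ when they are actually needed, as in Lemma~\ref{alphacnorm}). The conclusion $\varphi^{!}\simeq\varphi^{*}$ on $S$-unipotent sheaves is nevertheless correct, but it comes from Lemma~\ref{unipotentshrieks} applied to the two structure morphisms $\pi\colon A\to S$ and $\pi'\colon A'\to S$, both smooth of the \emph{same} relative dimension $d$, so that $d_{1}-d_{2}=0$ and the twist/shift vanishes; the lemma places no smoothness requirement on the $S$-morphism $\varphi$ itself. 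With that correction your argument is sound, and the remaining ``main obstacle'' you name (compatibility of the edge map in \eqref{exactsequencepoly} under $\varphi^{*}$, which forces the extension class of $\varphi^{*}\Log^{(1)}_{A'}$ to land on $\varphi_{*}$) is indeed a diagram chase through Leray base change that is routine but must be written out.
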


\begin{corollary}[Splitting principle]  Let $ \iota : D \to A      $ be a closed subscheme  contained in the kernel of an isogeny $ \varphi : A \to A ' $
and   $ \pi_{D} 
: D \to S $ denote  its    structure map. Then there exist isomorphisms  $$  \varrho_{D}^{k}  :  \iota  ^{*} \Log^{(k)} _ {A ,  \QQ_ { p } } 
\xrightarrow{   \sim } 
\prod   \nolimits            _ { i  =0 } ^ { k  }  \pi_{D}^{*} \mathrm{Sym} ^ { i } ( \mathscr{H}_{A    
,   \QQ_{p}} ) $$
for all $ k \geq 0 $  that   commute with transition maps and 
are independent of the isogeny $ \varphi $. 
\label{splittingprinciple}       
\end{corollary}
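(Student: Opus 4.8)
The plan is to deduce the Splitting Principle (Corollary \ref{splittingprinciple}) from the Functoriality statement (Proposition \ref{functoriality}) by exploiting that $D$ sits inside the kernel of $\varphi$. First I would observe that since $\iota : D \to A$ factors through $\ker\varphi$, the composite $\varphi \circ \iota : D \to A'$ equals $\pi_D$ followed by the unit section $e' : S \to A'$; in other words $\varphi \circ \iota = e' \circ \pi_D$. Pulling back the isogeny isomorphism $\varphi_{\#} : \Log^{(k)}_{A, \QQ_p} \xrightarrow{\sim} \varphi^{*} \Log^{(k)}_{A', \QQ_p}$ along $\iota$ therefore gives
\[
\iota^{*}\Log^{(k)}_{A,\QQ_p} \;\xrightarrow{\;\sim\;}\; \iota^{*}\varphi^{*}\Log^{(k)}_{A',\QQ_p} \;\simeq\; \pi_D^{*}(e')^{*}\Log^{(k)}_{A',\QQ_p}.
\]
Now I invoke the canonical splitting of the unit pullback established around (\ref{unipotence}), namely $(e')^{*}\Log^{(k)}_{A',\QQ_p} \simeq \prod_{i=0}^{k}\mathrm{Sym}^{i}\mathscr{H}_{A',\QQ_p}$, together with the identification $\mathscr{H}_{A',\QQ_p} \simeq \mathscr{H}_{A,\QQ_p}$ coming from the isogeny $\varphi$ (an isogeny induces an isomorphism on rational Tate modules). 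Composing these yields the desired isomorphism $\varrho_D^{k}$.

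Next I would check the two asserted compatibilities. Compatibility with transition maps is immediate: Proposition \ref{functoriality} tells us $\varphi_{\#}$ commutes with the $u^{k}$, Proposition \ref{basechangelog} (pullback compatibility) tells us $\iota^{*}$ and $\pi_D^{*}$ intertwine transition maps, and the splitting of $(e')^{*}\Log^{(k)}_{A',\QQ_p}$ was constructed precisely so that $(e')^{*}u^{k}$ becomes the truncation projection $\prod_{i=0}^{k} \to \prod_{i=0}^{k-1}$; chaining these gives the claim for $\varrho_D^{k}$.

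The substantive point — and the step I expect to be the main obstacle — is independence of the isogeny $\varphi$. A priori $\varrho_D^{k}$ is built from a choice of $\varphi$ killing $D$, so I must show that if $\varphi_1 : A \to A_1'$ and $\varphi_2 : A \to A_2'$ both kill $D$, the resulting trivializations agree. The clean way is to reduce to a common refinement: given any two such isogenies, there is a third isogeny $\psi : A \to A''$ killing $D$ and factoring through both (e.g.\ take $\psi$ with kernel containing $\ker\varphi_1 + \ker\varphi_2$, or use $\varphi_1$ composed with a further isogeny), so it suffices to compare $\varphi$ with $\psi = \lambda \circ \varphi$ for an isogeny $\lambda : A' \to A''$. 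For this comparison one uses the uniqueness clause in Proposition \ref{functoriality}: the isomorphisms $\varphi_{\#}$, $\psi_{\#}$, $\lambda_{\#}$ are the unique ones sending $\mathbf{1}^{(k)}$ to $\mathbf{1}^{(k)}$, hence $\psi_{\#} = \varphi^{*}(\lambda_{\#}) \circ \varphi_{\#}$ by uniqueness applied to the composite isogeny; moreover $\lambda_{\#}$ induces $\mathrm{Sym}^{k}\lambda_{*}$ on graded pieces and is compatible with unit pullbacks, and since $\varphi \circ \iota = e' \circ \pi_D$ the pullback $\iota^{*}\varphi^{*}(\lambda_{\#})$ becomes $\pi_D^{*}$ of the map on $(e')^{*}\Log^{(k)}_{A'}$ induced by $\lambda$, which under the splittings is exactly $\prod_{i=0}^{k}\mathrm{Sym}^{i}(\lambda_{*})$ — precisely the identification $\mathscr{H}_{A'} \simeq \mathscr{H}_{A''}$ compatibility that makes the two trivializations $\varrho_D^{k}$ literally the same map $\iota^{*}\Log^{(k)}_{A,\QQ_p} \to \prod_{i=0}^{k}\pi_D^{*}\mathrm{Sym}^{i}\mathscr{H}_{A,\QQ_p}$. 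One should also record that $\varrho_D^{k}$ does not depend on the auxiliary choices in the splitting of $e^{*}\Log^{(k)}$, which is handled by the uniqueness of $\mathbf{1}^{(k)}$ already built into the setup.
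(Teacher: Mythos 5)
Your proof is correct and takes essentially the same approach as the paper: both define $\varrho_D^k$ by hitting $\Log^{(k)}_A$ with $\varphi_\#$, pulling back to the unit section of $A'$ using $D\subset\ker\varphi$, splitting there, and then arguing independence of $\varphi$ by passing to a common refinement via the cocycle identity $(\lambda\circ\varphi)_\#=\varphi^*(\lambda_\#)\circ\varphi_\#$. The paper's only minor organizational differences are that it first treats the case $(\iota,D)=(t,S)$ and handles general $D$ by base change to $A_D=A\times_S D$, and that it routes the splitting back through $e^*\Log^{(k)}_A$ via $(e^*\varphi_\#)^{-1}$ rather than using the isogeny-induced identification $\mathscr{H}_{A',\QQ_p}\simeq\mathscr{H}_{A,\QQ_p}$ — these are equivalent by Proposition \ref{functoriality}.
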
   

\begin{proof}  Let $ e ' $ denote the identity section  of $ A ' $.   First assume that $ (\iota, D ) = (t, S) $ is a section of $ \ker \varphi $ over $ S $.
We define  $ \varrho^{k}_{t} $
as  the  composition    
$$  t ^{*} \mathcal{L}og^{(k)}_{A
, \QQ_{p}} \xrightarrow{t^{*} \varphi_{\#}} (e ') ^{*} \mathcal{L}og^{(k)}_{A',\QQ_{p}} 
 \xrightarrow {(e^{*} \varphi_{\#})^{-1}} e ^{*}  \mathcal{L}og_{A, \QQ_{p}} ^ {  (  k ) }     \simeq \prod\nolimits     _{i=0}^{k}  
   \mathrm{Sym}^{i} (   
 \mathscr{H}_{A , \QQ_{p}}  )   $$ where the last isomorphism is induced by (\ref{unipotence}) as above.   
If  $ \psi : A ' \to A '' $ 
 is any  isogeny over $ S $,
 the corresponding  isomorphism defined with 
 respect to $ \varphi ' :=  \psi  \circ  \varphi $ is easily seen to  coincide    with  $ \varrho^{k}_{t} $ using the cocycle condition  $ \varphi'_{\# } = ( \varphi^{*}  \psi )_{\#} \circ \varphi_{\#} $,   
  which holds  by 
 uniqueness  of   the  maps   involved.     As any two isogenies from $ A $ that annihilate $ t $ can be refined by a common isogeny (by quasi-compactness of $ S$),    $ \varrho^{k}_{t} $               does not depend on $ \varphi $.
 
 In  general,  let $ A_{D} := A \times _{S}  D  $  and  
$ t_{D}  : D \to A_{D} $ be the tautological  section obtained from $t $  by base change.  Then we define $ \varrho^{k}_{D} $ as $  \varrho^{k}_{t_{D}}$  after  identifying 
$    \iota^ { * }  \Log ^ { ( k ) } _ { A , \QQ_ { p } } \simeq  t _ { D } ^ { *  }   \mathcal{L}og_{A_{D}, \QQ_{p}}^{(k)}$ and $    \pi_{D}^{*} \mathscr{H}_{A, \QQ_{p}}    \simeq   
\mathscr{H}_{A_{D }  , \QQ_{p}}   $.                
\end{proof}

\begin{proposition}[Vanishing of cohomology]   \label{vanishingcohomology} There exist natural  isomorphisms $  R^{2d}   \pi_{*}     (  \Log^{(k)} _  { \QQ  _  { p }   }       )    \simeq \QQ_{p} ( - d )  $ for all $k  \geq 0 $ that commute with the transition maps.  For $  i = 0 , \ldots, 2d - 1 $,  the induced   maps $ R^{i}   u ^ { k }    :  R^{i }   \pi_{*}    \Log^{(k)} _ { \QQ_ { p }  }       \to     R ^ { i }  \pi_{*}   \Log ^ { ( k - 1 )  } _ { \QQ_{p} }       $ are zero for all $ k \geq 1 $.
In particular,    $$   
\varprojlim_{ k}   \mathrm{H}^{i}_{\et}  \big (A,   \Log_{\QQ_{p}} ^ {(k) }  (d)  \big   )
\simeq    \begin{cases}  0  &  \text{ if } i < 2d,  \\  \mathrm{H}^{0}_{\et} (S, \QQ_{p})   &  \text{ if }  i = 2d. 
\end{cases} $$  
\end{proposition}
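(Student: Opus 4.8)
The plan is to reduce everything to the structure of $\Log^{(k)}_{\QQ_p}$ as an $S$-unipotent sheaf with graded pieces the symmetric powers $\mathrm{Sym}^i \mathscr{H}_{\QQ_p}$ ($0 \le i \le k$), together with the well-known computation of the cohomology of an abelian scheme. First I would establish the top-degree isomorphism $R^{2d}\pi_*(\Log^{(k)}_{\QQ_p}) \simeq \QQ_p(-d)$. Using the exact sequences \eqref{unipotence}, one has $R^{2d}\pi_*(\pi^*\mathrm{Sym}^i\mathscr{H}_{\QQ_p}) \simeq \mathrm{Sym}^i\mathscr{H}_{\QQ_p} \otimes R^{2d}\pi_*\QQ_p \simeq \mathrm{Sym}^i\mathscr{H}_{\QQ_p}(-d)$ by the projection formula and the trace isomorphism $R^{2d}\pi_*\QQ_p \simeq \QQ_p(-d)$ for a smooth proper morphism of relative dimension $d$ with geometrically connected fibers. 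Since $R^{2d+1}\pi_* = 0$ (cohomological dimension), applying $R\pi_*$ to \eqref{unipotence} and inducting on $k$ shows $R^{2d}\pi_*(\Log^{(k)}_{\QQ_p})$ is a successive extension whose graded pieces are $\mathrm{Sym}^i\mathscr{H}_{\QQ_p}(-d)$ for $i = 0,\dots,k$; but the key point is that the connecting maps in the long exact sequence kill all but the $i=0$ piece, so in fact $R^{2d}\pi_*(\Log^{(k)}_{\QQ_p}) \simeq \QQ_p(-d)$, with the isomorphism compatible with the transition maps $u^k$ because these are induced by the projections in \eqref{unipotence} after applying $e^*$.

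Next I would treat the lower-degree vanishing of the transition maps. The claim is that $R^i u^k \colon R^i\pi_*\Log^{(k)}_{\QQ_p} \to R^i\pi_*\Log^{(k-1)}_{\QQ_p}$ is zero for $0 \le i \le 2d-1$ and all $k \ge 1$. The idea is to use the splitting of \eqref{unipotence} after pullback along the unit section $e$: the composite $e^* u^k$ is the projection $\prod_{i=0}^k\mathrm{Sym}^i\mathscr{H}_{\QQ_p} \to \prod_{i=0}^{k-1}\mathrm{Sym}^i\mathscr{H}_{\QQ_p}$, which is surjective. Since $\pi \circ e = \mathrm{id}_S$, the map $e^*$ on cohomology gives a retraction, and one analyzes the factorization of $R^iu^k$ through $R^i\pi_*(\pi^*\mathrm{Sym}^k\mathscr{H}_{\QQ_p}) \xrightarrow{\partial} R^{i+1}$-terms: more precisely, from \eqref{unipotence} the map $R^iu^k$ has image contained in the image of $R^i\pi_*(\pi^*\mathrm{Sym}^k\mathscr{H}_{\QQ_p})$-type boundary contributions, and the standard fact is that the "augmentation" part of $R^i\pi_*$ of a unipotent sheaf already appears in the graded pieces, so the transition map, which by construction strips off one layer via $\mathrm{Sym}^k\beta$, acts as zero on cohomology below the top degree. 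This is exactly the content of the original vanishing result of Huber–Kings / Kings for the logarithm sheaf, and I would invoke the filtration argument there: induct on $k$, using that $R^i\pi_*(\pi^*\mathrm{Sym}^k\mathscr{H}_{\QQ_p}) \simeq \mathrm{Sym}^k\mathscr{H}_{\QQ_p} \otimes \wedge^i\mathscr{H}^{\vee}_{\QQ_p}$-type terms, and that the connecting map in the pro-system is precisely the contraction that vanishes on the relevant graded components by a weight/Lefschetz argument on the fibers.

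Finally, the $\varprojlim$ statement is a formal consequence. For $i < 2d$: each transition map $R^i u^k$ is zero, so the pro-system $(R^i\pi_*\Log^{(k)}_{\QQ_p})_k$ is pro-zero, hence $\varprojlim_k \mathrm{H}^i_{\et}(A, \Log^{(k)}_{\QQ_p}(d)) = 0$; here I would be a little careful and note that since the transition maps themselves (not just in a Mittag-Leffler sense) vanish, the $\varprojlim$ over the Leray/derived-limit spectral sequence computing continuous étale cohomology also vanishes, using that a pro-system with zero transition maps has vanishing $\varprojlim$ and $\varprojlim^1$. For $i = 2d$: the isomorphisms $R^{2d}\pi_*(\Log^{(k)}_{\QQ_p}(d)) \simeq \QQ_p$ are compatible with transitions (which become the identity), so the pro-system is constant equal to $\QQ_p$, and combined with the vanishing of $R^i\pi_*$ transition maps for $i < 2d$ feeding the Leray spectral sequence for $\pi$, one gets $\varprojlim_k \mathrm{H}^{2d}_{\et}(A, \Log^{(k)}_{\QQ_p}(d)) \simeq \mathrm{H}^0_{\et}(S, \QQ_p)$. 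I expect the \textbf{main obstacle} to be the second step: proving cleanly that the transition maps vanish on $R^i\pi_*$ in all degrees below $2d$, since this requires controlling the connecting maps in the long exact sequences of \eqref{unipotence} across the whole tower and not merely knowing the associated graded — the honest argument goes through the explicit description of $R\pi_*$ of unipotent sheaves on an abelian scheme and a weight argument, which I would cite from \cite{HuberKings} and \cite{Kings15} rather than redo.
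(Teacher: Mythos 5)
The paper itself does not prove Proposition~\ref{vanishingcohomology}: the opening of \S\ref{eisensteinsec} states that, outside \S\ref{distributioneisenstein}, proofs are omitted because the material is recalled from \cite{HuberKings} and \cite{Kings15}. So there is no in-paper proof to compare against; the relevant comparison is to the argument in those sources, which your sketch accurately mirrors in structure. Your reduction of the top-degree statement to the unipotence filtration \eqref{unipotence}, projection formula, the identification $R^{i}\pi_{*}\QQ_{p}\simeq \wedge^{i}\mathscr{H}^{\vee}_{\QQ_{p}}$, and a Koszul-type exactness killing all graded pieces except $i=0$ is exactly the shape of the original proof, and your identification of the surjectivity of the boundary map $R^{2d-1}\pi_{*}\Log^{(k-1)}_{\QQ_{p}}\to R^{2d}\pi_{*}\pi^{*}\mathrm{Sym}^{k}\mathscr{H}_{\QQ_{p}}$ as the crux is correct. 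You are right, and candid, in flagging the vanishing of $R^{i}u^{k}$ for $i<2d$ as the step that genuinely requires the detailed Koszul/weight analysis on fibers, and deferring it to \cite{HuberKings}, \cite{Kings15} is what the paper itself implicitly does.

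One small point worth tightening if you were to write this out: in the final $\varprojlim$ step you pass from ``$R^{i}u^{k}$ is zero for $i<2d$'' to ``the pro-system $\big(\mathrm{H}^{i}_{\et}(A,\Log^{(k)}_{\QQ_{p}}(d))\big)_{k}$ has zero transition maps.'' The Leray spectral sequence only gives that the transition map is zero on the associated graded of the induced filtration on $\mathrm{H}^{i}_{\et}(A,\cdot)$; the map itself is thus filtration-nilpotent, so a sufficiently high composite of transition maps vanishes, which suffices for the pro-system to be pro-zero and for both $\varprojlim$ and $\varprojlim^{1}$ to vanish. Your conclusion is right, but it is not a single-step ``formal consequence'' of the transition maps on $R^{i}\pi_{*}$ being zero; it uses this bounded-filtration argument. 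For $i=2d$ your reasoning is correct: the only term with non-vanishing transitions is $\mathrm{H}^{0}\big(S,R^{2d}\pi_{*}\Log^{(k)}_{\QQ_{p}}(d)\big)=\mathrm{H}^{0}_{\et}(S,\QQ_{p})$ with identity transitions, hence the stated limit.
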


\subsection{Cohomology classes}   \label{LogQpressection}     Fix  $ c > 1 $  an integer invertible on $ S $ and let  $ D :  =
A[c] $ be the group of $ c $-torsion points of $ A $. Then $ D $ is a finite \'{e}tale group scheme over $ S $. Let   
$ U =  U_{D} : =  A   \setminus D      $ denote   the complement of  $  D $ in  $ A $ and consider the diagram.  
\begin{equation}   \label{geometricsituation}   
\begin{tikzcd}[sep = large]      U  \arrow[dr, "\pi_{U}"']    \arrow[r , "j_{D}" ]  &  A   \arrow[d, "\pi"]      &  \arrow[l,   swap,    "\iota_{D}"]  D    \arrow[dl, "  \pi_{D}    "   ]  \\
& S & 
\end{tikzcd}
\end{equation}
where $  j = j_{D} $ and $  \iota  =  \iota_{D} $ are natural inclusions and $ \pi_{D} :  = \pi \circ \iota $, $ \pi_{U} : =  \pi \circ j $
are the structure  maps.  For any $ \mathscr{F}   \in  \mathscr{D}(A)_{\QQ_{p}}    $,     we have a   distinguished   triangle $   R  \iota_{*}  \iota^{!}  \mathscr{F}  \to   \mathscr{F} \to  R j_{*} j^{*}   \mathscr{F}     \to  R\iota_{*} \iota^{!} \mathscr{F} [1]  \, \in  \mathscr{D}(A)_{\QQ_{p}}  $ known as the
 \emph{localization} triangle.        
Applying $ R \pi_{*}  =  R \pi_{!}    $ to the localization triangle  with $ \mathscr{F} = \Log^{  ( k )    }   _  { \QQ_ { p } } (d) $ for a fixed $ k $,    we get a distinguished triangle
\begin{equation} \label{localizationtriangle} R \pi_{D, * } \iota^{!}  \Log ^ {  (k )  } _{\QQ_{p}}(d)  \to  R \pi_{*}  \Log^{  (k )  }_{\QQ_{p}}(d)   \to  R \pi_{U , * } j^{*}    \Log^{(k) }_{\QQ_{p}}(d)   \to  R \pi_{D , *}   \iota^{!}      \Log^{  (k )   } _{\QQ_{p}}(d) [1]  \in   \mathscr{D}(S)_{\QQ_{p}}          
\end{equation}
Using Lemma  \ref{unipotentshrieks}   and the fact that $ \Log^{ (k)}  _ { \QQ_{p }  }     $ is $S$-unipotent for each $ k $, we see that $ \iota^{!} \Log_{\QQ_{p}}^{  (k)  }  (d)  =  \iota^{*}  \Log  ^ {  (k) } _ { \QQ_ { p  } } [-2d] $ and therefore $ R \pi_{D, * }  \iota^{!} \Log_{\QQ_{p}}^{ (k)  } (d) =  R \pi_{D,  *  }    \iota^{*}  \Log_{\QQ_{p}}^{  ( k )  }  [-2d] $,  etc.    
Applying $ \mathrm{Hom}_{\mathscr{D}(X)_{\QQ_{p}}}  ( \QQ_{p}     ,  - ) $ and using the adjunctions $ \pi^{*} \dashv  R \pi_{*}$, etc., we obtain  a long exact sequence (\cite[Theorem II.1.3]{WeissauerKiehl}) 
\begin{align}  \notag     \cdots   \to \mathrm{H}^{2d - 1 } _{ \et}  \big ( A , \Log^{(k)}_{\QQ_{p}}(d) \big )   &  \to  \mathrm{H}^{ 2d - 1 }_{\et}   \big  (  U  ,    \Log^{ ( k )     }_{U , \QQ_{p}    } (d)   \big  )  \\   \label{longexactlogsequence}
&   \xrightarrow {}  \mathrm{H}^{ 0 } _{\et}  \big ( D , \iota^{*}  \Log ^{(k)}_{\QQ_{p}}  \big ) \to   \mathrm{H}^{2d}_{\et}   \big (  A , \Log^{ ( k )  }  _ { \QQ_{p}   }     (d)   \big   ) \to  \cdots 
\end{align}    
where $ \Log_{U,\QQ_{p}}  ^ { ( k )  }  $ denotes the pullback of $ \Log_{\QQ_{p}}  ^ { ( k )  }     $ to the open subset $ U  $.  By construction, these sequences  commute with maps induced by transition maps from $ k +1 $ to $ k $.       Abusing  notation,   we   denote the inverse limit over $ k $ of each of  the groups appearing in (\ref{longexactlogsequence}) by removing the superscript $(k)$.
Taking inverse limit of (\ref{longexactlogsequence}) over all $ k $, we obtain  an  exact sequence \begin{equation}   \label{logresiduesequence}         0  \to   \mathrm{H}    ^{2d-1}_{\et}   (  U  , \Log_{U,  \QQ_{p}}  ^ { }  ( d )   )   
\to  
\mathrm{H}^{0}_{\et}   (D , \iota^{*}  \mathcal{L}og_{\QQ_{p}}
 )  
\to   \mathrm{H}^{0}_{\et}(S,  \QQ_{p} )  
\end{equation}   
by Proposition  \ref{vanishingcohomology} and left exactness of inverse   limit.    The middle map appearing   in   (\ref{logresiduesequence}) is denoted $ \mathrm{res}_{U} $ and referred to as the \emph{residue map}.  The rightmost map in   (\ref{logresiduesequence})  is  induced by the composition of  augmentation $ \pi_{D,*} \iota^{*} \mathcal{L}og^{(k)}  _ { \QQ_{p}}   \to \pi_{D,*} \iota^{*} \QQ_{p} $ followed by   counit  adjunction   $ \pi_{D,*} \iota^{*} \QQ_{p} =   \pi_{D, !} \pi_{D}^{!}   \QQ_{p} \to \QQ_{p} $.  Via the identification of  Corollary  \ref{splittingprinciple}, the restriction of   the     rightmost map above to the $k=0$ component is the trace morphism $$\varepsilon_{D} :  \mathrm{H}_{\et}^{0}(D, \QQ_{p}) \to \mathrm{H}^{0}_{\et} (S,\QQ_{p}) . $$
Let $ \QQ_{p} [D] ^{0}  = \ker \varepsilon_{D} $. 

\begin{definition} \label{poldefi} The  \emph{polylogarithm  class with residue  $ \alpha \in \QQ_{p}[D] ^{0}$}  is the unique   cohomology     class  $$  \prescript {}{\alpha}  
{\mathrm{pol}_{\QQ_{p} }}\in\mathrm{H}^{2d-1}_{\et}\big(U,\Log_ {U,\QQ_{p} }(d)\big) $$ 
such that $ \mathrm{res}_{U} \big  ( 
{}_{\alpha}  { \mathrm{pol} } _ { \QQ_   {p}}    \big  )  =  \alpha $. For $ k \geq 0 $, we define $  {}_{\alpha} \mathrm{pol} ^{k}     _ { \QQ_{p} }   \in  \mathrm{H}^{2d-1}_{\et} \big (U, \Log^{(k)} _{U, \QQ_{p}}  (d)    \big   ) $ to be the image of $ {}_{\alpha} \mathrm{pol} _ {\QQ_{p}}$.      
\end{definition}    

\begin{remark}   \label{extremark}
We    have a similarly  defined  exact sequence (of groups of inverse limits) 
$$  0 \to  \mathrm{Ext}^{2d-1}_{U} \big  (
\QQ_{p} ,   \mathcal{L} og ^{   } _{ U ,  \QQ_{p}} (d)  \big  )  \xrightarrow {}  \mathrm{Hom}_{D} \big (\QQ_{p},\iota^{*}\mathcal{L}og^{}_{\QQ_{p}} \big )
\to  \mathrm{Hom}_{S} ({ \QQ_{p}} , \QQ_{p} )    $$ and a
class in the Ext group corresponding to $ \alpha \in \QQ_{p}[D]^{0} $. This is the perspective taken in
 \cite[\S 5.2]{HuberKings}.
 \end{remark}

Fix now an integer $ N >  1 $ that is invertible on $  S $ and such  that    $  (N , c ) =  1 $.  Let $ t : S \to A  $  be a non-zero $ N $-torsion section.  Then $ t $ factors via $ U $ as $ (N,c) = 1 $ and we will also denote $ t : S \to U $.         The unit  adjunction $  \mathrm{ id }  \to  R t _{ * } t ^{ * }  $ on $ U $ gives a  morphism \begin{align*} R \pi_{U , * }  \mathcal{L}og^{(k)}_{U,\QQ_{p}}(d) & \to  R\pi_{U, *}   R  t_ { * }  t ^ { * }  \mathcal{L}og  ^ {(k)}_{U,\QQ_{p}}(d)  \xrightarrow {\sim} R(\pi_{U} \circ t) _{*}t^{*} \mathcal{L}og^{(k)}_ {U,\QQ_{p}}(d) = t^{*}\mathcal{L}og^ {(k)}_ {U,\QQ_{p}}(d) 
\end{align*}
which in  turn  induces   the pullback 
\begin{equation}   \label{torsionpullback}     t ^ { * }  :     \mathrm{H} _ { \et } ^ {2d - 1} \big(U , \mathcal{L}og  ^ { ( k )  
}  _ {  U  ,     \QQ_    { p } }   (  d  )     \big       )   \to   \mathrm{H} ^ { 2d  - 1 } _ { \et  }   \big  (  S  ,    t ^ {* }  \mathcal{L} og  ^ { ( k ) }  _ {  U  ,     \QQ _ { p }  }   
 ( d)   \big )   \end{equation}     on cohomology.   By  Corollary
 \ref{splittingprinciple}, we have a map
\begin{equation}  
\mathrm{H} _ { \et } ^ { 2 d - 1 }   \big  (  S ,     t  ^  { *  }     \mathcal{L}og^{(k)}_{U,\QQ_{p}}(d) \big )   \xrightarrow [\sim ] { \varrho_{t}  ^{k}  } \mathrm{H}^{2d-1} _ { \et }   \big    ( S   ,   \prod   \nolimits _ {   i  =     0 
} ^{k}   \mathrm{Sym}^{i} (  \mathscr{H}_{\QQ_{p}}   )  ( d )   \big  )         \xrightarrow{\pr^{k}} 
\mathrm{H}^{2d-1} _ { \et }   \big    ( S  ,  \mathrm{Sym}^{k} ( \mathscr{H}_{\QQ_{p}} ) ( d )   \big  )   
\label{polylogtoeisenstein}    
\end{equation} 
where $ \pr^{k} $ is the projection on the $ k $-th symmetric power. 
\begin{definition}   \label{EisdefiQp}  Let $ \alpha \in \QQ_{p} [ D ]^{0} $, $ t  : S \to  U $ be a  non-zero  
  $ N $-torsion section as above and $ k \geq 0 $ be an integer. The $ k $-th \emph{rational   Eisenstein class}  $$ \prescript{}{\alpha}{  \mathrm{Eis} }^{k}  _ { \QQ_{p}}  ( t )   \in   \mathrm{H}^{2d-1} _ { \et }   \big    ( S  ,  \mathrm{Sym}^{k}   ( \mathscr{H}_{\QQ_{p}}   ) ( d )   \big  ) $$
\emph{along $ t $ with residue $ \alpha $}    is  the  image of  $ \prescript{}{\alpha}{\mathrm{pol}}_{\QQ_{p}}  ^{  k  } 
\in  \mathrm{H}^{2d-1}_{\et}\big (U, \Log_{U, \QQ_{p}} ^ { ( k  ) }  (d) \big ) $ under the composition  $ \pr^{k}  \circ   \varrho_{t} ^ { k } \circ  t^{*} $ of     (\ref{polylogtoeisenstein}) and  (\ref{torsionpullback}).     
 \end{definition}

There is a   special   choice  of  $ \alpha $   which       enjoys certain  compatibility   properties  and  which    will be used in \S \ref{parametrization}.         
 Let $ \pi_{D}^{*} :   \mathrm{  H  }    ^{0} ( S , \QQ_{p} ) 
\to  \mathrm{H}^{0}_{\et}( D, \QQ_{p}) $ denote the pullback map  and $   e_{*} :   \mathrm{H}^{0}_{\et}(S, \QQ_{p}) \to  \mathrm{H}^{0}_{\et}(D   , \QQ_{p})   $ the 
map  given by counit  adjunction  
$  \QQ_{p}  =  \pi_{D,  * }  e_{!} e^{!}   \QQ_  { p  }   \to   \pi_{D, * }  \QQ_{p}   $. Let   
\begin{equation}   \label{alphacchoice}   \alpha_{c}  =  \alpha_{A, c }  :  =   c ^  { 2 d }          e_{*} ( 1  ) - \pi_{D}^{*}(1   )   
\end{equation} 
where $ 1 \in  \mathrm{H}   ^{0}_{\et}(S, \QQ_{p}) $ denotes    the global section given  locally  by   $ 1 \in \QQ_{p} $.  Then 
$  \alpha_{c}  \in   \QQ_{p}  [ D    ]      ^ { 0  }     $  since   $ \pi_{D, * } ( \alpha_{c} ) =   c^{2d} ( \pi_{D} \circ  e)_{*}  (1)  -  \pi_{D, *} \pi_{D}^{*} (1) =  c^{2d}  - c^{2d} = 0   $.    
\begin{definition}   \label{cEisclass}      
We   denote by  $ {}_{c} \mathrm{pol}_{\QQ_{p}} $ (resp.,    $ {} _ { c} \mathrm{Eis}^{k}(t)_{\QQ_{p}} $)     the  polylogarithm (resp.,   Eisenstein)  class  with  residue $ \alpha_{c} $. \end{definition}  
\begin{remark}   See   \cite[\S 4]{Kings14} 
for  a precise relationship between these classes   in  the  elliptic  case    and Kato's Siegel units.  The general construction of the polylogarithm has its origins in the work of Beilinson and Levin \cite{BeilinsonLevin}, whose ideas were later placed in a much broader framework by Wildeshaus \cite{Wildeshaus} and  Kings \cite{KingsKtheory}, \cite{KingsNote}.  When $ S $ is the Siegel modular variety of genus two of a suitable level, Faltings  \cite{FaltingsEisenstein}  has also  constructed a ``potentially motivic" Eisenstein class in $\mathrm{H}^{3}_{\et}(S,  \QQ_{p}(3)) $, whereas the weight zero construction above yields a class in $ \mathrm{H}^{3}_{\et}(S,  \QQ_{p}(2))$.   This other class has recently been  used to construct a new Euler system in \cite{SkinnerVincentelli}.  
\end{remark}    

\subsection{Norm   compatibility}   \label{normcompsec}     We  maintain the notations of \S  \ref{LogQpressection}.   
Suppose now that   $ \pi ' :  A ' \to S $ 
is   another abelian scheme   with  unit  section $  e ' : S \to A ' $ and define $ D' $, $ U' $, etc., analogously  as in   diagram      (\ref{geometricsituation}).   For 
 notational clarity, we will  denote   $ \mathscr{F} =      
\mathcal{L}\mathrm{og}^{(k)}  _ {  A ,  \QQ_{p}} ( d )    $ and    $ \mathscr{G} =     
\mathcal{L}\mathrm{og}^{(k)}  _ {  A'  ,  \QQ_{p}} (d)   $  in this subsection.     

Let $ \varphi  : A \to A ' $ be a $ S$-isogeny and  
$ \varphi_{D}  : D \to D ' $ denote its restriction to $ D $. Set $ \tilde{D} : = \varphi^{-1}(D') $, $ \tilde{U} : =  \varphi^{-1}( U') \subset U  $ and denote by $   j_{\tilde{D}}  : \tilde{U}  
\to A $, 
$ \jmath :   \tilde{U}  \to     U $ the  inclusion maps.   The  unit  adjunction $  \mathrm{id}  \to  R \jmath _{*}   \jmath^{*}  $ gives a  restriction transformation $  {r} _{U, \tilde{U}} :  R j _{D ,* }  j_{D } ^{* }  \to  R   
 j_{D,*} R \jmath_{*} \jmath^{*}    j_{D}^{*}  \xrightarrow{\sim}      Rj_{\tilde{D},*}  j_{\tilde{D} }^{*} $.  Since $ \tilde{\varphi} : = \varphi_{\mid  \tilde{U}} :  \tilde{U} \to U  '    $ is the pullback of $ \varphi $  along $ j_{D'} $, we have  $ \tilde{  \varphi } ^{!} j_{D'}^{*}  \simeq   j_{\tilde{D}}^{*} \varphi^{!}  $ and  a base change isomorphism $ 
 \varphi^{!} R j_{D',*} 
  \xrightarrow{ \sim }  R  j_{\tilde{D},*}  \tilde {     \varphi}^{! }  $. 
Define
\begin{align*}  
\varphi^{\natural} : Rj_{D,*}  j_{D}^{*}   \mathscr{F}  
\xrightarrow{   \varphi_{\#}        }   
Rj_{D,*}  j_{D}^{*}   \varphi^{!}   \mathscr{G} 
  \xrightarrow{  r _{  U ,    \tilde{U}  }}    
 R j_{\tilde{D},*}  j_{\tilde{D}}^{*}  \varphi^{!}   \mathscr{G}  \xrightarrow{\sim}   R j_{\tilde{D},*}      \tilde{   \varphi}^{!} j_{ D   ' 
   }^{*}     
 \mathscr{G}   \xrightarrow{ \sim }   \varphi^{!}   R j _  { D ' ,
 * }   j _  {D   '    } ^  {  * }   \mathscr{G}  . 
\end{align*}
By \cite[Lemma 5.1.2]{HuberKings},   
there is a morphism of distinguished triangles in $  \mathscr{D}(A') _  { \QQ_{p}}  $     
\begin{center}   
\begin{tikzcd}  R \varphi_{*}  R\iota_{D,*}  \iota_{D}^{!}  
\mathscr{ F}     
\arrow[r]      \arrow[d]  & R \varphi_{*}    
\mathscr{F}  
\arrow[r]   \arrow[d]   &  R \varphi_{*}   R j_{D,*} j_{D}^{*}   
\mathscr{F}  
\arrow  [r ]  \arrow[d, "\varphi_{\natural}"] & 
     R \varphi_{*}  R  \iota_{D,*} \iota_{D}^{!}    
\mathscr{F}
[1]   \arrow[d]    \\
 R \iota_{D',*} \iota_{D'}^{!}   \mathscr{G}   \arrow[r]      &   \mathscr{G}   \arrow[r ]     &   R   j_{D', *} j_{D'}^{*}  \mathscr{G}   \arrow[r]   &      R\iota_{D',*} \iota_{D'}^{!}   \mathscr{G}  [ 1 ]    
\end{tikzcd}
\end{center}    
where   $ \varphi_{ \natural}  :  R  \varphi_{*}   Rj_{D,*}  j_{D}^{*}   \mathscr{F}   \to    R j _  { D ' ,  * }   j _  {D   '  } ^  {  * }   \mathscr{G}    $ denotes the mate of $ \varphi^{\natural} $  under the adjunction $ R  \varphi_{*} = R \varphi_{!}      \dashv  \varphi^{!} $. Applying $ R \pi'_{!} =  R \pi'_{*}   $,   we   obtain a diagram in  $ \mathscr{D}(S)_{\QQ_{p}} $ where the top row  is  (\ref{localizationtriangle}) and the bottom row is the version  defined for $ A '  $.   Repeating the steps of \S \ref{LogQpressection}, we obtain a \emph{norm map} $ \mathscr{N}_{\varphi}  $    from the short exact sequence  (\ref{logresiduesequence})  for $ A $ to that for $ A ' $.  On $ \mathrm{H}^{0}_{\et}(S, \QQ_{p}) $, $ \mathscr{N}_{\varphi} $  is just identity while the map $ \mathrm{H}^{0}_{\et}(D, \QQ_{p}) \to \mathrm{H}^{0}_{\et}(D', \QQ_{p}) $ is the trace $ \varphi_{D,*} $.  By uniqueness of polylogarithms   with  respect  to 
residues    and compatibility of adjunction morphisms, we obtain the following. 

\begin{proposition}[Norm  Compatibility]      \label{poltrace}     $  \mathscr{N}_{\varphi}  
(   { }  _ {  \alpha }   \mathrm{pol} _ {A , \QQ_{p}}) = {}_{\beta}   \mathrm{pol} _ { A' ,  \QQ_  { p }  } $ where  $  \beta =  \varphi_{D, *} (\alpha) \in \QQ_{p}[D  '    ]^{0} $. 
\end{proposition}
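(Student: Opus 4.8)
The plan is to reduce the identity to a residue computation, exploiting that by Definition~\ref{poldefi} a polylogarithm class is uniquely determined by its residue. First I would record precisely what the construction preceding the statement produces: after applying $R\pi'_{*}=R\pi'_{!}$ to the morphism of localization triangles built from $\varphi^{\natural}$ and repeating the steps of \S\ref{LogQpressection}, one obtains a morphism from the exact sequence (\ref{logresiduesequence}) for $A$ to the one for $A'$, and in particular a commutative square
\begin{center}
\begin{tikzcd}[sep = large]
\mathrm{H}^{2d-1}_{\et}\big(U,\Log_{U,\QQ_{p}}(d)\big) \arrow[r,"\mathrm{res}_{U}"] \arrow[d,"\mathscr{N}_{\varphi}"'] & \mathrm{H}^{0}_{\et}\big(D,\iota_{D}^{*}\Log_{\QQ_{p}}\big) \arrow[d] \\
\mathrm{H}^{2d-1}_{\et}\big(U',\Log_{U',\QQ_{p}}(d)\big) \arrow[r,"\mathrm{res}_{U'}"] & \mathrm{H}^{0}_{\et}\big(D',\iota_{D'}^{*}\Log_{\QQ_{p}}\big)
\end{tikzcd}
\end{center}
in which, under the splitting-principle identifications of Corollary~\ref{splittingprinciple}, the right-hand vertical arrow preserves the $\mathrm{Sym}^{0}$-summand and acts there as the trace $\varphi_{D,*}\colon\mathrm{H}^{0}_{\et}(D,\QQ_{p})\to\mathrm{H}^{0}_{\et}(D',\QQ_{p})$ --- this is exactly the description recorded just above the statement.

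Granting this square, the proof is a one-line chase. Applying $\mathrm{res}_{U'}$ to $\mathscr{N}_{\varphi}\big({}_{\alpha}\mathrm{pol}_{A,\QQ_{p}}\big)$ and commuting past $\mathscr{N}_{\varphi}$ yields $\varphi_{D,*}\big(\mathrm{res}_{U}({}_{\alpha}\mathrm{pol}_{A,\QQ_{p}})\big)=\varphi_{D,*}(\alpha)=\beta$. It remains to see that $\beta$ is a legitimate residue, i.e.\ that $\beta\in\QQ_{p}[D']^{0}$: since $\pi_{D}=\pi_{D'}\circ\varphi_{D}$, functoriality of traces gives $\varepsilon_{D'}\circ\varphi_{D,*}=\varepsilon_{D}$, whence $\varepsilon_{D'}(\beta)=\varepsilon_{D}(\alpha)=0$. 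Thus $\mathscr{N}_{\varphi}\big({}_{\alpha}\mathrm{pol}_{A,\QQ_{p}}\big)\in\mathrm{H}^{2d-1}_{\et}\big(U',\Log_{U',\QQ_{p}}(d)\big)$ has residue $\beta\in\QQ_{p}[D']^{0}$, and by the uniqueness clause of Definition~\ref{poldefi} it must coincide with ${}_{\beta}\mathrm{pol}_{A',\QQ_{p}}$.

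The only step with genuine content --- and the one the discussion preceding the statement is really carrying out --- is the assertion about the square: that $\mathscr{N}_{\varphi}$ induces $\varphi_{D,*}$ on the $\mathrm{Sym}^{0}$-piece of the residue. Verifying this means chasing the mate $\varphi_{\natural}$ of $\varphi^{\natural}$ through the base-change isomorphism $\varphi^{!}Rj_{D',*}\xrightarrow{\sim}Rj_{\tilde{D},*}\tilde{\varphi}^{!}$ and the identity $\tilde{\varphi}^{!}j_{D'}^{*}\simeq j_{\tilde{D}}^{*}\varphi^{!}$, and comparing the augmentation-then-counit composite that defines the rightmost map of (\ref{logresiduesequence}) for $A'$ with the one for $A$, intertwined by $\varphi_{\#}$. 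The decisive inputs are from Proposition~\ref{functoriality}: $\varphi_{\#}$ carries $\mathbf{1}^{(k)}_{A}$ to $\mathbf{1}^{(k)}_{A'}$ and induces $\mathrm{Sym}^{k}\varphi_{*}$ on $k$-th graded pieces, so it respects the direct-sum decomposition of Corollary~\ref{splittingprinciple} and on the $\mathrm{Sym}^{0}$-graded piece is the identity, collapsing the residue map to $\varphi_{D,*}$; one must also check this is compatible with the transition maps $u^{k}$ so that it survives $\varprojlim_{k}$, which follows from Propositions~\ref{functoriality} and~\ref{vanishingcohomology}. Once this bookkeeping with adjunctions is in place the proposition is formal, and I expect no further work beyond it.
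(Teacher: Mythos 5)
Your proof is correct and is essentially the paper's own argument: the paper builds $\mathscr{N}_{\varphi}$ precisely as a morphism from the residue exact sequence (\ref{logresiduesequence}) for $A$ to that for $A'$, records that it is the identity on $\mathrm{H}^{0}_{\et}(S,\QQ_{p})$ and the trace $\varphi_{D,*}$ on $\mathrm{H}^{0}_{\et}(D,\QQ_{p})$, and then concludes by the uniqueness clause in Definition~\ref{poldefi} --- exactly the residue chase you carry out, including the check that $\beta$ remains in $\QQ_{p}[D']^{0}$.
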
   
We will   also     need the following result.        
\begin{lemma}   \label{alphacnorm}                     If $   \varphi $ has constant  degree and $ (\deg  \varphi , c ) = 1   $,   $ \varphi_{D,*}  ( \alpha_{c} )  
=  \alpha_{A', c} $.
\end{lemma}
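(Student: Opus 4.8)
The plan is to deduce the identity $\varphi_{D,*}(\alpha_{A,c}) = \alpha_{A',c}$ from the single observation that the hypotheses force $\varphi_{D}\colon D = A[c] \to D' = A'[c]$ to be an \emph{isomorphism}; granting this, the equality is a formal manipulation of the trace and pullback maps built into the definition of $\alpha_{c}$.

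\textbf{Step 1: $\varphi_{D}$ is an isomorphism.} Since $\varphi$ is an isogeny, $\dim A = \dim A' = d$, so $D$ and $D'$ are finite étale over $S$ of constant rank $c^{2d}$ (using that $c$ is invertible on $S$). The kernel of the homomorphism $\varphi_{D}$ is $\ker\varphi \cap A[c]$; on each geometric fibre this is a subgroup of $(\ZZ/c\ZZ)^{2d}$ whose order divides $\deg\varphi$, hence is trivial by $(\deg\varphi, c) = 1$, and since $A[c]$ is finite étale this makes $\varphi_{D}$ a monomorphism of $S$-schemes. Being a morphism between schemes étale over $S$, $\varphi_{D}$ is étale, hence an open immersion; being a morphism between finite $S$-schemes, it is also a closed immersion. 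Thus $\varphi_{D}$ identifies $D$ with a clopen $S$-subscheme of $D'$, and as $D$ and $D'$ have the same rank $c^{2d}$ over $S$, the complementary clopen subscheme is finite étale of rank $0$, i.e.\ empty; so $\varphi_{D}$ is an isomorphism. This is the step that genuinely uses the hypotheses; I expect it to be the main (if modest) difficulty, the rest being bookkeeping.

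\textbf{Step 2: the computation.} Recall $\alpha_{A,c} = c^{2d} e_{*}(1) - \pi_{D}^{*}(1)$. The unit section $e$ of $A$ factors through $D$ and is carried by $\varphi$ to the unit section $e'$ of $A'$, which factors through $D'$; hence $\varphi_{D}\circ e = e'$ as sections of $\pi_{D'}$, and functoriality of the counit-adjunction maps defining $e_{*}$ and $e'_{*}$ yields $\varphi_{D,*}(e_{*}(1)) = e'_{*}(1)$. For the second term, $\pi_{D'}\circ\varphi_{D} = \pi_{D}$ gives $\pi_{D}^{*} = \varphi_{D}^{*}\circ\pi_{D'}^{*}$, and since $\varphi_{D}$ is an isomorphism we have $\varphi_{D,*}\circ\varphi_{D}^{*} = \mathrm{id}$, so $\varphi_{D,*}(\pi_{D}^{*}(1)) = \pi_{D'}^{*}(1)$. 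Combining, $\varphi_{D,*}(\alpha_{A,c}) = c^{2d} e'_{*}(1) - \pi_{D'}^{*}(1) = \alpha_{A',c}$, which lies in $\QQ_{p}[D']^{0}$ as it should. (The hypothesis that $\deg\varphi$ is \emph{constant} is exactly what makes the fibrewise order estimate in Step 1 valid uniformly, so that the argument goes through over all of $S$ at once.)
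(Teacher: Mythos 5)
Your proof is correct and rests on the same core observation as the paper's: that $\varphi_D\colon D\to D'$ is an isomorphism, combined with $\varphi_D\circ e = e'$. The paper handles the second term $\varphi_{D,*}\bigl(\pi_D^*(1)\bigr)$ by first passing to a finite étale cover on which $D$, $D'$ become constant on $\Gamma=(\ZZ/c\ZZ)^{2d}$, then noting that $\varphi_D$ is an automorphism of $\Gamma$ which merely permutes components and so fixes the constant tuple $(1)_{\gamma\in\Gamma}$; this étale-local trivialization is implicit about \emph{why} $\varphi_D$ restricts to an automorphism. You instead establish the isomorphism globally at the level of $S$-schemes (your Step 1, using the triviality of $\ker\varphi\cap A[c]$ on geometric fibres plus a rank count over $S$), and then close the argument with the clean adjunction identities $\pi_D^*=\varphi_D^*\circ\pi_{D'}^*$ and $\varphi_{D,*}\circ\varphi_D^*=\mathrm{id}$. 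This buys a proof that avoids the reduction to constant group schemes and fills in the justification the paper omits; the paper's version buys a shorter local computation at the cost of leaving the key fact stated rather than proved. Both are valid; yours is the more complete version of the same underlying idea.
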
 

\begin{proof}
Since $ \varphi_{D,*} \circ  e_{*} (1) = (\varphi_{D} \circ e) _{*}(1) = e _{*}' (1)  $, we only need to show that  $  \varphi_{D,*} \circ \pi_{D}^{*}(1) = \pi_{D' } ^ {*  } ( 1) $. This can be established    \'{e}tale locally,  i.e.,    over  a finite \'{e}tale cover of $ S $ where $ D, D' $ become  constant group schemes on $ \Gamma   :     =  ( \ZZ /  c \ZZ )^{2d} $. In this case, $ \varphi_{D} : D \to D' $ is determined by an automorphism of $ \Gamma $ and $ \varphi_{D,*} $ identifies with the endomorphism on  $ \oplus_{\gamma} \mathrm{H}^{0}_{\et}(S, \QQ_{p}  ) $ given by identity maps between the permuted  components determined by the automorphism of $ \Gamma $.  As $  (1)_{\gamma \in \Gamma } $ is clearly preserved by such maps, the  claim  follows.    
\end{proof}

\subsection{Distribution  relations}   \label{distributioneisenstein} 
Fix  $    \alpha $, $ t $ and $ k $   as in Definition  \ref{EisdefiQp}  for all of this subsection.  For the next result,  we let  $ f : T \to S $ denote   a   fixed     morphism  in  $  \mathbf{Sch}  $.  Set    $ A_{T} : = A \times_{S}  T $,  $ D_{T} : = A_{T}[c] $ and $U_{T} : = A_{T} \setminus D_{T} $. We denote by $ f_{A} : A_{T} \to A $ the natural map and  by $ f_{D} : D_{T} \to D  $, $ f_{U} : U_{T} \to U  $ the restriction of $ f_{A} $ to $ D_{T} $, $ U_{T} $ respectively.  
Let $ e_{T} :  T \to A_{T} $ denote  the   identity  section  and  $  t_{T} :  T     \to U_{T} $ the tautological section induced by base changing    $ t $.   
\begin{equation}   \label{abelianschemepullback}     
\begin{tikzcd} [sep = large]      U_ {T }  \arrow[d,  "{\pi_{U_{T}}}", swap]   \arrow[r, "f_{U}"] &  U  \arrow[d, "\pi_{U}"']  \\
T   \arrow[u, swap,  dotted  , bend right, "t_{T}"] \arrow[r, "f"]  &  S    \arrow[u, bend right, swap, "t"]   
\end{tikzcd}  
\end{equation}    
Denote by
$$f^{*}_{k} : \mathrm{H}_{\et}^{2d-1} \big (  S , \mathrm{Sym}^{k}  (  \mathscr{H}_{A , \QQ_{p}} ) 
   ( d )  \big  )  \to   \mathrm{H}_{\et}^{2d-1} \big ( T , \mathrm{Sym}^{k} ( \mathscr{H}_{A_{T} , \QQ_{p}} )  (d)  \big ) , \quad  \quad   \quad 
f^{*}_{D} :   \mathrm{H}^{0}_{\et} ( D , \QQ_{p} )   \to    \mathrm{H}^{0}_{\et}  ( D_{T} , \QQ_{p} )   $$
the pullback maps induced by the   unit     adjunction $ \mathrm{id} \to R f_{*} f^{*} $, etc.  More generally, this  adjunction induces 
(via base change applied to (\ref{abelianschemepullback}) and Proposition \ref{basechangelog}) a morphism from the triangle  (\ref{localizationtriangle}) to $ R f_{*} $  applied to the corresponding triangle  for $ A_{T} $. Thus we get a  pullback  map  from the  sequence  (\ref{logresiduesequence})     to the corresponding sequence for $ A_{T} $.
\begin{lemma}   \label{Eispullbackcompatibility}
$ f^{*}_{k}    \big ({} _{ \alpha}  \mathrm{Eis}^{k}_{\QQ_{p}}(t)   \big) = {}_{  \beta   }  \mathrm{Eis}^{k}_{\QQ_{p}}  
( t_{T}  ) $ where $ \beta =  f^{*}_{D} ( \alpha ) $.  Moreover $ f^{*}_{D}(\alpha_{c} ) = \alpha_{A_{T} ,  c}  $.
\end{lemma}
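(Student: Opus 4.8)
The plan is to reduce everything to the defining property of the polylogarithm class and the norm/base-change compatibilities already developed. Since the Eisenstein class $\prescript{}{\alpha}{\mathrm{Eis}}^k_{\QQ_p}(t)$ is obtained from the polylogarithm $\prescript{}{\alpha}{\mathrm{pol}}_{\QQ_p}$ by applying the chain of maps $\pr^k \circ \varrho_t^k \circ t^*$, and since by hypothesis the base-change morphism $f$ induces (via \cite{HuberKings} Lemma 5.1.2, Proposition \ref{basechangelog} and proper base change) a map of localization sequences (\ref{logresiduesequence}) for $A$ to the one for $A_T$, the first assertion will follow once I check that (a) the pullback of the polylogarithm class is again a polylogarithm class with the expected residue, and (b) the map $\pr^k \circ \varrho_t^k \circ t^*$ is compatible with base change in the appropriate sense.

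First I would make precise the pullback of sequence (\ref{logresiduesequence}). Applying $Rf_*$ to the triangle (\ref{localizationtriangle}) for $A_T$ and using the base-change isomorphisms from Proposition \ref{basechangelog} (identifying $f_A^*\Log^{(k)}_{A,\QQ_p} \simeq \Log^{(k)}_{A_T,\QQ_p}$ compatibly with splittings and transition maps) together with smooth/proper base change for the shrieks $\iota^!$ handled via Lemma \ref{unipotentshrieks}, I get a morphism from the triangle for $A$ to $Rf_*$ of the triangle for $A_T$. Taking $\Hom_{\mathscr D(-)_{\QQ_p}}(\QQ_p,-)$, inverse limit over $k$, and using left exactness of $\varprojlim$, I obtain a commutative diagram of short exact sequences (\ref{logresiduesequence}) with vertical arrows the pullbacks $f_U^*$, $f_D^*$ and (on $\mathrm H^0_{\et}(S,\QQ_p)$) $f^*$. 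In particular the residue maps commute: $\mathrm{res}_{U_T}\circ f_U^* = f_D^* \circ \mathrm{res}_U$.

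Now for (a): the class $f_U^*(\prescript{}{\alpha}{\mathrm{pol}}_{\QQ_p}) \in \mathrm H^{2d-1}_{\et}(U_T, \Log_{U_T,\QQ_p}(d))$ has residue $\mathrm{res}_{U_T}(f_U^*\prescript{}{\alpha}{\mathrm{pol}}_{\QQ_p}) = f_D^*(\mathrm{res}_U\,\prescript{}{\alpha}{\mathrm{pol}}_{\QQ_p}) = f_D^*(\alpha) =: \beta$, and one checks $\beta \in \QQ_p[D_T]^0$ since $f_D^*$ intertwines the trace maps $\varepsilon_D$, $\varepsilon_{D_T}$ (this again comes from base change for $\pi_D$). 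By the uniqueness in Definition \ref{poldefi}, $f_U^*(\prescript{}{\alpha}{\mathrm{pol}}_{\QQ_p}) = \prescript{}{\beta}{\mathrm{pol}}_{\QQ_p}$ for $A_T$; passing to the $k$-th component via transition maps gives the same for $\prescript{}{\alpha}{\mathrm{pol}}^k_{\QQ_p}$. For (b): since $t_T = f|_{U_T}$-base-change of $t$, the square (\ref{abelianschemepullback}) and the adjunction $\mathrm{id}\to Rf_*f^*$ give $t_T^* \circ f_U^* = f^* \circ t^*$ on cohomology; moreover the splitting isomorphism $\varrho_t^k$ of Corollary \ref{splittingprinciple} is defined purely from $\varphi_\#$ and the unit-section splitting of (\ref{unipotence}), both of which are compatible with base change by Propositions \ref{basechangelog} and \ref{functoriality}, so $\varrho_{t_T}^k \circ t_T^* \circ f_U^* = f^*_{\mathrm{Sym}} \circ \varrho_t^k \circ t^*$ where $f^*_{\mathrm{Sym}}$ is pullback on $\prod_i \mathrm{Sym}^i\mathscr H$; composing with $\pr^k$ yields $f_k^*(\prescript{}{\alpha}{\mathrm{Eis}}^k_{\QQ_p}(t)) = \prescript{}{\beta}{\mathrm{Eis}}^k_{\QQ_p}(t_T)$. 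Finally, the assertion $f_D^*(\alpha_c) = \alpha_{A_T,c}$ follows from the definition (\ref{alphacchoice}): $f_D^*$ commutes with $e_*$ (as $e_T$ is the base change of $e$, so $f_D^* e_* = e_{T,*} f^*$ and $f^*(1)=1$) and with $\pi_D^*$ (as $\pi_{D_T} = \pi_D$ base-changed), hence $f_D^*(c^{2d}e_*(1) - \pi_D^*(1)) = c^{2d}e_{T,*}(1) - \pi_{D_T}^*(1) = \alpha_{A_T,c}$. The main obstacle is the bookkeeping in step (a)'s first paragraph — assembling all the base-change isomorphisms (for $Rf_*$ applied to shrieks, for the logarithm sheaves, and for the trace maps) into a genuine morphism of the inverse systems of long exact sequences (\ref{longexactlogsequence}) so that the residue diagram commutes on the nose — but this is a formal, if somewhat lengthy, diagram chase using the cited results of Huber–Kings and the six-functor formalism, with no new input required.
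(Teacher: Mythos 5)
Your argument is correct and follows essentially the same route as the paper's proof: pull back the polylogarithm and identify it via uniqueness of residue, show $\varrho^k_t$ is compatible with base change as in the proof of Corollary~\ref{splittingprinciple}, then chase the definitions, and for $\alpha_c$ compare $e_*$ and $\pi_D^*$ under $f_D^*$ (the paper does this by passing to an \'etale cover trivializing $D$, while you invoke base change for the closed immersion $e$ directly — both work). No substantive difference.
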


\begin{proof}
Since $ \alpha \in  \QQ_{p} [D]^{0}  =  \ker  \epsilon_{D}  $,  the compatibility of (\ref{logresiduesequence})  along   pullbacks implies that  $ \beta \in \QQ_{p}[D_{T}]^{0} $. Let   $  {  }  _ { 
 \beta    }   \mathrm{pol}_{T, \QQ_{p}}  ^  { (  k  ) }
 $ denote the polylogarithm for $ A_{T} $ with residue  $ \beta $.
Then $  f_{U}^{*} ( {}_{\alpha}  \mathrm{pol}_{\QQ_{p}}  ^ {k} )    = {}_{ \beta } \mathrm{pol}_{T  ,    \QQ_{p} } ^ { k  } $ by uniqueness of these classes  with   respect  to  residues.  It is easily seen  from the proof of Corollary \ref{splittingprinciple}  
that $  \varrho^{k}_{t_{T}} $ is the pullback of $ \varrho^{k}_{t} $ 
along $ f  $ once the  functorial  identifications are made.   
Combining  this with the relation $   t  \circ  f  = f_{U} \circ t_{T} $,  we  see   that  $ (\oplus_{i=1}^{k} f_{i}^{*})\circ \varrho^{k}_{t} \circ  t^{*} =  \varrho^{k}_{t_{T}}   \circ  (t_{T})^{*} \circ f_{U}^{*}  $.  So 
\begin{align*} f_{k}^{*}( {}_{ \alpha}  \mathrm{Eis}_{\QQ_{p}}^{k}(t) )   &  =  f_{k}^{*} \circ \pr^{k} \circ \varrho_{t} ^ { k  }     \circ t^{*}   ( {}_{\alpha} \mathrm{pol} _{ \QQ_{p}}   ^ { k }  ) \\
& = \mathrm{pr}^{k} \circ \varrho_{t_{T}}    ^ { k   }   \circ t_{T}^{*} \circ f_{A}^{*} (  {}_{ \alpha}  \mathrm{pol} _ { \QQ_{p}} ^ { k }  )         =  {}_ {\beta} \mathrm{Eis} ^{k} _ {\QQ_{p}} (t_{T})   
\end{align*}
which proves the first claim.
For the second claim,  note that since $ f_{D}^{*} \circ \pi_{D}^{*}  = \pi_{D_{T}}^{*} \circ f^{*} :  \mathrm{H}^{0}_{\et}(  S, \QQ_{p} ) \to  \mathrm{H} 
   ^{0}_{\et}(D, \QQ_{p}) $ and $ f^{*}(1) = 1 \in \mathrm{H}_{\et}^{0} ( T , \QQ_{p} ) $, we have $ f_{D}^{*} ( \pi_{D}^{*}(1)) = \pi_{D_{T}}^{*}(1)   $.   So  it  suffices  to  show  that  $ f_{D}^{*} \circ e_{D,*}(1) = e_{D_{T}, *}(1) $.  This is easily seen by moving to an \'{e}tale cover of  $ S $ on which  $ D $ is trivialized. 
\end{proof}The next  result is an analogue of \cite[Lemma 1.7(2)]{kkato}.
Let $  \varphi  : A \to A '  $ be   a     $S$-isogeny  and   
$ D' $, $ U' $, $ \tilde{U} $, etc.,    be as in \S \ref{normcompsec}.
For the result below, we assume  that $  s : =  \varphi \circ t \neq e' $ and that  $  \ker  \varphi  $ is a  constant    $S$-group scheme over a  finite   abelian   group $ \Gamma $, so that the structure map $ f : \ker \varphi \to S $ is identified with  
$ \sqcup_{\gamma \in \Gamma } \mathrm{id}_{S} $.         
For any $ \gamma \in  \Gamma $,  we  denote $ e_{\gamma} : S \to \ker \varphi $ the section indexed by $ \gamma $ and set $ t_{\gamma} := t + i \circ e_{\gamma} $ where $ i   :      \ker \varphi \to A $ denotes the inclusion.       Finally,  let     $$  \mathrm{Sym}^{k}\varphi_{*}  :  \mathrm{H}^{2d-1}_{\et}\big(  S,  \mathrm{Sym}^{k}   ( \mathscr{H}_{A,\QQ_{p}} )  (d)   \big)  \to   \mathrm{H}^{2d-1}_{\et} \big ( S ,  \mathrm{Sym}^{k}(\mathscr{H}_{A', \QQ_{p}} ) (d)    \big )  $$
denote the map induced by $ \varphi  $.   
\begin{lemma}   \label{Eisnormcomp}          
$  { } _ {   \beta }  \mathrm{Eis} ^{ k } _ { \QQ_{p}} ( s )  
=    \sum _ { \gamma \in \Gamma }          \mathrm{Sym}^{k} \varphi_{*}    \big  (   { } _ {   \alpha   }     \mathrm{Eis}^{k}_{\QQ_{p}}(t_{\gamma})  \big )  $ where $ \beta  =  \varphi_{D, *} (\alpha) \in \QQ_{p}[D ' ]^0 $.    
\end{lemma}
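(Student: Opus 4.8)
The plan is to deduce this statement from the norm compatibility for polylogarithm classes (Proposition \ref{poltrace}) by tracking how the residue map, the splitting isomorphisms $\varrho^{k}$, and the torsion pullbacks interact with the isogeny $\varphi$. The key geometric input is that the section $s : S \to U'$ and the sections $t_{\gamma} : S \to U$ (for $\gamma \in \Gamma$) fit together: $\varphi$ restricts to a map $\tilde{U} \to U'$, the preimage $\varphi^{-1}(s(S)) \subset A$ is precisely the disjoint union $\sqcup_{\gamma} t_{\gamma}(S)$ (using that $\ker\varphi$ is the constant $S$-group scheme on $\Gamma$), and each $t_{\gamma}$ lands in $\tilde{U}$ since $s \neq e'$ forces $t_{\gamma} \notin \ker\varphi$ and $(N,c)=1$ keeps us away from $D$. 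So the first step is to set up this picture carefully and record the factorizations $\varphi \circ t_{\gamma} = s$ on the level of sections $S \to U'$.

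\textbf{Main steps.} First I would invoke Proposition \ref{poltrace} to get $\mathscr{N}_{\varphi}({}_{\alpha}\mathrm{pol}_{A,\QQ_p}) = {}_{\beta}\mathrm{pol}_{A',\QQ_p}$ with $\beta = \varphi_{D,*}(\alpha)$, and pass to the $k$-th component to obtain the corresponding identity for ${}_{\beta}\mathrm{pol}^{k}_{A',\QQ_p}$. Second, I would pull back both sides along $s : S \to U'$ and apply the composition $\pr^{k} \circ \varrho^{k}_{s}$ of (\ref{polylogtoeisenstein}); by Definition \ref{EisdefiQp} the right-hand side becomes ${}_{\beta}\mathrm{Eis}^{k}_{\QQ_p}(s)$. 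Third — this is the heart of the argument — I would unwind what $s^{*} \circ \mathscr{N}_{\varphi}$ does on the left. The norm map $\mathscr{N}_{\varphi}$ is built from $\varphi_{\natural}$, i.e. from $R\varphi_{*}$ applied to a morphism on localization triangles; after applying $R\pi'_{*}$ and pulling back along $s$, the adjunction $\varphi_{*} = \varphi_{!} \dashv \varphi^{!}$ together with the base-change identity $\varphi^{-1}(s(S)) = \sqcup_{\gamma} t_{\gamma}(S)$ should turn the pullback $s^{*} \circ R\pi'_{*}(-)\circ \varphi_{\natural}$ into $\sum_{\gamma} (\text{pushforward along }\varphi) \circ t_{\gamma}^{*}$. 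Concretely, the functoriality isomorphism $\varphi_{\#}$ of Proposition \ref{functoriality} relates $\Log^{(k)}_{A,\QQ_p}$ and $\varphi^{!}\Log^{(k)}_{A',\QQ_p}$ compatibly with the splittings $\mathbf{1}^{(k)}$, and by the last clause of that proposition its pullback under the relevant sections induces $\mathrm{Sym}^{k}\varphi_{*}$ on graded pieces; this is exactly what produces the operator $\mathrm{Sym}^{k}\varphi_{*}$ on Eisenstein classes. Assembling: $\pr^{k}\circ\varrho^{k}_{s}\circ s^{*}\circ\mathscr{N}_{\varphi}(-) = \sum_{\gamma\in\Gamma}\mathrm{Sym}^{k}\varphi_{*}\circ\pr^{k}\circ\varrho^{k}_{t_{\gamma}}\circ t_{\gamma}^{*}(-)$, and applying this to ${}_{\alpha}\mathrm{pol}^{k}_{A,\QQ_p}$ yields the claimed identity by Definition \ref{EisdefiQp}.

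\textbf{Bookkeeping to verify.} Several compatibilities need to be checked rather than assumed: that the identification $e^{*}\Log^{(k)}_{\QQ_p} \simeq \prod_{i}\mathrm{Sym}^{i}\mathscr{H}_{\QQ_p}$ used in defining $\varrho^{k}_{t_{\gamma}}$ is the one compatible with $\varphi_{\#}$ (this follows from uniqueness in Proposition \ref{functoriality} and the fact, used in Corollary \ref{splittingprinciple}, that $\varrho^{k}$ is independent of the auxiliary isogeny — here one can take $\varphi$ itself to annihilate $t_{\gamma} - t_{\gamma'}$); that the trace/pushforward maps assemble correctly over the finite index set $\Gamma$, which is where the hypothesis that $\ker\varphi$ is \emph{constant} over $\Gamma$ is essential so that $R\varphi_{*}$ on the relevant sheaves decomposes as a direct sum indexed by $\Gamma$; and that $\beta = \varphi_{D,*}(\alpha)$ indeed lies in $\QQ_p[D']^{0}$ (immediate from $\varphi_{D,*}$ commuting with the trace $\varepsilon$).

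\textbf{Main obstacle.} The delicate point is the third step: carefully identifying $s^{*}\circ R\pi'_{*}(\varphi_{\natural})$ with the sum $\sum_{\gamma}$ of pushforwards of torsion-section pullbacks. This is a diagram chase through the morphism of localization triangles of \cite[Lemma 5.1.2]{HuberKings}, the base-change isomorphism $\varphi^{!}Rj_{D',*} \xrightarrow{\sim} Rj_{\tilde{D},*}\tilde\varphi^{!}$, and the adjunction $\varphi_{*}\dashv\varphi^{!}$, and one must be careful that the mate operation $\varphi^{\natural}\mapsto\varphi_{\natural}$ interacts correctly with the section pullbacks after applying $R\pi'_{*}$. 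The cleanest route is probably to avoid chasing the full triangle and instead argue on the residue sequence (\ref{logresiduesequence}) directly: $\mathscr{N}_{\varphi}$ acts as $\varphi_{D,*}$ in the middle and as identity on the right, so the image of ${}_{\alpha}\mathrm{pol}$ is pinned down by its residue, and then it remains only to compare $s^{*}$ on $\mathrm{H}^{2d-1}(U',\Log_{U',\QQ_p}(d))$ with $\sum_{\gamma}(\text{pushforward})\circ t_{\gamma}^{*}$ on $\mathrm{H}^{2d-1}(U,\Log_{U,\QQ_p}(d))$, which is a statement purely about the geometry of the finite étale cover $\tilde{U} \to U'$ near the torsion sections and the functoriality isomorphism $\varphi_{\#}$.
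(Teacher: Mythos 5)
Your proposal is correct and takes essentially the same route as the paper: the paper's proof hinges on exactly the Cartesian square you identify (that $\varphi^{-1}(s(S))$ is the disjoint union of the $t_\gamma(S)$, packaged as the Cartesian diagram (\ref{Eiscartesian}) with source $\ker\varphi$), combines Proposition~\ref{poltrace} with the base-change identity $s^* R\tilde\varphi_* \simeq Rf_* \tau^*$ to rewrite $s^*\circ\mathscr{N}_\varphi$ as $\sum_\gamma(\cdot)\circ t_\gamma^*$, and then identifies $\varrho^k_s\circ t_\gamma^*\varphi_\#\circ(\varrho^k_{t_\gamma})^{-1}$ with $\mathrm{Sym}^k\varphi_*$ via Proposition~\ref{functoriality}. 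The only small imprecision is in your bookkeeping remark: to compare $\varrho^k_s$ and $\varrho^k_{t_\gamma}$ one should choose an auxiliary isogeny $\psi\colon A'\to A''$ annihilating $s$ and use $\psi\circ\varphi$ for $\varrho^k_{t_\gamma}$ (as the paper does), rather than take $\varphi$ to ``annihilate $t_\gamma - t_{\gamma'}$.''
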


\begin{proof}  As $ s  \neq e'  $  is $ N $-torsion  and $ (N,c) = 1 $,  $  s  $ factors via $ U' $ and therefore each $ t_{\gamma }$ factors via $ \tilde{U} $.  Let $ \tau : \ker \varphi \to  \tilde{U} 
$ be the morphism which  equals  $  t_{\gamma} $ on the component indexed by $  \gamma  $.      We claim     that  the  diagram 
\begin{equation}   \label{Eiscartesian}      
\begin{tikzcd}[]   
\tilde{U}  \arrow[r, "\tilde{\varphi}"]  &   U'   \\
 \ker  \varphi  \arrow[u ,  " \tau    " ]    \arrow[r, "f"]  
 &  S  \arrow[u, "s"']       
\end{tikzcd}   
\end{equation}
is Cartesian  in $ \mathbf{Sch} $. For this, we may replace $ \tilde{\varphi} $ with $ \varphi : A \to A' $. So  let   $ p : X \to A  $, $ q : X \to   S   $ in $ \mathbf{Sch} $ be  such that $  \varphi   \circ p = s  \circ  q  $. Assume  wlog that $ X $ is connected. Since  $ \pi \circ p = \pi '  \circ  \varphi  \circ p  = \pi '  \circ s  \circ q = q $, $ p $ is  an   $ S $-morphism. 
Since  $   \varphi   \circ  ( p - t  \circ  q ) = s  \circ  q -  s  \circ  q  = 0 $, there is a unique  $S$-morphism $ \kappa : X \to \ker  \varphi  $ such that $ p - t \circ q =    i     \circ  \kappa  
$. Since $ X $ is connected, there is a unique $ \delta \in \Gamma $ such that $ \kappa $ factors as $ X \xrightarrow{ q } S \xrightarrow{ e_{\delta} }  \ker \varphi $. So $  p   =  i   \circ     \kappa   + t \circ  q =   t_{\delta} \circ  q  = \tau    \circ \kappa $ and the universal property is   verified.   

Since (\ref{Eiscartesian}) is Cartesian, we have   isomorphisms  $  \sqcup_{ \gamma } s^{*} =    f^{*}  s^{*}  \simeq  \tau^{*} \tilde{\varphi}^{!}$ and $  s  ^{*}  R \tilde{\varphi}_{*} \xrightarrow{\sim
} Rf_{*} \tau^{*} $.  In particular,  $ R s _{*} s ^{*} R \tilde{ \varphi  } _{*}     \tilde{ \varphi } ^{!}   \xrightarrow{\sim} R (sf)_{*} (sf)^{*} =   \sqcup_{\gamma }  Rs_{*}s^{*}   $. Using this, we obtain a commutative diagram  

\begin{center}  
\begin{tikzcd}[sep = large]  
 \mathrm{H}^{2d-1}_{\et}\big (U, 
 \mathcal{L}og^{(k)}_{U,\mathbb{Q}_{p}}(d) \big ) \arrow [d, swap ,"{\oplus t_{\gamma}^{*}}"]   \arrow[r,  " 
  \jmath^{*} \circ  \varphi_{\#}    "] & \mathrm{H}^{2d-1}_{\et}\big (\tilde{U} , 
\tilde{\varphi} ^{*}  \mathcal{L}og^{(k)}_{ U  ',\mathbb{Q}_{p}}(d) \big )   \arrow[r,   "\tilde{ \varphi } _{*}  "   ]   \arrow[d, "\oplus s^* 
"]           &   \mathrm{H}^{2d-1}_{\et}\big (U', 
 \mathcal{L}og^{(k)}_{U',\mathbb{Q}_{p}}(d) \big )    \arrow [d, "{ s^ {*}}"]   \\    
{\bigoplus_{\gamma}\mathrm{H}^{2d-1}_{\et}\big (S,t_{\gamma}^{*}\mathcal{L}og^{(k)}_{U,\mathbb{Q}_{p}}(d)   \big    )  } \arrow[r, "{ (t_{\gamma}^{*} \varphi_{\#})_{\gamma} }"] &  
 \bigoplus_{\gamma}\mathrm{H}^{2d-1}_{\et}\big (S,s^{*}\mathcal{L}og^{(k)}_{U',\mathbb{Q}_{p}}(d)   \big    )   \arrow[r, "\sum"]     
   &   \mathrm{H}^{2d-1}_{\et}\big (S, s ^{*}\mathcal{L}og^{(k)}_{U',\mathbb{Q}_{p}}(d)   \big    )      
\end{tikzcd}
\end{center}
where maps in   the     right square are induced by various adjunction    transformations    applied to $  \mathcal{L}og^{(k)}_{U',\QQ_{p}}(d) $,      e.g., the middle vertical arrow is induced by $  R \tilde{ \varphi } _{*}   \tilde { \varphi }  ^{!}  \to     R s_{*}  s^{*}    R   \tilde {  \varphi  } _{*} \tilde { \varphi  }    ^{!} $.  The composition  given by the    top row   is just $  \mathscr{N}_{\varphi} $ by the discussion in \S \ref{normcompsec}. So by  Proposition   \ref{poltrace}, it suffices to show that for each $ \gamma  \in  \Gamma  $, the composition $ \varrho_{s}^{k}     \circ t_{\gamma}^{*}  \varphi_{\#}  \circ  (\varrho^{k}_{t_{\gamma}})   ^ { - 1 }       $ is equal to $ \mathrm{Sym}^{k}\varphi_{*} $ on the $k$-th symmetric power. 
But   this     composition is easily seen  to be $ e^{*} \varphi_{\#} $ once the maps at the ends  are written in terms of an isogeny $  \psi :A' \to A''  $ that annihilates $ s $ (see the proof Corollary \ref{splittingprinciple}). That claim  then  follows by 
Proposition \ref{functoriality}. 
\end{proof} 

\subsection{Interpolation}            
\label{momentmapsec}

In this subsection,  we recall the definition of integral logarithm sheaf and state Kings' result on the interpolation  of Eisenstein classes.  For   proofs,  we  refer the reader to \cite[\S4-6]{Kings15}.  

Let $ \pi : A  \to S $ be as in  \S  \ref{Qppolylogsection}. Recall 
(\ref{HrisApr}) that for each positive integer $ r $, $ \mathscr{H}_{r} := \mathscr{H}_{\Lambda_{r}} $  is isomorphic to the representable sheaf  associated with $ p^{r} $-torsion subscheme $   X_{r} : = A[p^{r}] $. 
Let  $ \pr_{r} : X _{r} \to S $ denote the structure map and set  
\begin{equation}  \label{lambdarHr} \Lambda_{r} [\mathscr{H}_{r}  ]  = \Lambda_{r}[X_{r}]     :=  \pr_{r,*} 
{  \Lambda _{r}   }     \in    \mathbf{Sh} ( S_{\et} )   
\end{equation} 
Then $ \Lambda_{r}[\mathscr{H}_{r}] $ is a sheaf of (abelian) group algebras over $ \Lambda_{r} $ with product $ \mathscr{H}_{r} \times \mathscr{H}_{r} \to \mathscr{H}_{r} $ induced by the group structure on $ X_{r} $.  More generally,  let $ t : S \to A $ be  a  torsion  section and let $  \pr_{r} :  X_{r} \langle t   \rangle  \to  S $ be defined   by the pullback diagram \vspace{-0.1em} \begin{center}
\begin{tikzcd} 
X_{r}   \langle t  \rangle  \arrow[r]  \arrow[d, "{\pr_{r}}"'] &  A  _ { r }    \arrow[d, "{[p^{r}]}"]    \\
    S  \arrow[r, "t"]   &   A  
\end{tikzcd}  
\end{center}
where $ A_{r} :  = A $ considered as a finite \'{e}tale cover of $ A $.
We denote by $ \mathscr{H}_{r} \langle t  \rangle   $  the    representable  sheaf   corresponding to $ X_{r}  \langle t \rangle $ and  define  
\begin{equation}   \label{lambdaHrt}          \Lambda_{r}[ \mathscr{H}_{r} \langle  t \rangle ] = \Lambda_{r} [ X_{r}  \langle t  \rangle  ]  : =  \pr_{r,*} \Lambda_{r}  \in  \mathbf{Sh}(S_{\et} ) 
\end{equation} 
Then $ \Lambda_{r} [ \mathscr{H}_{r} \langle e \rangle ] =  \Lambda _ { r }  [  \mathscr{H}_{r} ] $. Since the cover $  A_{r} $ is a $ X_{r} $-torsor  on $ A $, $ X_{r} \langle t   \rangle   $ is a $ X_{r} $-torsor over $ S $. This makes   $  \Lambda_{r}  [ \mathscr{H}_{r}  \langle t  \rangle  ] $  a sheaf of modules over $ \Lambda_{r} [ \mathscr{H} _{r}  ]   $.  Let  $ \lambda_{r} =  \lambda_{r} \langle t \rangle : X_{r+1} \langle t   \rangle  \to  X_{r}  \langle   t    \rangle  $ be the map induced by the universal property of $ X_{r} \langle t \rangle $   applied  to      $ X_{r+1} \langle t \rangle  \to A_{r+1} \xrightarrow{[p]} A_{r} $ and $ \pr_{r+1} $.   Then $ (X_{r} \langle t  \rangle )_{r} $ forms    a 
    pro-system of finite  \'{e}tale covers of $ S $.   
The adjunction $ \lambda_{r,*} =  \lambda_{r,!} \dashv 
\lambda_{r}^{!} = \lambda_{r} ^ { * }     $ (by \'{e}taleness of $ \lambda_{r}$) gives us a map      $    \lambda_{r, * }  
\Lambda_{r+1} = \lambda_{r,!} \lambda_{r}^{!}   \,  
\Lambda   _{r+1} \to {   \Lambda _{r+1 }   }     $    
and post composing with $ \pr_{r,*} $ gives us a map  $ \Lambda_{r+1}[ \mathscr{H}_{r+1}   \langle  t   \rangle    ]  =   \pr_{r , * }  \lambda_{r , *}   \,   \Lambda  _{r+1} \to  \pr_{r,*}   \,  \Lambda _{r+1} $. Reducing modulo $ p^{r} $,    we   obtain  an  induced ``trace"  map   \begin{equation}    \label{tracemaplambdaHr}       \mathrm{Tr}_{r    } : \Lambda_{r+1} [ \mathscr{H} _ {r+1}   \langle  t  \rangle  ] \to  \Lambda_{r} [ \mathscr{H} _{r}  \langle t  \rangle    ]   
\end{equation} 
which  is    compatible with the   underlying        module  structures.   
\begin{definition} The  \emph{sheaf of Iwasawa algebras of $ \mathscr{H}$ on $S$} is defined to be the pro-system $ \Lambda(\mathscr{H}) :=  ( \Lambda_{r}[\mathscr{H}_{r}] )  _  {r  \geq   1   } $ with transition maps  given by   (\ref{tracemaplambdaHr}) for $ t = e  $.      The \emph{sheaf of Iwasawa modules associated with $ t $}  is defined to be the pro-system  $  (  \Lambda_{r}[  \mathscr{H}_{r}  \langle  t  \rangle  ]  )  _ { r \geq  1   }     $.\end{definition}   
For each  non-negative integer $ k $, let $  \Gamma_{k}(\mathscr{H}_{r}) $ denote  the sheafification of the presheaf that sends an open subscheme $ U \subset S $ to the $ k $-th divided power algebra $ \Gamma_{k}( \mathscr{H}_{r} (U) ) $. Then the reduction maps $ \mathscr{H}_{r+1} \to \mathscr{H}_{r} $ induce isomorphisms    $ \Gamma_{k} ( \mathscr{H}_{r+1} ) \otimes _ { \ZZ / p^{r+1} \ZZ } \ZZ / p^{r} \ZZ  \simeq \Gamma_{k} ( \mathscr{H}_{r} ) $ and we obtain a $ \ZZ_{p} $-sheaf  
$ \Gamma_{k}(\mathscr{H}) :=   \big  (\Gamma_{k}(\mathscr{H}_{r}   )       \big    )_{r \geq  1} $.  There is a canonical map   
\begin{equation} \gamma_{k} :  \mathrm{Sym} ^{k}  ( \mathscr{H} )  \to   \Gamma_{k}  ( \mathscr{H} )   \label{canonicalisogaammaH}  
\end{equation}  
induced by   sending     $ m ^{\otimes k} \in \mathrm{Sym}^{k} ( \mathscr{H} )$ to $ k! m^{[k]} $ for $ m $ a  section of $  \mathscr{H} $.    It induces an  isomorphism  $   
\mathrm{Sym}^{k} ( \mathscr{H} ) \otimes \QQ_{p}  \to  \Gamma_{k} ( \mathscr{H} ) \otimes \QQ_{p}  \simeq \Gamma_{k} ( \mathscr{H}_{\QQ_{p}    }    ) $  between   the   corresponding   $ \QQ_  { p } $-sheaves.  

The sheaf $ \mathscr{H}_{r} \in  \mathbf{Sh}   (  S _{\et}  )  $  possesses over $  X_{r} 
\in S_{\et}  $  a  tautological  section $ \tau_{r} \in \Gamma( X_{r} , \mathscr{H}_{r} ) = \mathscr{H}_{r}( X_{r} ) = \mathrm{Hom}_{S}(  X_{r}  , X_{r} ) $ corresponding to the identity map $ X_{r} \to X_{r} $.  Its  $ k $-th divided power gives rise to a section $ \tau^{[k]}_{r}  \in \Gamma( X_{r} ,  \Gamma_{k}(\mathscr{H}_{r}) )   $.  Let $ \Gamma_{k}(\mathscr{H}_{r})  _ { \mid X_{r}  } :  =  \pr_{r}^{*}     \,   \Gamma_{k }  (   \mathscr{H}_{r} ) $  denote the restriction of $ \Gamma_{k}(\mathscr{H}_{r} ) $ to $ X _{r} $.    Then 
\begin{align*} \Gamma (X_{r} ,      \Gamma_{k} ( \mathscr{H}_{r} ) ) =  \mathrm{Hom}_{X_{r}} ( \Lambda_{r},  \Gamma_{k} ( \mathscr{H}_{r} ) _ { \mid  X   _{r} }   )    &      \simeq   \mathrm{Hom}_{S} ( \pr_{r, !  }    \Lambda_{r}   ,  \Gamma_{k} ( \mathscr{H}_{r} )  )   \\
&  \simeq  \mathrm{Hom}_{S} (  \Lambda_{r}  [  \mathscr{H}_{r} ] , \Gamma_{k}   (  \mathscr{H}_{r}  )    ) 
\end{align*} 
where the penultimate isomorphism follows via the adjunction $  \pr_{r, !}  \dashv      \pr_{r } ^{!} = \pr_{r} ^{*}  $  and the last by the identification     $ \pr_{r, !} = \pr_{r ,  *} $.  
Thus  $  \tau_ { r } ^ { [ k ] } $ corresponds to a morphism     
\begin{align} \label{momentkr}   \mathrm{mom}^{k}_{r} : \Lambda_{r} [\mathscr{H}_{r}]     \to    \Gamma_{k}  ( \mathscr{H}_{r} ).
\end{align}  
For fixed $ k $ and  varying $ r $, the maps  $ \mathrm{mom}_{r}^{k} $ are   compatible with respect to  $ \mathrm{Tr}_{r} : \Lambda_{r+1}  [ \mathscr{H}_{r+1} ] \to   \Lambda_{r}[\mathscr{H}_{r} ] $ and the reduction maps     $ \Gamma_{k} (   \mathscr{H}_{r+1} )  \to \Gamma_{k} ( \mathscr{H}_{r+1} ) \otimes _ { \Lambda_{r+1}}  \Lambda_{r}  \simeq \Gamma_{k} ( \mathscr{H}_{r} ) $ (\cite[Lemma 4.5.1]{Kings15}).   
\begin{definition}   \label{momentmapdefi}     The  \emph{$k$-th moment map} is defined to be the morphism   
$ \mathrm{mom}^{k} : \Lambda(\mathscr{H})  \to \Gamma_{k}(\mathscr{H}) $  of  pro-systems   obtained by the compatible system     $  (\mathrm{mom}^{k}_{r})_{r \geq    1    } $ given in (\ref{momentkr}).    
\end{definition}  
Parallel to the construction of $ \Lambda(\mathscr{H}) $  
is   the construction of a pro-sheaf on $ A $ that is the integral analogue of the $ \QQ_{p} $-logarithm pro-sheaf.  
Let  $ [p^{r}] : A_{r} \to A    $ be the $ X_{r} $-torsor over $ A $ as  above.     Denote by $ \mu_{r} : A_{r + 1 }   
\to A_{r} $ the transition map induced   by   
  $ [p] : A \to A  $. Then we have a pro-system $ (A_{r}  ) 
_{r \geq 1} $ of  finite  \'{e}tale  covers 
  on $ A  $.     For $ s \geq 1 $, let $$  \Lambda_{s} [ A_{r} ] : = [p^{r}]_{*} \Lambda_{s}  \in  \mathbf{Sh}(A_{\et}) . $$ 
Then as above, we get morphisms $  \mu_{r,s} : \Lambda_{s}[A_{r+1} ] \to \Lambda_{s} [A_{r}] $ for all $ s $, $ r $.  If $ s = r $,  the     pre-composition of this map with reduction modulo $ p^{r} $ gives a  ``trace"  map  $  \mathrm{Tr}_{r 
} : \Lambda_{r+1} [A_{r+1}]  \to  \Lambda_{r}  [ A_{r} ]  $ as in (\ref{tracemaplambdaHr}).

\begin{definition} 
The \emph{integral logarithm sheaf}  $ \mathcal{L} $ is defined to be the pro-system  $ ( \Lambda_{r }   [ A_{r} ] ) _ { r \geq 1 }   \in \mathbf{Sh}(X_{\et})^{\mathbb{N}} 
$ with transition maps given by $ \mathrm{Tr}_{r}  $. 
\end{definition} 
Via the $ X_{r} $-torsor $ [p^{r}] :  A_{r} \to A $, $ \mathcal{L} $ becomes a free rank $ 1 $ module  over $  \pi^{*}  \Lambda ( \mathscr{H} )  $. The base change compatibility of such pro-systems (\cite[\S 4.4]{Kings15}) implies that $ \mathcal{L} $ is compatible with arbitrary base change. Consequently, there is an isomorphism   
\begin{equation}    \label{fakesplitting}    \varsigma_{t} :  t^{*} \mathcal{L} \simeq   \Lambda (     \mathscr{H} \langle t  \rangle  )
\end{equation}   of  sheaf of modules over  $ \Lambda(\mathscr{H} ) $.  In particular, $ e^{*}  \mathcal{L}  \simeq  \Lambda ( \mathscr{H} ) $. 
   
The integral logarithm sheaf enjoys properties similar to those of $ \Log_{\QQ_{p}} $. Below we record the ones needed to state Kings' result.  
\begin{proposition}  [Splitting principle]     Let $ n $  be a  positive integer  and  $ t : S \to A $ be  an $ n $-torsion section. Then there  exists a   canonical       homomorphism $ [n]_{\#} : t^{*} \mathcal{L}    \to   \Lambda   ( \mathscr{H} )   $ 
which is an isomorphism if $ (n,p) = 1 $.      \label{intsplitting}          
\end{proposition}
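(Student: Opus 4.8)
The plan is to build $[n]_{\#}$ level by level as a transfer (trace) map, in the same spirit in which Corollary \ref{splittingprinciple} was deduced from the functoriality of $\Log$ (Proposition \ref{functoriality}) in the $\QQ_{p}$-setting. First I would use the isomorphism $\varsigma_{t}\colon t^{*}\mathcal{L}\simeq\Lambda(\mathscr{H}\langle t\rangle)$ of (\ref{fakesplitting}) together with the tautological identification $X_{r}\langle e\rangle=X_{r}=A[p^{r}]$, so that $\Lambda(\mathscr{H})=\Lambda(\mathscr{H}\langle e\rangle)$ and it suffices to produce, for each $r$, a map $\Lambda_{r}[X_{r}\langle t\rangle]\to\Lambda_{r}[X_{r}]$ compatible with the transition maps $\mathrm{Tr}_{r}$ of (\ref{tracemaplambdaHr}). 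Since $t$ is $n$-torsion, multiplication by $n$ sends a point $b$ of $X_{r}\langle t\rangle$ (so $[p^{r}]b=t$) to $nb$ with $[p^{r}](nb)=n\,[p^{r}]b=n\,t=e$, hence defines an $S$-morphism $\nu_{r}\colon X_{r}\langle t\rangle\to X_{r}$; as $X_{r}\langle t\rangle$ is an $A[p^{r}]$-torsor over $S$ and $X_{r}$ is finite \'{e}tale over $S$, the map $\nu_{r}$ is automatically finite \'{e}tale, and I would take $[n]_{\#,r}$ to be $\pr_{r,*}$ of the trace $\nu_{r,!}\nu_{r}^{!}\Lambda_{r}\to\Lambda_{r}$. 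This is precisely the level-$r$ component of $t^{*}$ applied to the isogeny-functoriality morphism $[n]_{\#}\colon\mathcal{L}\to[n]^{!}\mathcal{L}\simeq[n]^{*}\mathcal{L}$ of \cite{Kings15} (the last identification being termwise, via Lemma \ref{unipotentshrieks} and the $S$-unipotence of the terms of $\mathcal{L}$), combined with $[n]\circ t=e$ and $e^{*}\mathcal{L}\simeq\Lambda(\mathscr{H})$; this explains both the notation and the canonicity, no choice having been made. One also checks that $\nu_{r}$ intertwines the $X_{r}$-action on $X_{r}\langle t\rangle$ with the self-action of $X_{r}$ twisted by the endomorphism $[n]$, so that $[n]_{\#}$ is semilinear over the ring endomorphism of $\Lambda(\mathscr{H})$ induced by multiplication by $n$ on $\mathscr{H}$ --- in particular it is a homomorphism of $\Lambda(\mathscr{H})$-modules up to that twist, as claimed.

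It then remains to verify compatibility with the transition maps and the isomorphism assertion. For the former, $\mathrm{Tr}_{r}$ is the trace along $\lambda_{r}\langle t\rangle$ (resp.\ $\lambda_{r}\langle e\rangle$), which is multiplication by $p$, followed by reduction modulo $p^{r}$; since $\nu_{r}\circ\lambda_{r}\langle t\rangle=\lambda_{r}\langle e\rangle\circ\nu_{r+1}$ (both sides being multiplication by $pn$), functoriality of the trace under composition of finite \'{e}tale maps and its compatibility with reduction of coefficients force the relevant square to commute, so the $[n]_{\#,r}$ assemble into a morphism of pro-systems $[n]_{\#}\colon t^{*}\mathcal{L}\to\Lambda(\mathscr{H})$. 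For the latter, if $(n,p)=1$ then $n$ is a unit modulo every $p^{r}$, so multiplication by $n$ is an automorphism of $A[p^{r}]$; therefore $\nu_{r}$, being a torsor morphism twisted by that automorphism, is an isomorphism of $S$-schemes, its trace is the induced isomorphism $\Lambda_{r}[X_{r}\langle t\rangle]\xrightarrow{\sim}\Lambda_{r}[X_{r}]$, and $[n]_{\#}$ is an isomorphism in the limit.

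I expect the one genuinely delicate point to be the transition-map compatibility: when $p\mid n$ the square formed by $\nu_{r},\nu_{r+1},\lambda_{r}\langle t\rangle,\lambda_{r}\langle e\rangle$ is not Cartesian (a degree count shows the natural map $X_{r+1}\langle t\rangle\to X_{r}\langle t\rangle\times_{X_{r}}X_{r+1}$ is not an isomorphism), so one cannot appeal to base change for trace maps and must instead extract the identity from functoriality of the trace in composition together with the mod-$p^{r}$ truncations already built into $\mathrm{Tr}_{r}$. This is exactly the bookkeeping that underlies the isogeny-functoriality of $\mathcal{L}$ in \cite{Kings15}, which I would cite for the details; everything else is formal.
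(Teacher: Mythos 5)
The paper does not give its own proof of this proposition; it states it in \S 2.5 and defers to \cite[\S 4--6]{Kings15}. Your sketch is correct and is the expected construction: $[n]_{\#}$ is, level by level, the trace along the finite \'{e}tale $S$-morphism $\nu_r \colon X_r\langle t\rangle \to X_r$, $b \mapsto nb$, which is $t^{*}$ applied to the isogeny-functoriality morphism $\mathcal{L} \to [n]^{*}\mathcal{L}$ composed with $e^{*}\mathcal{L} \simeq \Lambda(\mathscr{H})$. Two small points. The \'{e}taleness of $\nu_r$ requires no verification: for any $S$-morphism $f$ between finite \'{e}tale $S$-schemes $X \to Y$, the graph of $f$ is an open immersion into the \'{e}tale $S$-scheme $X \times_S Y$, and composing with the projection to $Y$ shows $f$ is \'{e}tale. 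And the step you single out as ``genuinely delicate''---the failure of the square formed by $\nu_r$, $\nu_{r+1}$, $\lambda_r\langle t\rangle$, $\lambda_r\langle e\rangle$ to be Cartesian when $p \mid n$---is actually harmless: the commutativity $\mathrm{Tr}_r \circ [n]_{\#,r+1} = [n]_{\#,r} \circ \mathrm{Tr}_r$ follows at once from functoriality of the trace under composition of finite \'{e}tale morphisms together with its compatibility with reduction of coefficients mod $p^r$, applied to the identity $\nu_r \circ \lambda_r\langle t\rangle = \lambda_r\langle e\rangle \circ \nu_{r+1}$ (both sides being multiplication by $pn$). Proper base change is never invoked, so Cartesianness of the square is simply not needed; your proposed resolution is the right one, just somewhat overstated in its subtlety.
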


\begin{corollary}  \label{integralsplitcoro}  Let $ c $ be an integer prime to $ p $ and $ D = A[c] $.  Then there exists a canonical isomorphism $ \iota_{D}^{*} \mathcal{L} \simeq  \pi_{D}^{*}   \Lambda  (  \mathscr  { H  }  )   $ 
where $ \iota_{D} $, $ \pi_{D} $ are as in   (\ref{geometricsituation}).    
\end{corollary}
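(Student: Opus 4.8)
The plan is to deduce the isomorphism by base change from the integral splitting principle of Proposition \ref{intsplitting}. Form the pullback abelian scheme $A_{D} := A \times_{S} D$, with first projection $f_{A} : A_{D} \to A$ and structure map $A_{D} \to D$. Since $\iota_{D} : D \to A$ is an $S$-morphism (by definition $\pi \circ \iota_{D} = \pi_{D}$), the universal property of the fibre product yields a section $t_{D} : D \to A_{D}$ with $f_{A} \circ t_{D} = \iota_{D}$. The key elementary observation is that $t_{D}$ is a \emph{$c$-torsion} section of $A_{D}$ over $D$: because $D = A[c]$ we have $[c]_{A} \circ \iota_{D} = e_{A} \circ \pi_{D}$, and comparing the two components of a map into $A_{D} = A \times_{S} D$ (the one into $A$ being $e_{A}\circ \pi_{D}$, the one into $D$ being $\mathrm{id}_{D}$ since $[c]_{A_{D}}$ is a $D$-morphism) shows $[c]_{A_{D}} \circ t_{D} = e_{A_{D}}$. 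As $(c,p) = 1$ by hypothesis, Proposition \ref{intsplitting} applied to $A_{D} \to D$ and the section $t_{D}$ furnishes a canonical isomorphism $[c]_{\#} : t_{D}^{*} \mathcal{L}_{A_{D}} \xrightarrow{\sim} \Lambda(\mathscr{H}_{A_{D}})$.

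It then remains to propagate this isomorphism down to $S$ using base-change compatibility. Because $\mathcal{L}$ is compatible with arbitrary base change, there is a canonical identification $\mathcal{L}_{A_{D}} \simeq f_{A}^{*} \mathcal{L}$, whence $t_{D}^{*} \mathcal{L}_{A_{D}} \simeq (f_{A} \circ t_{D})^{*} \mathcal{L} = \iota_{D}^{*} \mathcal{L}$. Likewise the Tate module and the Iwasawa-sheaf construction commute with base change (cf. \cite[\S 4.4]{Kings15}), so $\mathscr{H}_{A_{D}} \simeq \pi_{D}^{*} \mathscr{H}$ and hence $\Lambda(\mathscr{H}_{A_{D}}) \simeq \pi_{D}^{*} \Lambda(\mathscr{H})$. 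Concatenating the three isomorphisms gives the claimed canonical isomorphism $\iota_{D}^{*} \mathcal{L} \simeq t_{D}^{*} \mathcal{L}_{A_{D}} \simeq \Lambda(\mathscr{H}_{A_{D}}) \simeq \pi_{D}^{*} \Lambda(\mathscr{H})$.

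Each arrow above is canonical — $[c]_{\#}$ by Proposition \ref{intsplitting}, the remaining ones by the base-change compatibilities used to build $\mathcal{L}$ and $\Lambda(\mathscr{H})$ — so the composite is canonical as well, and in particular independent of auxiliary choices. I do not expect any serious obstacle: the argument is essentially bookkeeping, and the only point deserving an explicit word is the verification that $t_{D}$ is annihilated by $c$, which is immediate from $D = A[c]$. (Alternatively one may argue without passing to $A_{D}$, using $(\ref{fakesplitting})$ to write $\iota_{D}^{*}\mathcal{L} \simeq \Lambda(\mathscr{H}_{A_{D}}\langle t_{D}\rangle)$ and observing that the content of Proposition \ref{intsplitting} is precisely the identification $\Lambda(\mathscr{H}\langle t\rangle) \simeq \Lambda(\mathscr{H})$ for prime-to-$p$ torsion sections $t$.)
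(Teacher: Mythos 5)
Your proof is correct and takes essentially the approach the paper has in mind: the paper states this as an immediate corollary of Proposition~\ref{intsplitting}, and the base-change-to-$D$ argument you give is precisely the one the paper carries out explicitly for the $\QQ_{p}$-analogue in the proof of Corollary~\ref{splittingprinciple} (``In general, let $A_D := A \times_S D$ and $t_D : D \to A_D$ be the tautological section obtained from $t$ by base change\ldots''). The verification that $t_D$ is $c$-torsion and the base-change compatibility of $\mathcal{L}$ and $\Lambda(\mathscr{H})$ are exactly the needed inputs, and you have both.
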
 

\begin{proposition}    [Vanishing of Cohomology]   Let $ r, s $ be non-negative integers. There exist   natural isomorphisms  $  R^{2d} \pi_{*} (  \Lambda_{s} [ A_{r} ] )  \simeq \Lambda_{s} (-d)  $ which are compatible with respect to $  \mu _{r,s} $ and reduction modulo $ p^{s-1} $. For each  $ i = 0, \ldots,  2 d - 1  $ ,  there exist a sufficiently large integer $ r' $ such that $ R^{i} \pi_{*} \Lambda_{s}[A_{r'}] \to R^{i} \pi_{*} \Lambda_{s} [A_{r} ] $ is zero.  In particular, 
$$    
\mathrm{H}^{i}_{\et} \big  (A,   \mathcal{L}  (d) \big  )  
\simeq    \begin{cases}  0  &  \text{ if } i < 2d   \\  \mathrm{H}^{0}_{\et} (S, \ZZ_{p} )   &  \text{ if }  i = 2d   .
\end{cases} $$ 
\label{vanishingcohomologyintegral}  
\end{proposition}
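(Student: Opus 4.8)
The plan is to reduce everything to the standard structure of the cohomology of the abelian scheme $\pi$, together with the elementary fact that the Gysin pushforward along $[p]$ multiplies $R^{i}\pi_{*}$ by a power of $p$. First I would observe that, since $\Lambda_{s}[A_{r}] = [p^{r}]_{*}\Lambda_{s}$ and $[p^{r}]\colon A_{r}\to A$ is an $S$-morphism,
$$ R\pi_{*}\bigl(\Lambda_{s}[A_{r}]\bigr) = R\pi_{*}[p^{r}]_{*}\Lambda_{s} = R(\pi\circ[p^{r}])_{*}\Lambda_{s}, $$
where $\pi\circ[p^{r}]$ is the structure morphism of $A_{r}$, itself an abelian scheme of relative dimension $d$. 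In particular there is a canonical identification $R^{j}\pi_{*}(\Lambda_{s}[A_{r}]) \simeq R^{j}\pi_{*}\Lambda_{s}$, and for $j=2d$ the trace morphism gives $R^{2d}\pi_{*}\Lambda_{s} \simeq \Lambda_{s}(-d)$. Unwinding the definition of $\mu_{r,s}$ --- it is $[p^{r}]_{*}$ applied to the trace $[p]_{*}\Lambda_{s}\to\Lambda_{s}$ of the finite \'{e}tale isogeny $[p]\colon A\to A$ --- one sees that under this identification $R^{j}\pi_{*}(\mu_{r,s})$ is the Gysin pushforward $[p]_{*}\colon R^{j}\pi_{*}\Lambda_{s}\to R^{j}\pi_{*}\Lambda_{s}$. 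Because the trace is compatible with composition, $[p]_{*}$ is the identity on $R^{2d}\pi_{*}\Lambda_{s}\simeq\Lambda_{s}(-d)$, and naturality of the trace in the coefficients yields compatibility with reduction modulo $p^{s-1}$; this gives the first assertion.

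Next I would compute $[p]_{*}$ on $R^{i}\pi_{*}\Lambda_{s}$ for $i<2d$. Working with the lisse torsion-free $\ZZ_{p}$-sheaf $R^{i}\pi_{*}\ZZ_{p}$, of which $R^{i}\pi_{*}\Lambda_{s}$ is the reduction modulo $p^{s}$, one has $[p]^{*}=p$ on $R^{1}\pi_{*}\ZZ_{p}$ and hence $[p]^{*}=p^{i}$ on $R^{i}\pi_{*}\ZZ_{p}\simeq\wedge^{i}R^{1}\pi_{*}\ZZ_{p}$; combined with $[p]_{*}[p]^{*}=\deg[p]=p^{2d}$ and torsion-freeness this forces $[p]_{*}=p^{2d-i}$ on $R^{i}\pi_{*}\ZZ_{p}$. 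Reducing modulo $p^{s}$, the $m$-fold composite of the transition maps --- namely $[p^{m}]_{*}=p^{m(2d-i)}$ --- vanishes once $m(2d-i)\geq s$. Since $2d-i\geq 1$, taking $r'\geq r+s$ makes $R^{i}\pi_{*}(\Lambda_{s}[A_{r'}])\to R^{i}\pi_{*}(\Lambda_{s}[A_{r}])$ the zero map, which is the second assertion.

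For the displayed formula I would feed this into Jannsen's Leray spectral sequence for continuous \'{e}tale cohomology,
$$ E_{2}^{p,q} = \mathrm{H}^{p}_{\et}\bigl(S, (R^{q}\pi_{*}(\Lambda_{r}[A_{r}](d)))_{r\geq 1}\bigr) \Longrightarrow \mathrm{H}^{p+q}_{\et}\bigl(A,\mathcal{L}(d)\bigr). $$
The $E_{2}$-terms vanish for $q>2d$ (relative cohomological dimension $2d$); for $q<2d$ the second assertion shows the pro-system $(R^{q}\pi_{*}(\Lambda_{r}[A_{r}](d)))_{r}$ is Mittag--Leffler zero, so its continuous cohomology vanishes; and for $q=2d$ the first assertion identifies it with $(\ZZ/p^{r}\ZZ)_{r\geq 1}=\ZZ_{p}$, giving $E_{2}^{p,2d}=\mathrm{H}^{p}_{\et}(S,\ZZ_{p})$. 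Hence the spectral sequence degenerates to $\mathrm{H}^{i}_{\et}(A,\mathcal{L}(d))\simeq\mathrm{H}^{i-2d}_{\et}(S,\ZZ_{p})$, which is $0$ for $i<2d$ and $\mathrm{H}^{0}_{\et}(S,\ZZ_{p})$ for $i=2d$.

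The genuinely delicate part is not any single computation but the bookkeeping: one must keep the two pro-directions straight (the variable $r$ with $s$ fixed used in the first two assertions, versus the pro-system $\mathcal{L}$ in which $s=r$), verify that the transition maps of $\mathcal{L}$ really do induce the Gysin pushforward along $[p]$ followed by reduction, and justify the form of the continuous-cohomology Leray spectral sequence in this pro-sheaf setting. These points are worked out in detail in \cite[\S 4--6]{Kings15}, which is the reference for this subsection.
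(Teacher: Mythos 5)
Your proposal is correct, and the argument you give is the natural one. The paper itself does not contain a proof of this proposition --- the preamble to \S \ref{momentmapsec} explicitly defers all proofs in that subsection to \cite[\S 4--6]{Kings15} --- but your reduction (identify $R\pi_{*}(\Lambda_{s}[A_{r}])$ with $R\pi_{*}\Lambda_{s}$, show the transition maps become the Gysin pushforward $[p]_{*}=p^{2d-i}$ on $R^{i}\pi_{*}$ using torsion-freeness of $R^{i}\pi_{*}\ZZ_{p}$, and feed this into the continuous Leray spectral sequence) is precisely the strategy Kings carries out there.
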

\begin{remark} By \cite[Proposition 1.6]{Jannsen1988} and  above, we have  $  \mathrm{H}^{i}_{\et}( A , \mathcal{L}(d)) = \varprojlim_ { r }  \mathrm{H}^{i}_{\et}(A ,  \Lambda_{r}[A_{r}]) $ where the limit involves  ordinary \'{e}tale cohomology groups 
(or continuous  groups with constant pro-systems $ \Lambda_{r}[ A_{r} ]$). 
\end{remark} 

Fix as before  an integer $ c  > 1 $ that is   invertible   on $ S $  but which is now also prime to $ p $. We retain   the      notations introduced in diagram   (\ref{geometricsituation}).  Repeating the same 
argument  as  in  \S     \ref{LogQpressection} and invoking Proposition \ref{vanishingcohomologyintegral},  we find an exact sequence        
 \begin{equation}   \label{integrallogresiduesequence}         0  \to   \mathrm{H}    ^{2d-1}_{\et}   \big (  U  ,   \mathcal{L}  ( d )  \big )   \xrightarrow{\mathrm{res}}  \mathrm{H}^{0}_{\et}   \big (D ,  \iota ^ {  *   }    \mathcal{L}   \big  )  \to   \mathrm{H}^{0}_{\et}(S,  \QQ_{p} )    
\end{equation}
where $ \mathrm{res} $ is again  referred to as the \emph{residue}   map.   By Corollary \ref{integralsplitcoro}, we  may replace $ \mathrm{H}^{0}_{\et} ( D   ,  \iota^{*}  \mathcal{L} ) $ with $ \mathrm{H}^{0}_{\et} \big ( D, \pi_{D}^{*}  \Lambda( \mathscr{H} )   \big )      $.   Since we have  a  section $ \mathbf{1} : \ZZ_{p} \to  \Lambda(\mathscr{H}) $ induced by sending 
$ 1 \in  \ZZ/p^{r} \ZZ $ to the identity in $ \Lambda_{r}[A_{r}] $, we have a corresponding  inclusion  $ \mathrm{H} ^{0}(D, \ZZ_{p}) \to   \mathrm{ H }    ^{0}_{\et}( D ,  \pi_{D}^{*} \Lambda  ( \mathscr{H} ) ) $.       Let   $  \ZZ_{p}  [ D ] ^ {0 } $ denote the kernel of the  trace  map $$  \mathrm{H}     ^ { 0 } _ { \et } ( D , \ZZ_{p} )  \to  \mathrm{H}    ^{0} _ { \et } ( S , \ZZ_{p} ) . $$  
Then $ \ZZ_{p}[D]^{0} $ lies in the   image of residue map  for the same reasons as in  \S  \ref{LogQpressection}.      Note  that   the class  $ \alpha _ { c }    $    of     (\ref{alphacchoice})      is a member of $ \ZZ_{p}  [ D ] ^ {0  } $. 
\begin{definition}  Let $ \alpha \in \ZZ _ { p } [ D ] ^{0} $.  The  \emph{integral \'{e}tale  polylogarithm with residue $ \alpha $}  is the unique  class  \vspace{-0.1em} $$ { }   _ {\alpha} \mathrm{pol} _{ \ZZ_{p}}  \in   \mathrm{  H } _{\et}^{2d-1}  \big  (    U, \mathcal{L}(d)   \big )  $$   \vspace{-0.1em}    
such that $ \mathrm{res} \big (  { } _ { \alpha }  \mathrm{pol} _{ \ZZ_{p}   }  )    =  \alpha $. 
\end{definition}   
Fix now an integer  $ N > 1 $ prime to $ c $ and  invertible on $ S $. Let $ t : S  \to U   $ be an $ N $-torsion section.   By  the adjunction $ \mathrm{id} \to R t_{*} t^{*}  $, isomorphism (\ref{fakesplitting}) and  Proposition  \ref{intsplitting}, we  obtain a composition 
\begin{align} 
  \mathrm{H}_{\et} ^{  2 d  - 1 }   \big  ( U    ,   \mathcal{L} ( d )   \big  )  \xrightarrow{t^{*} }   
\mathrm{H}^{2d-1}   _ { \et  }  \big  ( S ,  t^{*}  \mathcal{L}(d)  \big   )  \xrightarrow[\sim]{\varsigma_{t}} 
\mathrm{H}^{2d-1}   _ { \et }   \big  ( S ,          \Lambda  ( \mathscr{ H }    )     \langle  t   \rangle    (d)  \big  )  &   \xrightarrow{[N]_{\#}}   \mathrm{H}_{\et}^{2d-1} \big (S,  \Lambda (  \mathscr{H} )  (d)  \big ) .
    \label{cohomomentmap}       
\end{align}  

\begin{definition} The \emph{Iwasawa-Eisenstein class} $ {}_{\alpha} \mathcal{EI}_{N} ( t)  \in    \mathrm{H}_{\et} ^ { 2 d - 1 }  \big (  S , \Lambda   (  \mathscr{ H }   )  (d) \big ) $
\emph{associated with $ t $, $ N $ and $ \alpha $} is defined to be  image of  $   {      }  _   {  \alpha  }    \mathrm{pol}  _ { \ZZ_{p} } $ under the composition $ [N]_{\#}   \circ  \varsigma_{t}     \circ t^{*} $ of 
   (\ref{cohomomentmap}).          
\end{definition}

The following result of Kings shows that the classes   just  defined 
interpolate  rational Eisenstein classes.  Let $ \mathrm{mom}^{k}_{\QQ_{p}} $ denote the composition 
\begin{align}    \notag    \mathrm{H}_{\et} ^ { 2 d - 1 }   \big (S, \Lambda(\mathscr{H}) (d)    \big  )  \xrightarrow{ \mathrm{mom}^{k} }    \mathrm{H}^{2d-1}_{\et}  \big  ( S , \Gamma_{k}  ( \mathscr{H} )     ( d    )       \big    )   \xrightarrow {   -  \otimes \QQ_{p} }& \mathrm{H}^{2d-1}_{\et}   \big   ( S , \Gamma_{k} ( \mathscr{H} )    (d)   \big   )  \otimes _ { \ZZ_{p}}   \QQ_  { p }  
\\   \xrightarrow[\sim]{  \sigma  _{k }  }    &   \mathrm{H}^{2d-1}_{\et} \big  ( S ,   \mathrm{Sym}^{k}  \mathscr{H}_{\QQ_{p}}  (d )    \big )  \notag  
\end{align}  
where the first map is induced  by the moment map of Definition \ref{momentmapdefi} and   $  \sigma_{k}   $  is  
induced  by  (\ref{canonicalisogaammaH}). 
\begin{theorem}[{\cite[Theorem 6.3.3]{Kings15}}]   \label{Kingsmain}           For $ \alpha \in \ZZ_{p} [D]^{0} $ and $ N > 1 $ an integer relatively  prime to $ c $,   the   $k$-th   rational Eisenstein  class $ {}_{\alpha} \mathrm{Eis}_{\QQ_{p}}^{k}(t) $  along   a  non-zero  $ N $-torsion section   $  t  :   S \to A   $   satisfies $$ \mathrm{mom}^{k}_{\QQ_{p}}  \big (  {}_{\alpha}  \mathcal{EI}_{N}(t)  \big )   =   N  ^  { k   }  {}_{\alpha}  \mathrm{Eis}_{\QQ_{p}} ^{k} (t)  .  $$
In particular, $ N^{k} {}_{c}  \mathrm{Eis}^{k}_{\QQ_{p}}(t) 
$ lies in the image of $ \mathrm{H}^{2d-1}_{\et}   \big   (S, \Gamma_{k}(\mathscr{H}  )   ( d   )      \big )  $ under 
$ \sigma_{k} \circ  ( - \otimes  \QQ_{p} )   $.
\end{theorem}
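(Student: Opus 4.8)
The plan is to deduce the formula from a comparison, after inverting $p$, between the integral logarithm sheaf $\mathcal{L}$ and the $\QQ_p$-logarithm prosheaf $\Log_{\QQ_p}$, tracking the polylogarithm class and the various splitting maps through it. First I would construct a base-change compatible isomorphism of prosheaves $\mathcal{L}\otimes\QQ_p\xrightarrow{\ \sim\ }\varprojlim_k\Log^{(k)}_{\QQ_p}$ on $A$ with two normalizations: on $e^*$ it identifies $\Lambda(\mathscr{H})\otimes\QQ_p$ with $\prod_{i\geq 0}\mathrm{Sym}^i\mathscr{H}_{\QQ_p}$ in such a way that $\sigma_k\circ(\mathrm{mom}^k\otimes\QQ_p)$ becomes the projection $\pr^k$ onto the $i=k$ summand of \eqref{unipotence}; and it carries the transition maps of $\mathcal{L}$ to the maps $u^k$ of \S\ref{Qppolylogsection}. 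Granting this, applying the localization-triangle formalism of \S\ref{LogQpressection} simultaneously to $\mathcal{L}(d)$ and to the $\Log^{(k)}_{\QQ_p}(d)$ yields a morphism from the residue sequence \eqref{integrallogresiduesequence} to the inverse limit over $k$ of the sequences \eqref{longexactlogsequence}, and this morphism is the identity on $\mathrm{H}^0_{\et}(D,\ZZ_p)\otimes\QQ_p\subset\mathrm{H}^0_{\et}(D,\pi_D^*\Lambda(\mathscr{H}))\otimes\QQ_p$, i.e.\ on residues.

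Next I would transport the polylogarithm: since ${}_\alpha\mathrm{pol}_{\ZZ_p}$ and the system $({}_\alpha\mathrm{pol}^k_{\QQ_p})_k$ are each characterized by having residue $\alpha$, the comparison of residue sequences forces ${}_\alpha\mathrm{pol}_{\ZZ_p}$ to be sent to $({}_\alpha\mathrm{pol}^k_{\QQ_p})_k$ in $\varprojlim_k\mathrm{H}^{2d-1}_{\et}(U,\Log^{(k)}_{U,\QQ_p}(d))$. It then remains to match the two recipes producing Eisenstein classes from the polylogarithm along the $N$-torsion section $t$: on the rational side one applies $\pr^k\circ\varrho^k_t\circ t^*$, whereas $\mathrm{mom}^k_{\QQ_p}$ applied to the Iwasawa--Eisenstein class is $\sigma_k\circ(\mathrm{mom}^k\otimes\QQ_p)\circ[N]_\#\circ\varsigma_t\circ t^*$. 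The only discrepancy is that $[N]_\#$ is the splitting of Proposition \ref{intsplitting} built from the isogeny $[N]\colon A\to A$ (which kills $t$), whereas $\varrho^k_t$ of Corollary \ref{splittingprinciple} is normalized against $e^*$ via $(e^*\varphi_\#)^{-1}$, precisely cancelling the action of the chosen isogeny on graded pieces. Since $[N]$ acts on $\mathscr{H}$ by multiplication by $N$, hence on $\mathrm{Sym}^k\mathscr{H}_{\QQ_p}$ by $N^k$ (Proposition \ref{functoriality}), the two recipes differ by the scalar $N^k$ on the $k$-th symmetric power component. Chasing this through gives $\mathrm{mom}^k_{\QQ_p}({}_\alpha\mathcal{EI}_N(t))=N^k\,{}_\alpha\mathrm{Eis}^k_{\QQ_p}(t)$; taking $\alpha=\alpha_c\in\ZZ_p[D]^0$ and noting that $\mathrm{mom}^k({}_{\alpha}\mathcal{EI}_N(t))$ already lies in the integral group $\mathrm{H}^{2d-1}_{\et}(S,\Gamma_k(\mathscr{H})(d))$ yields the final assertion.

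The hard part is the first step: constructing the comparison $\mathcal{L}\otimes\QQ_p\simeq\varprojlim_k\Log^{(k)}_{\QQ_p}$ with the stated $e^*$-normalization and base-change compatibility, and — what amounts to the same point — pinning down that the discrepancy between $[N]_\#$ and $\varrho^k_t$ is exactly $N^k$ and not some other power or multiple of $N$. This forces an explicit local computation on the finite \'{e}tale torsors $X_r\langle t\rangle$ of \eqref{lambdaHrt}: one must express $\varsigma_t$ and $[N]_\#$ in terms of the tautological sections $\tau_r$, compute the effect of translation by (a lift of) the $N$-torsion point $t$ and of pushforward along $[N]$ on their divided powers $\tau_r^{[k]}$, and reconcile the outcome with $\mathrm{mom}^k_r$ of \eqref{momentkr}. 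Once this bookkeeping is done, everything else is formal: compatibility of localization triangles, the adjunctions, and uniqueness of polylogarithms with respect to residues, exactly as in \S\ref{LogQpressection}--\S\ref{distributioneisenstein}. A complete argument along these lines is carried out by Kings in \cite[\S4--6]{Kings15}.
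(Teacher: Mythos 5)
The paper does not prove this statement: Theorem \ref{Kingsmain} is imported directly from Kings \cite[Theorem~6.3.3]{Kings15}, and the text just before it (``For proofs, we refer the reader to \cite[\S4-6]{Kings15}'') makes clear that no in-paper argument is offered. There is therefore nothing to compare your attempt against except Kings' original proof. Your sketch is an accurate reconstruction of that argument: one builds a base-change compatible comparison $\mathcal{L}\otimes\QQ_p\simeq\Log_{\QQ_p}$ normalized so that $\sigma_k\circ(\mathrm{mom}^k\otimes\QQ_p)$ agrees with the projection $\pr^k$ under the $e^*$-splitting of \eqref{unipotence}; uniqueness of classes with prescribed residue then transports ${}_\alpha\mathrm{pol}_{\ZZ_p}$ to $({}_\alpha\mathrm{pol}^k_{\QQ_p})_k$; and the factor $N^k$ is exactly the mismatch between the normalization $(e^*\varphi_\#)^{-1}$ built into $\varrho^k_t$ (Corollary \ref{splittingprinciple}) and the integral splitting $[N]_\#$ of Proposition \ref{intsplitting}, given that $[N]_*$ acts by $N$ on $\mathscr{H}$ and hence by $N^k$ on $\mathrm{Sym}^k\mathscr{H}_{\QQ_p}$ via Proposition \ref{functoriality}. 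You are also right to flag the non-formal core as the bookkeeping on the torsors $X_r\langle t\rangle$ relating $\varsigma_t$, $[N]_\#$, and the divided powers $\tau_r^{[k]}$ against $\mathrm{mom}^k_r$; that is precisely the content of \cite[\S 4--6]{Kings15}.
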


\section{Siegel modular varieties}  
\label{Siegelsec}    

In this section, we recall the definition and 
moduli interpretation  of Siegel modular varieties  with 
principal   level  structures.  Since Eisenstein classes of Definition  \ref{EisdefiQp} are apriori only defined for   scheme theoretic sections, we need to spell out the effect of various  maps between moduli varieties in a non-adelic  fashion. 
\subsection{The  Shimura  data}      
\label{Siegelreview}  For $ n \geq 1 $ an integer,   let  $ I_{n} $ denote the $ n \times n $ identity matrix. Let  $$ 
J = J_{n} = \begin{pmatrix} & I_{n} \\ -  I_{n}  & \end{pmatrix} \in \mathrm{Mat}_{2n  \times  2n }(\ZZ) $$
be    the standard symplectic matrix and 
$ \mathbf{G} = \mathrm{GSp}_{2n} $ denote the reductive group scheme over $ \ZZ $ whose $ R $ points for a ring $ R $  are given by $ \mathbf{G}(R) = \left  \{  g \in  \GL_{2g}(R) \, | \,  g ^{t}  J g = c(g) J \text { for } c(g) \in R^{\times}  \right \}  $. The induced  homomorphism $ c : \mathrm{GSp}_{2g} \to \GG_{m} $ is called the  \emph{similitude}.  The center of $ \Gb $ is denoted by $ \mathbf{Z} $ which is identified with $ \GG_{m} $ via diagonal matrices.   
Let $ \delT := \mathrm{Res}_{\CC/\RR} \GG_{m}  $ denote the Deligne torus and define     
\begin{align*} 
h _ { \mathrm{std} } :  \delT \to \mathbf{G}_{\RR}  \quad \quad 
(a+b \sqrt{-1}) \mapsto   \begin{pmatrix} a I_{n} & bI_{n} \\   -b I_{n}  &  a I_{n}   \end{pmatrix}.
\end{align*}   
Let  $  \mathcal{X}  $    denote the $ \Gb(\RR) $-conjugacy class of $ h_{\mathrm{std}} $. Then  $ (\mathbf{G}_{\QQ}, \mathcal{X})$  satisfies axioms SV1-SV6 of \cite{MilneShimura} and in particular,  constitutes a Shimura datum. 
Its    reflex field is $ \QQ $. For a neat compact open subgroup $ K \subset \Gb(\Ab_{f} ) $, let $  \mathrm{Sh}(K  )  =  \mathrm{Sh}_{\Gb}(  \mathcal{X}   ,  K)     $ denote the  corresponding canonical model over $ \QQ $. It is a smooth  quasi-projective     variety of dimension $ n(n+1)/2   $   whose  $ \CC $-points are  identified  with   the double quotient $ \Gb(\QQ) \backslash [\mathcal{X}  \times  \Gb(\Ab_{f}) ] / K  $.     

Let $ (V_{\ZZ}, \psi) $ denote the standard symplectic $ \ZZ$-module where $ V_{\ZZ} : = \ZZ^{2n} $ and $ \psi : V _ { \ZZ  }  \times V _ { \ZZ  }   
  \to  \ZZ    $ is the pairing induced by $ J $.  We let $ e_{1}, \ldots, e_{2n} $ denote the standard basis of $ V_{\ZZ} $.  
For any commutative ring $ R $ with unity, we denote $ V_{R}    :=  V_{\ZZ} \otimes_{\ZZ}  R $ and  $ V_{\mathbb{A}_{f}} $ 
simply as $ V_{f} $.       We will view elements of $ V_{R} $ as column vectors and let $ \mathbf{G}(R)  $ act on $ V  _ { R   }      $ by    left matrix multiplication.  For each positive integer $ N $, let  $$ K_{N}  : =   \left \{ g \in \Gb(\Ab_{f} ) \, | \, ( g - 1 ) V_{\widehat{\ZZ}} \subset N V_{\widehat{\ZZ} } \right \} $$ 
be the   \emph{principal  congruence  subgroup} of $ \Gb(\Ab_{f}) $ of level $ N $. These   subgroups      form a  base for the topology of $ \Gb(\Ab_{f} )$ at identity and $ K_{N} \trianglelefteq       K_{1} $ for all $ N \geq  1 $.      The quotient $ \Gb(\Ab_{f})/K_{1} $ is identified with the set of self-dual $ \widehat{\ZZ} $-lattices in $ V_{f}     $ by identifying the coset  $ gK_{1} \in \Gb(\Ab_{f})/K_{1} $ with the lattice  $g V_{\widehat{\ZZ}} $.     More   generally,    $ \Gb(\Ab_{f})/K_{N} $ is identified  with the set of pairs $ (\widehat{H}, \bar{\eta}) $ where $ \widehat{H}  \subset V_{f}     $ is a self-dual  $ \widehat{\ZZ}$-lattice and 
$ \bar{\eta}  : V_{\ZZ}/ N V_{\ZZ}  \xrightarrow{\sim}  \widehat{H} / N \widehat{H} $ is a choice of a  symplectic isomorphism. 

\subsection{Moduli interpretation}     \label{moduli}     For $ N \geq 3 $, $ \mathrm{Sh}(K_{N}) $ is the $ \QQ $-scheme representing the following moduli problem. Let $ \mathbf{Sch} _ { \QQ   } $ denote the category of  locally  Noetherian $ \QQ $-schemes and let $ \mathfrak{M}_{N} : \mathbf{Sch}  _ { \QQ  }    \to \mathbf{Sets} $ 
denote the   contravariant    functor that sends a $ \QQ $-scheme $ S $  to   isomorphism  classes  of    triples $ (A, \lambda,  \eta ) $ where 
\begin{itemize}  
\item $ A $ is an abelian scheme over $ S $ of  relative dimension $ n $, 
\item $ \lambda : A \xrightarrow{\sim} A^{\vee} $ is a principal polarization,    
\item $ \eta :  ( V_{\ZZ} / N  V _{\ZZ} ) _ { S  }  \xrightarrow{\sim}  A[N]   $ is a symplectic similitude of group schemes over $ S $, i.e.,      $ \eta $ is an isomorphism     of $S$-group schemes such that  the Weil pairing on $ A[ N   ]     $ corresponds to a  $ ( \ZZ / N \ZZ ) ^{ \times } $-multiple of the pairing $ \psi $ after fixing an identification of $ ( \ZZ / N \ZZ )^{\times} $ with the roots of unity $ \mu_{N} $. 
\end{itemize}
The isomorphism $ \eta $ is also referred to as a \emph{principal level $  N   $ structure}.  Given a morphism $ f : T \to S $ of schemes, the morphism $  \mathfrak{M}_{N}(S) \to  \mathfrak{M}_{N}(T)  $   is given by pullback  of families.  To say that $ \mathfrak{M}_{N} $ is represented by $ \mathrm{Sh}(K_{N}) $ is to say  that  there exists a natural   isomorphism  $$  \Psi_{N} :  \mathfrak{M}_{N} \to \mathrm{Hom}_{\mathbf{Sch}_{\QQ}}( - , \mathrm{Sh}(K_{N} ) ) $$ of  functors on $ \mathbf{Sch}_{\QQ}   $.    Abusing notation,  we denote by $ \mathrm{id}_{N} \in  \mathfrak{M}_{N}(\mathrm{Sh}(K_{N})) $ the isomorphism class that corresponds to the identity map $  \mathrm{id}_{N} :   \mathrm{Sh}(K_{N}) \to \mathrm{Sh}(K_{N}) $.  
By definition,  there is   an abelian scheme $  \pi_{N  }    : 
\mathcal{A}_{N} \to \mathrm{Sh}(K_{N}) $ with   principal          polarization $ \lambda_{N}^{\mathrm{univ}}  $ and  principal level $ N $-structure $ \eta_{N}^{\mathrm{univ} } $ such that the isomorphism  class of $ ( \mathcal{A}_{N} ,  \lambda_{N}^{\mathrm{univ}} ,  \eta_{N}^{\mathrm{univ}} ) $ equals  $ \mathrm{id}_{N}  $.      It is   referred to as the   \emph{universal family} on $ \mathrm{Sh}(K_{N}) $. We fix such a family once and for all for each $ N \geq 3 $. For $ v \in V_{\widehat{\ZZ}}$, we let $$ t_{v, N} : \mathrm{Sh}(K_{N}   ) \to  \mathcal{A}_{N} $$ 
denote the canonical  $ N$-torsion section that corresponds under $ \eta_{N}^{\mathrm{univ}} $ to the class of $ v $ in $   V_{\widehat{\ZZ}} /  N V_{\widehat{\ZZ}} =  V_{\ZZ} / N  V_{\ZZ}  $. 

Let $ M , N \geq 3 $ be integers such that $ M  | N   $ and let $ g \in \Gb(\Ab_{f}) $ be an element such that $ K_{N} \subset  g K_{M} g^{-1} $ that we fix for this paragraph.   There  is   a      finite  \'{e}tale  map $ [g]_{N,M} : \mathrm{Sh}(K_{N}) \to \mathrm{Sh}(K_{M}) $ of $ \QQ$-schemes     given on complex  points by right multiplication by $ g  $   in  the   second  component.  It induces a  natural  transformation  $ \mathrm{Hom}_{\mathbf{Sch}_{\QQ}} ( - , \mathrm{Sh}(K_{N}))  \to  \mathrm{Hom}  _ { \mathbf{Sch} _ { \QQ }   }    ( - , \mathrm{Sh}(K_{M})) $, corresponding to which there is a  natural transformation $$ {} ^ {g}  \Phi_{N, M}  : \mathfrak{M}_{N} \to  \mathfrak{M}_{M} .  $$
We  explicitly  describe the effect of   ${}^{g}\Phi_{N,M} $ 
for certain $ g $, $ N $, $ M $ (cf.\  \cite[p.9]{Laumon}).  
Assume that $ g $ is such that 
 $   g  (d  V _{\widehat{\ZZ}})     \subset   V_{\widehat{\ZZ}}  \subset   g   V_{\widehat{\ZZ}}        $   where $  d :  = N/M  $. 
 Let     $ \iota : V_{\ZZ} / M  V _{\ZZ}  \hookrightarrow  V _{\widehat{\ZZ}} / g (  N   V    _{\widehat{ \ZZ}}) $ be the  inclusion given by  
$$   \iota :   V _{\ZZ} / M V _{\ZZ}  = d V _{\widehat{\ZZ}} / N V _{   \widehat{ \ZZ }   }  \xrightarrow[\sim]{   g  \,    \cdot  }    g (d V _{\widehat{\ZZ}})    / g  (  N V_{\widehat{\ZZ}} )           \hookrightarrow    V _ { \widehat{ \ZZ } } /  g  (  N   V   _{ \widehat{\ZZ}} ) $$ 
and let   $ \gamma    :  V _{\ZZ}/N  V  _{\ZZ} \to V _{\ZZ}  / N   V  _{\ZZ}  $  be   the   symplectic     endomorphism  induced by  $ v \mapsto g^{-1}  v $ for $ v \in V_{\widehat{\ZZ}}   $ (which 
is well-defined    since $ g^{-1} V_{\widehat{\ZZ}} $ is contained in $ V_{\widehat{\ZZ}} $).       Note  that    $$ \ker \gamma =   g ( N   V _{\widehat{\ZZ}} )  / N V _{\widehat{\ZZ}} \quad   \text{ and } \quad  ( V_{\ZZ}     /N  V_{\ZZ})/  \ker \gamma    \simeq     V _{\widehat{\ZZ}}/  g ( N  V  _ {\widehat{\ZZ}} ) .   $$ 
Thus $ V_{\ZZ} / M V_{\ZZ} $ embeds into the coimage of $ \gamma $ via  $ \iota $.  
We note  that   $ \ker   \gamma     $ is    a   totally     isotropic   subspace of $ V _{\ZZ} / N  V _{\ZZ}  $ with respect to the induced symplectic pairing and its cardinality equals $ k^{n}$ for some positive integer $ k = k_{\gamma} $. Now let  $ S \in \mathbf{Sch}_{\QQ} $ and     $  (  A_{N} ,   \lambda_{N}  ,  \eta_{N} ) \in  \mathfrak{M}_{N}(S)   $ be (a triple representing) an isomorphism class. Let    $ C_{\gamma}  $ be the finite flat subgroup scheme of $ A_{N} $ (over $S$) given by the image of $ ( \ker \gamma )_{S} $ under $ \eta_{N} $ and let $ \bar{\eta}_{N} :  V_{\widehat{\ZZ}} / g (NV_{\widehat{\ZZ}})_{S} \to A_{N}/C_{\gamma}  $ be  the  embedding  on  quotients   induced by $ \eta_{N} $. By $ \iota_{S} $, we denote  the morphism of constant group schemes over  $ S $ determined  by    $ \iota $.   
Consider the triple     $ ({}^{g}A_{M} , {}^{g}\lambda_{M}, {}^{g}\eta_{M} )  $  where   
\begin{itemize} 
\item  $ {}^{g} A_{M} $ is obtained by the quotient map   $   \psi_{\gamma} :  A_{N} \to A_{N}    /   C  _  {  \gamma  } $, 
\item $ {}^{g}  \lambda_{M } : {}^{g}A_{M} 
\to {}^{g}A^{\vee} $ is the unique   principal   polarization  satisfying $ [k] \circ \lambda_{N}  =  \psi_{\gamma}^{\vee}  \circ  {} ^{g} \lambda_{M}  
\circ   \psi_{\gamma} $  (see \cite[Proposition 13.8]{abvar}), 
\item  $   ^{g} \eta_{M}   : (V_{\ZZ}/ M V_{\ZZ})_{S} \xrightarrow{\sim}   { } ^ { g }     A_{M}  [M]  $ is given by the composition $ 
   \bar{\eta}_{N}  \circ \iota_{S}  $. 
\end{itemize}  
Then   $ (    {  } ^ { g}    A_{M } ,  { } ^ { g }   \lambda_{M} ,  {  } ^ { g }    \eta_{M} ) $  represents  the 
   isomorphism   class    
$ {}^{g} \Phi_{N,M}( A_{N} , \lambda_{N} , \eta_{N} )  \in   \mathfrak{M}_{M}(   S    )   $. 

It will be useful to note a few special cases  of the aforementioned description. First, if $ g $ is identity, the corresponding map on triples is given by ``forgetting"   the level $ N $ structure, i.e., by restricting $ \eta_{N} $ to $ dV_{\ZZ}/ N V_{\ZZ} $.  
Second, if $ g = \kappa \in K_{1} $ and $ N = M $,  the class of a triple $ (A_{N}, \lambda_{N}, \eta_{N} $) is sent to the class of $ ( A_{N} , \lambda_{N}, \eta_{N}   \circ \kappa   
)   $. This induces a right action of $K _{1}  = \mathrm{GSp}_{2n}(\widehat{\ZZ})$ on $ \mathfrak{M}_{N}(S) $. Third,     if $ g $ is an element of the center $ \mathbf{Z}(\QQ) $ (satisfying the conditions of the discussion),  the isogeny $  \psi   _  {\gamma}  :  A_{N} \to {}^{g}    A_{M} $ factors via an isomorphism $ A_{N} \simeq { } ^ { g} A_{M }     $ since  $     C_{\gamma}  $ is the kernel of $ [k] : A_{N} \to A_{N} $ and  $ A_{N}/\ker [k] \simeq A_{N} $.  Using this,  we see  that the map    $ {}^{g}\Phi_{N,M} $ for $ g \in \mathbf{Z}(\QQ) $  is again the forgetful one. This can also be seen directly by the complex uniformization  of these varieties.

We  apply the above description to  universal families and record some observations.       For an arbitrary  $ g \in \Gb(\Ab_{f})   $ satisfying $ K_{N} \subset g K_{M} g^{-1}   $,  
there is    a triple $$({}^{g} \mathcal{A}_{M} , {}^{g} \lambda_{M} ,   { } ^ { g} \eta_{M}) \in  \mathfrak{M}_{M}( \mathrm{Sh}(K_{N}) ) $$ 
corresponding  to $ [g]_{N,M}  \in   \mathrm{Hom}_{\mathbf{Sch}_{\QQ}}(\mathrm{Sh}(K_{N}),  \mathrm{Sh}(K_{M})  ) $ under $ \Psi_{M}    $. By definition, this triple   
is obtained by pulling back the universal family on $ \mathrm{Sh}(K_{M}) $ along $ [g]_{N,M}   $.   But $
 [g]_{N,M}  =   \Psi_{M}    \circ   {}^{g}  \Phi _{N , M }  ( \mathrm{id}_{N} ) $.  The preceding  discussion implies  that  
when $  g  \in  \Gb(\Ab_{f})     $ is such that    
$ ( d    V  _{\widehat{\ZZ}}  )   \subset   V_{\widehat{\ZZ} }  \subset   g  V _{\widehat{\ZZ}}   $,   
there is an isogeny \begin{equation}  \label{univisog}  {}^{g}  \psi_{N, M} ^ { }   :  \mathcal{A} _ { N }   \to  {}^{g} \mathcal{A}  _ { M }   
\end{equation} 
over $ \mathrm{Sh}(K_{N}) $   such  that the   tautological  section $ [g]_{N,M} ^{* }  ( t_{v,M} ) : \Sh(K_{N}) \to {}^{g} \mathcal{A}_{M} $  induced by  
$ t_{v,M} $ 
is   $ {}^{g} \psi_{N,M} \circ t_{gdv, N}$. When $ g = \kappa \in K_{1} $ in  particular (so that $K _{N} \subset K_{M} $),    the  discussion above gives us a  pullback diagram 
\begin{equation}   \label{pullbackdiagram}    
    \begin{tikzcd}[sep = large]    
    \mathcal{A}_{N}   \arrow[r]   \arrow[d,  "\pi_{N}"'   
    ]       &   \mathcal{A}_  { M  }    \arrow[d, "{\pi_{M}}"]        \\ 
    \mathrm{Sh}(K_{N})   \arrow[r , "    \kappa _ { N , M }  " ]  &   \mathrm{Sh}(K_{M})    
    \end{tikzcd}   
\end{equation}  
and   the   tautological section of $ \pi_{N} $ induced by $  t_{v, M} $ for $  v \in  V  _ { \widehat{\ZZ}   } $ 
equals $  t_{\kappa dv , N}   $. 
 
\begin{remark}   \label{twistsimplification}     
We note that a general $ g  \in \Gb(\Ab_{f}) $ can be written as $ z_{a}   h $  where  $  h \in \Gb(\Ab_{f}) $ satisfies $    V _{\widehat{\ZZ}} \subset h     V  _{\widehat{\ZZ} }   $ and  
$ z_{a} \in \mathbf{Z}(\QQ) \simeq  \QQ^{\times} $ is identified with a  positive integer $ a $.   Given $ M \geq 3 $, we can find $ N $ a sufficiently large  multiple of $ M $ such that 
$  h  (N/M) V_{\widehat{\ZZ}}      \subset V _{\widehat{\ZZ}} $.   %
Since $ z_{a} : \mathrm{Sh}(K_{N}) \to \mathrm{Sh}(K_{N}) $ is the identify morphism, the effect of $ {}^{g} \Phi_{N,M} $ can be  described by $ {}^{h}   \Phi_{N, M } $. 
\end{remark}

\section{Parametrization} 
\label{parametrization}    

In this section, we prove our main result in    Theorem \ref{mainpararesult}.    We first need some  preliminaries    however.  

\subsection{Symplectic Orbits} For this   subsection,   we use the 
 notations  introduced  in      \S \ref{Siegelreview}. In particular, $ \Gb = \mathrm{GSp}_{2n} $ denotes the symplectic $\ZZ$-group  scheme of rank $ n $.          
\begin{lemma}   \label{GZorbits}  Let $ R $ be a Euclidean domain.   The map that sends an ideal of $ R $  to the  $ \Gb(R) $-orbit of $ \alpha e_{1} \in  V_{R} $ for $ \alpha  $ a  generator of the ideal establishes  a   bijection between the set of  ideals of $ R $ and    the   orbit  space      $ \Gb(R) \backslash V_{R} $. 
\end{lemma}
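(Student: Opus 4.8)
The plan is to show that the content map, sending $v=(v_1,\dots,v_{2n})\in V_R$ to the ideal $\mathrm{cont}(v):=(v_1,\dots,v_{2n})\subseteq R$, descends to a well-defined map $\mathbf{G}(R)\backslash V_R\to\{\text{ideals of }R\}$ which is a two-sided inverse to the assignment in the statement. Since a Euclidean domain is a PID, every ideal has the form $(\alpha)$ and $\mathrm{cont}(\alpha e_1)=(\alpha)$, so it suffices to verify three things: (i) $\mathrm{cont}$ is constant on $\mathbf{G}(R)$-orbits; (ii) the orbit $\mathbf{G}(R)\cdot(\alpha e_1)$ depends only on the ideal $(\alpha)$, not on the chosen generator $\alpha$; and (iii) every $v\in V_R$ lies in the orbit of $\alpha e_1$ whenever $\alpha$ generates $\mathrm{cont}(v)$ (the case $v=0$, $\mathrm{cont}(v)=(0)$, being trivial). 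Granting (i)--(iii), the map $\{\text{ideals}\}\to\mathbf{G}(R)\backslash V_R$ of the statement is well defined by (ii), injective by (ii) and (i), and surjective by (iii), hence bijective.

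Part (i) is immediate from the defining inclusion $\mathbf{G}(R)\subseteq\mathrm{GL}_{2n}(R)$: for $g\in\mathrm{GL}_{2n}(R)$ both $g$ and $g^{-1}$ have entries in $R$, so $\mathrm{cont}(gv)\subseteq\mathrm{cont}(v)=\mathrm{cont}(g^{-1}(gv))\subseteq\mathrm{cont}(gv)$. For (ii), if $(\alpha)=(\alpha')$ then $\alpha'=u\alpha$ with $u\in R^\times$, and the diagonal matrix $\mathrm{diag}(u,\dots,u,1,\dots,1)$ with $u$ in the first $n$ slots lies in $\mathbf{G}(R)$ with similitude $c=u$ and carries $\alpha e_1$ to $\alpha' e_1$; thus the two orbits coincide.

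The substance is (iii). Factoring out a generator $\alpha$ of $\mathrm{cont}(v)$ and writing $v=\alpha v_0$ with $v_0$ unimodular, i.e. $\mathrm{cont}(v_0)=R$, it reduces to the claim that the subgroup $\mathrm{Sp}_{2n}(R)=\ker c\subseteq\mathbf{G}(R)$ carries $e_1$ to every unimodular vector; then $v=\alpha v_0=g(\alpha e_1)$ for the appropriate $g$. I would prove this via the existence of symplectic bases. Given unimodular $v_0$, the functional $\psi(v_0,-)\colon V_R\to R$ has matrix $v_0^{t}J$ in the standard coordinates, and since $J\in\mathrm{GL}_{2n}(R)$ the ideal generated by its entries equals $\mathrm{cont}(v_0)=R$; hence $\psi(v_0,-)$ is surjective and there is $w_0$ with $\psi(v_0,w_0)=1$. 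Then $P:=Rv_0\oplus Rw_0$ is a free rank-$2$ direct summand with $V_R=P\perp P^{\perp}$, the complement $P^{\perp}\cong V_R/P$ is projective of rank $2n-2$ hence free (over the PID $R$), and its induced alternating form is again unimodular because the Gram matrix of $\psi$ is block diagonal with the invertible block $\bigl(\begin{smallmatrix}0&1\\-1&0\end{smallmatrix}\bigr)$ on $P$. Induction on $n$ supplies a symplectic basis of $P^{\perp}$, which together with $(v_0,w_0)$ gives a symplectic basis of $V_R$ beginning with $v_0$; the base-change matrix from the standard symplectic basis $(e_1,\dots,e_n,e_{n+1},\dots,e_{2n})$ to it lies in $\mathrm{Sp}_{2n}(R)$ and sends $e_1$ to $v_0$.

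The main obstacle is exactly this last step — the symplectic-basis induction, resting on the PID facts that finitely generated projective modules are free and that a hyperbolic plane splits off orthogonally once a unimodular vector with a dual partner is found; everything else in (i)--(iii) is bookkeeping. Assembling: for arbitrary $[v]\in\mathbf{G}(R)\backslash V_R$, set $I=\mathrm{cont}(v)=(\alpha)$, write $v=\alpha v_0$ and pick $g\in\mathrm{Sp}_{2n}(R)$ with $ge_1=v_0$, so $v=g(\alpha e_1)$ and $[v]$ is the image of $I$; this is the required surjectivity, and the proof is complete.
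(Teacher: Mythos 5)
Your proof is correct but takes a genuinely different route from the paper's. The paper proceeds by explicit elementary matrices: it introduces the transvections $A_{i,n+i}(x)$ (adding $x$ times the $(n+i)$-th coordinate to the $i$-th), the swaps $B_{i,n+i}$, and the block matrices $\begin{pmatrix} A & \\ & (A^t)^{-1} \end{pmatrix}$ for $A\in\GL_n(R)$, and runs the Euclidean algorithm pair by pair to steer any $v$ directly to $\alpha e_1$ where $(\alpha) = \mathrm{cont}(v)$. You instead argue structurally: after factoring off the content and reducing to transitivity of $\mathrm{Sp}_{2n}(R)$ on unimodular vectors, you split off the hyperbolic plane $P = R v_0 \oplus R w_0$ once a dual partner $w_0$ has been found, note $P^\perp$ is free and carries a unimodular alternating form, and induct on $n$ to produce a symplectic basis beginning with $v_0$. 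Your approach buys somewhat greater generality and conceptual economy --- it works over any PID without appealing to the Euclidean algorithm, only to the facts that finitely generated projectives over a PID are free and that unimodular hyperbolic planes split off orthogonally --- while the paper's approach is more elementary and exhibits the transforming matrix as an explicit product of generators, which is arguably more transparent for the local case $R=\ZZ_\ell$ used in the sequel. Both arguments are sound; the identification of the inverse map as the content and the verifications (i) and (ii) match on the two sides.
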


\begin{proof} Pick a $ v \in V_{R} $.  Write $ v =  a_{1} e_{1} + \ldots + a_{2n} e_{2n}       $ and let  $ \alpha \in R  $ be a generator of the ideal generated by $  a_{1}, \ldots, a_{n} $.   For any $ x \in R $ and $ 1 \leq i \leq n $, let $ A_{i,n+i}(x) \in \mathrm{Mat}_{2n \times 2n}(R) $ be the matrix that has $ 1 $'s on the diagonal, $ x $ in the $ (i,n+i) $ entry and $ 0 $'s elsewhere. Then $ A_{i,n+i}(x) \in \Gb(R) $ and the action of $ A_{i, n+i}(x) $ on $ v  $     replaces the $ i $-th coordinate    with $ a_{i} + x a_{n+i}  $ while keeping everything else the same.   The matrices $ B_{i,n+i} $ obtained by switching $ i $-th and $ (n+i) $-th row of the $ 2n \times 2n $ identity matrix also  lie in $ \Gb(R) $ and their action on $ v $ is given by switching  the $ i $-th and $ (n+i) $-th coordinates. By using these matrices,       we     can apply the Euclidean algorithm to  replace the $i$-th coordinate of $ v $ with a generator for $ (a_{i}, a_{n+i}) $ for all  $ 1 \leq i \leq n $ and make the remaining entries  of $ v $ equal to    zero. Since $ \Gb(R) $ also contains matrices of the form $$ \begin{pmatrix} A &  \\ & (A^{t})^{-1} \end{pmatrix} $$ 
for any $ A \in \GL_{n}(R) $, we easily deduce that $ \alpha e_{1} \in   
\mathbf{G}(R) v $. Clearly $ \alpha_{1} e_{1} , \alpha_{2} e_{1} \in V_{R}  $ are in the same $ \mathbf{G}(R) $-orbit only if $ \alpha_{1}$ and $ \alpha_{2}$ generate the same ideal.    
\end{proof}  

For  $ R $ as above and $ v \in V_{R} $, let $I_{v} \subset R $ denote the ideal generated by its standard coordinates.  For  $ I $ an ideal of $ R $, let $ K_{R, I} \subset \Gb(R) $ the subgroup of elements $ \gamma $ such that $ \gamma v  \in v +  I    V_{R} $ for all $ v \in V_{R} $.

\begin{lemma}   \label{KIorbits} Suppose that  $ R $ is a discrete valuation ring  and let $ \varpi $ be  a     uniformizer.    Then for any   $ v \in V_{R} $ and ideals $ I , J  \subset  R    $,
$$ K_{R, I } v + J V_{R}     =   \begin{cases}  
I_{v} V_{R} \setminus  (  \varpi    I_{v} 
   V_{R}   )      & \text{ if }  I =  R ,  \, I_{v}  \supsetneq 
   J \\  v    + ( J +  I \cdot I_{v} ) V_{R}  & \text{otherwise}.
\end{cases}  $$    
\end{lemma}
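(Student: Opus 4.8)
The plan is to handle the cases $I = R$ and $I \neq R$ separately; the case $v = 0$ is trivial, since then $I_v = (0)$ and both sides equal $JV_R$ (and we lie in the ``otherwise'' branch).

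Suppose first that $I = R$, so $K_{R,R} = \Gb(R)$ and the left-hand side is $\Gb(R)v + JV_R$. By Lemma~\ref{GZorbits} the orbit $\Gb(R)v$ consists of all vectors whose coordinate ideal equals $I_v$; writing $I_v = (\varpi^m)$, this orbit is $\varpi^m V_R \setminus \varpi^{m+1}V_R = I_v V_R \setminus \varpi I_v V_R$. If $I_v \supsetneq J$ then $JV_R \subseteq \varpi I_v V_R$, and translation by an element of $\varpi I_v V_R$ maps $I_v V_R \setminus \varpi I_v V_R$ into itself, so the left-hand side is $I_v V_R \setminus \varpi I_v V_R$, giving the first case. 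If instead $I_v \subseteq J$ then $v \in JV_R$ and $I_v V_R \subseteq JV_R$, so the left-hand side collapses to $JV_R$, which matches $v + (J + I\cdot I_v)V_R = v + JV_R$ since $J + I_v = J$.

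Now suppose $I = (\varpi^a)$ with $a \geq 1$ and (by the above) $v \neq 0$. Here $K_{R,I}$ is exactly the set of $\gamma \in \Gb(R)$ all of whose entries of $\gamma - 1$ lie in $I$. Write $v = \alpha v'$ with $(\alpha) = I_v$ and $v'$ unimodular, possible over the PID $R$. The inclusion $\subseteq$ is immediate from $(\gamma - 1)v = \alpha(\gamma - 1)v' \in \alpha\, IV_R = I\cdot I_v V_R$. For $\supseteq$ it suffices to prove $\{(\gamma - 1)v : \gamma \in K_{R,I}\} \supseteq I\cdot I_v V_R$, since adding an arbitrary element of $JV_R$ then produces all of $v + (J + I\cdot I_v)V_R$. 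As $(\gamma - 1)v = \alpha(\gamma - 1)v'$, this reduces to $\{(\gamma - 1)v' : \gamma \in K_{R,I}\} \supseteq IV_R$; and since $I$ is an ideal, $K_{R,I}$ is normalized by $\Gb(R)$, and choosing $g \in \Gb(R)$ with $gv' = e_1$ (Lemma~\ref{GZorbits}) one has $(g^{-1}\delta g - 1)v' = g^{-1}(\delta - 1)e_1$, so the problem reduces to showing $K_{R,I}\cdot e_1 \supseteq e_1 + IV_R$.

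This last point is the crux. Given $w = (w_1,\dots,w_{2n})^t$ with every $w_i \in I$, I would take $\gamma = \gamma_1\gamma_2$ where $\gamma_2 = \left(\begin{smallmatrix} A & 0 \\ 0 & (A^t)^{-1}\end{smallmatrix}\right) \in \mathrm{Sp}_{2n}(R)$ with $A = I_n + w_1 E_{11} + \sum_{i\geq 2} w_i E_{i1}$, which lies in $\GL_n(R)$ because $1 + w_1$ is a unit of the DVR $R$, so that $\gamma_2 e_1 = e_1 + \sum_{i=1}^n w_i e_i$; and $\gamma_1 = \left(\begin{smallmatrix} I_n & 0 \\ C & I_n\end{smallmatrix}\right) \in \mathrm{Sp}_{2n}(R)$ with $C$ the symmetric matrix supported in the first row and column whose entries are determined by the requirement $\gamma_1(\gamma_2 e_1) = e_1 + w$ (each entry of $C$ lands in $I$ because the first coordinate of $\gamma_2 e_1$ is a unit and the remaining coordinates lie in $I$). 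Then $\gamma_1,\gamma_2 \in \mathrm{Sp}_{2n}(R) \subseteq \Gb(R)$ and all entries of $\gamma_i - 1$ lie in $I$ — using once more $1 + I \subseteq R^\times$ to control the block $(A^t)^{-1}$ — hence $\gamma \in K_{R,I}$, while $\gamma e_1 = e_1 + w$ by construction. The main obstacle is exactly this explicit symplectic lifting together with the bookkeeping that every auxiliary entry remains in $I$; the fact that makes it go through is that $1 + I$ consists of units in a discrete valuation ring.
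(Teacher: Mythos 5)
Your proof is correct and takes essentially the same approach as the paper's: reduce via normality of $K_{R,I}$ in $\Gb(R)$ to the case $v = e_1$, then show $K_{R,I}\cdot e_1 = e_1 + I V_R$ by exhibiting matrices in $K_{R,I}$ realizing an arbitrary prescribed first column congruent to $e_1$ modulo $I$. Where the paper merely asserts that such matrices exist, you supply an explicit $\gamma_1\gamma_2$ decomposition (using that $1+I \subset R^\times$ in a DVR), and you handle the $J$-translation directly instead of first reducing to $J = 0$; both are sound.
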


\begin{proof} First assume that $ J = 0 $. 
Then the  case $ I = R $ is Lemma \ref{GZorbits}, so we  also assume that  $  I $ is proper.     Let $ d_{v} \in I_{v} $ be a generator and  $  \kappa  \in \Gb( R  ) $ such that $ v = \kappa d_{v} e_{1} $.  Since $ K_{R, I } $ is normal in $  \mathbf{G}(R)$, 
$ K_{R, I} v = K_{R, I} ( \kappa d_{v} e_{1} ) =  \kappa d_{v}  (  K_{R, I}  e_{1} )  $   and  the claim is further reduced to the case $ v = e_{1} $.   But this holds since $ K_{R, I } $ contains matrices whose first column is $ [ 1 + a_{1},   a_{2}, \ldots, a_{2n}]^{t} $ for arbitrary $ a_{i} \in I  $. Now note that the cases for $ J \neq  0 $  follow easily 
 from the corresponding ones 
 for  $ J = 0  $.   
\end{proof}    

\begin{lemma}  \label{KMorbits}
Let $ M , N $ be positive integers such that  $ M | N $ and $ v \in V_{\ZZ} $. Let $ d_{v} \in \ZZ   $ denote a generator of $I_{v}$, $b = \gcd(Md_{v}, N )$ and $ S $ the set of primes   $ \ell   $      such that $ d_{v} \ZZ_{\ell} \supsetneq N \ZZ_{\ell}    $ and $ \ell \nmid M $.        
Then   $$  K_{M}v + N V_{\widehat{\ZZ}}   =  \prod_{  \ell \in S }    (       d_{v}  V_{\ZZ_{\ell}}  \setminus  \ell   d_{v}     V_{\ZZ_{\ell}}   )     \prod_{\ell  \notin S } ( v +  b    V_{\ZZ_{\ell}}   )       
$$ 
\end{lemma}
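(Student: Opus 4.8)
The plan is to reduce the assertion to the purely local computation of Lemma~\ref{KIorbits}. Using the decomposition $\widehat{\ZZ} = \prod_\ell \ZZ_\ell$ over the rational primes, one gets $V_{\widehat{\ZZ}} = \prod_\ell V_{\ZZ_\ell}$, $\Gb(\widehat{\ZZ}) = \prod_\ell \Gb(\ZZ_\ell)$, and $NV_{\widehat{\ZZ}} = \prod_\ell NV_{\ZZ_\ell}$; moreover the principal congruence subgroup decomposes as $K_M = \prod_\ell K_{M,\ell}$, where $K_{M,\ell} = \{g \in \Gb(\ZZ_\ell) : (g-1)V_{\ZZ_\ell} \subset MV_{\ZZ_\ell}\}$ is precisely the group $K_{\ZZ_\ell, M\ZZ_\ell}$ attached to the discrete valuation ring $R = \ZZ_\ell$ in Lemma~\ref{KIorbits} (and equals all of $\Gb(\ZZ_\ell)$ when $\ell \nmid M$). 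Writing $v_\ell$ for the image of $v$ in $V_{\ZZ_\ell}$, it follows that $K_M v + NV_{\widehat{\ZZ}} = \prod_\ell \bigl(K_{M,\ell} v_\ell + NV_{\ZZ_\ell}\bigr)$, so it suffices to identify each local factor.

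For a fixed prime $\ell$ I would apply Lemma~\ref{KIorbits} to $R = \ZZ_\ell$ with uniformizer $\varpi = \ell$, $I = M\ZZ_\ell$ and $J = N\ZZ_\ell$. The one preliminary observation needed is that the ideal $I_{v_\ell} \subset \ZZ_\ell$ generated by the standard coordinates of $v_\ell$ equals $d_v\ZZ_\ell$, since $d_v$ is a greatest common divisor of the standard coordinates of $v$ over $\ZZ$ and hence generates the corresponding ideal $\ZZ_\ell$-locally. Because the ideals of a discrete valuation ring are totally ordered, the case distinction in Lemma~\ref{KIorbits} --- whether or not both $I = R$ and $I_{v_\ell} \supsetneq J$ hold --- becomes exactly the distinction $\ell \in S$ versus $\ell \notin S$ (recall that $\ell \in S$ means $\ell \nmid M$ and $d_v\ZZ_\ell \supsetneq N\ZZ_\ell$). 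When $\ell \in S$, Lemma~\ref{KIorbits} gives $K_{M,\ell} v_\ell + NV_{\ZZ_\ell} = I_{v_\ell}V_{\ZZ_\ell} \setminus \ell I_{v_\ell}V_{\ZZ_\ell} = d_v V_{\ZZ_\ell} \setminus \ell d_v V_{\ZZ_\ell}$, which is the asserted factor. When $\ell \notin S$, it gives $v_\ell + (N\ZZ_\ell + M\ZZ_\ell \cdot d_v\ZZ_\ell)V_{\ZZ_\ell}$, and one concludes by the elementary identity of ideals $N\ZZ_\ell + Md_v\ZZ_\ell = \gcd(N, Md_v)\ZZ_\ell = b\ZZ_\ell$, which yields the factor $v_\ell + bV_{\ZZ_\ell}$.

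Assembling these local factors over all primes $\ell$ produces the right-hand side; one should only note that this is a genuine product of subsets of $V_{\widehat{\ZZ}}$, since for every $\ell \nmid N$ one has $N\ZZ_\ell = \ZZ_\ell$, hence $\ell \notin S$ and $b\ZZ_\ell = \ZZ_\ell$, so that the $\ell$-th factor is all of $V_{\ZZ_\ell}$. I do not anticipate any genuine obstacle here: the substance is entirely contained in Lemma~\ref{KIorbits}, and the only thing requiring care is the bookkeeping that the discrete-valuation-ring case split corresponds precisely to membership in $S$, together with the greatest-common-divisor identity $N\ZZ_\ell + Md_v\ZZ_\ell = b\ZZ_\ell$.
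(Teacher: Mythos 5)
Your proof is correct and takes essentially the same approach as the paper, which also reduces to Lemma~\ref{KIorbits} via the prime decomposition $K_M = \prod_\ell K_{\ZZ_\ell, M\ZZ_\ell}$ and $v + NV_{\widehat{\ZZ}} = \prod_\ell (v + NV_{\ZZ_\ell})$. The paper states this reduction in a single line; your write-up simply makes explicit the case split for $\ell \in S$ versus $\ell \notin S$ and the gcd identity $N\ZZ_\ell + Md_v\ZZ_\ell = b\ZZ_\ell$ that the paper leaves to the reader.
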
    

\begin{proof}Since $ K_{M} = \prod_{\ell} K_{\ZZ_{\ell}, M\ZZ_{\ell}} $ and $ v + N V_{\widehat{\ZZ}}  = \prod_{\ell } (v  + N V_{\ZZ_{\ell}}) $, the claim    follows by Lemma  \ref{KIorbits}. 
\end{proof}

\begin{remark} If we write $ N = M d  $, the result above shows that $ (V_{\ZZ} / N V_{\ZZ})^{K_{M} / K_{N}  }  = d V_{\ZZ} / N V_{\ZZ} $.   
\end{remark}

\subsection{RIC functors}
The adelic distribution relations of Eisenstein classes 
are  most  conveniently   described as a morphism between two \emph{RIC functors} \cite[\S 2]{AESthesis} (cf.\ \cite[\S 2]{Anticyclo}). To make the note more self-contained, we briefly recall the terminology  and establish a basic result needed later on.

Fix for this subsection only a locally profinite group $ G $ and a non-empty collection $ \Upsilon $ of compact open subgroups. We assume that $ \Upsilon $ is closed under intersections, conjugation by elements of $G$ and for every $ K , L \in  \Upsilon $, there exists a $ K' \in \Upsilon  $ such that $ K ' \subset L $ and $ K' \triangleleft K  $. For such an $ \Upsilon $, we associate a category $   \mathcal{P}(G)    =    \mathcal{P}(G, \Upsilon)$ whose objects are elements of $\Upsilon$ and whose morphisms are given by  $ \mathrm{Hom}_{\mathcal{P}(G)} ( L, K ) =  \left \{ g \in G \, | \,  g^{-1} L g  \subset  K   \right \}  $ for $L$, $ K \in \Upsilon $. Elements of $\mathrm{Hom}_{\mathcal{P}(G)}(L, K) $ will be  written as either $ (L \xrightarrow{g} K) $ or $[g]_{L,K} $, and composition in $ \mathcal{P}(G) $ 
 is given by  $$ (L \xrightarrow{ g } K  ) \circ  (  L ' \xrightarrow {h } L ) =  (L' \xrightarrow{ h } L \xrightarrow{g} K )  =  ( L '  \xrightarrow{hg} K ) . $$ 
 If $  e   $ denotes the identity of $ G $, the inclusion $ (L\xrightarrow{e}K) $ will also denoted by $ \pr_{L,K} $. 

   \begin{definition}    
\label{RICdefinition}   Let $ R $ be a commutative ring with  identity.      An   \emph{RIC functor $ M $ on $ (G, \Upsilon ) $ valued in    $ R $-\text{Mod}}  is a pair of covariant functors $$  M ^ { * } :  \mathcal{P}(G, \Upsilon) ^ { \text{op} }  \to   R\text{-Mod}  \quad  \quad   \quad        M  _{  * } :  \mathcal{P}(G, \Upsilon)  \to  R\text{-Mod}  $$
satisfying   the   following   three   conditions:         
\begin{itemize}  
\item [(C1)] $ M^{*}(K) = M_{*}(K)$ for all $ K  \in   \Upsilon $. Denote  this common $ R $-module  by $ M(K) $.
\item [(C2)]For all $ K \in \Upsilon $ and $ g \in G $, $$ (gKg^{-1} \xrightarrow{g} K)^{*} =  ( K \xrightarrow{g^{-1} } gKg^{-1} )_{*}  \in  \mathrm{Hom}_{R\text{-Mod}}(M(K),M(gKg^{-1})) . $$
Here for a morphism $ \phi  \in   \mathcal{P} (   G   )  $,  we denote $ \phi_{*}  : = M_{*}(\phi) $ and $ \phi^{*}  : = M^{*}(\phi) $. 
\item [(C3)] $  [ \gamma ]_{K,K,*} : M(K) \to M(K) $ is the identity map  for all $ K \in \Upsilon $ and $ \gamma \in K $.
\end{itemize}    
We will denote the RIC functor above simply  as   $ M : \mathcal{P}(G, \Upsilon) \to R\text{-Mod}$.    We refer to the maps $ \phi^{*} $ (resp.,   $ \phi_{*} $)    in axiom (C2)   as the \emph{pullback}  (resp., \emph{pushforward}) induced by $ \phi $. If moreover the element of $ G $ underlying the morphism $ \phi $ is $ e $, we also refer to $ \phi^{*} = \pr^{*} $ (resp.,     $ \phi_{*} = \pr_{*} $) as a \emph{restriction} (resp., an  \emph{induction}). \end{definition} 

\begin{definition}  A \emph{morphism} $ \varphi :  M_{1} \to M_{2} $ between two RIC functors $M_{1}$, $M_{2} $ is a collection of morphisms $ \varphi(K) : M_{1}(K) \to M_{2}(K) $ for all $ K \in \Upsilon $ that together constitute a natural transformation $ M_{1, *} \to  M_{2, *}$ and a natural transformation $ M_{1}^{*} \to M_{2}^{*} $.  
If $ \varphi(K) $ is injective for all $ K $, we say that $M_{1} $ is a \emph{sub-functor} of $ M_{2} $.     
\end{definition}    
\begin{definition} Let $ M : \mathcal{P}(G, \Upsilon) \to R\text{-Mod} $ be an RIC functor. We say that  $ M $ is   
\begin{itemize}  
\item [(G)] $ \emph{Galois} $ if for all $ L , K \in \Upsilon $ such that  $ L \triangleleft K $,  we  have  $$ \pr_{L,K}^{*} : M (K) \xrightarrow{\sim} M(L)^{K/L} . $$
Here the action of $ \gamma \in K/L $ on $M(L)$ is via pullbacks induced by $ (L \xrightarrow { \gamma } L  )  $ and $ M(L)^{K/L}$ denotes the invariants of $M(L)$ under this action.      
   
\item [(Co)]  \emph{cohomological} if for all $ L , K \in  \Upsilon $  with   $  L   \subset K $,  
 $$ ( L  \xrightarrow {  e}  K )   _ { * }   \circ   (  L  \xrightarrow{ e  }  K    ) ^ { *  }  =  [K:L] \cdot   (  K  \xrightarrow {  e  }       K   )^{*} .         $$
That is,   the composition is multiplication by index $ [K:L] $ on  $ M(K) $.   
\item [(M)] \emph{Mackey}   
if for all $K, L, L' \in \Upsilon$ with $L ,L' \subset K$, we have a commutative diagram 
\begin{equation}   \label{Mackeydiagram}    
 \begin{tikzcd}[column   sep = large]
 \bigoplus_{\gamma} M(L_{\gamma}) \arrow[r,"{\sum \pr_{*}}"] & M(L) \\
 M(L') \arrow[r,"{\pr_{*}}",swap] \arrow[u,"{\bigoplus  [\gamma]^{*}}"] & M(K) \arrow[u,"{\pr^{*}}",swap] 
\end{tikzcd}
\end{equation}   
where the direct sum in the top left corner is over a fixed choice of coset representatives $\gamma \in K $ of  the  double   quotient     $ L \backslash K / L'$ and $L_{\gamma} =  L  \cap  \gamma L' \gamma^{-1}    \in    \Upsilon       $. The  condition  is then satisfied by  any such  choice   of     representatives of $ L \backslash  K /  L '  $.     
\end{itemize}   
If $ M $ satisfies both (M) and (Co), we will say that $M  $ is \emph{CoMack}.  If $ S $ is  an   $ R $-algebra, the mapping $ K \mapsto M(K) \otimes _{R}  S  $ is   an   $ S $-valued  RIC functor, which is cohomological or Mackey if $ M $  is   so.  
\end{definition}

\begin{remark} An RIC functor is referred to as a ``cohomology functor" in \cite{Anticyclo}. We prefer the former terminology, since the standard name for the axiom (Co) (\cite{Webb}) conflicts with the latter terminology.       
\end{remark}

\begin{definition}     \label{compatibleunderpulldefisub}           Let $ N : \mathcal{P}(G, \Upsilon) \to  R $-\text{Mod} be an  RIC functor and $ S \subset  \Upsilon $ a  non-empty 
subset.  Let $ \mathscr{G}  = \left \{ B_{K} \, | \, K \in S \right \} $ be a collection   of  $ R $-submodules $ B_{K}  \subset   N(K) $   indexed by $ K \in  S $.  We say that $ \mathscr{G} $  is \emph{compatible under pullbacks of $ N $}  if for all $ L$, $K \in S $ and $ g \in G $ satisfying $ g ^{-1} L g \subset  K  $,  the morphism $ [g]_{ L , K  } ^{* }  : N(K) \to N(L  ) $ sends $ B_{K}  $ to $ B_{L } $. We say that $ \mathscr{G} $ is \emph{compatible under  restrictions} if we only require the previous  condition for $ L $, $ K \in S $ with $ L \subset K $ and $ g $ the identity  element.   We similarly define compatibility under pushforwards and  inductions. 
\end{definition} 
By definition,  a family   $  \mathscr{G}  $  as above  constitutes a  sub-functor of $ N $ if $ S = \Upsilon $ and  $  \mathscr{G}    $ is compatible under both pullbacks and inductions (equivalently,  pushforwards and restrictions).    
 
\begin{definition}   \label{compatibleunderpulldefi} Let $ M  $, $  N :  \mathcal{P}(G, \Upsilon)   \to  R \text{-Mod} $ be   two RIC   functors, $  S   \subset \Upsilon $ be any  non-empty  subset  and  $ \mathscr{G} =  \left \{ B_{K}  \, | \, K \in S  \right \}   $ be  a family of submodules $ B_{K}  \subset      N(K) $  indexed by $ K \in  S $ that is  compatible under pullbacks of $ N$.  Let $  \mathscr{F}    = \left \{  \varphi _ {K  } : B_{K} \to M(K)  \, |  \,     K \in S   \right \} $ be a collection of $ R $-module  homorphisms indexed by $ S $.    
We say that   $ \mathscr{F}   $ is \emph{compatible under pullbacks of $M$} if for all $   L  $, $   K \in  S $  and $ g \in G $ satisfying   $   g^{-1}  L  g  \subset K $, we have $   [g] ^ {*} _ {L, K } \circ \varphi _ { K } =  \varphi_{ L }    \circ  [g]^{*}_{ L ,  K }    $   as   maps   $  B_{K}  \to  M(L) $. We similarly  define   compatibility of $ \mathscr{F} $    under  restrictions, pushforwards or inductions when $ \mathscr{G} $ has these properties  respectively.     
\end{definition}
Suppose $ B_{K} = N(K) $ for all $ K \in S $. Then a family $ \mathscr{F} $ as above  constitutes a \emph{morphism} $ \varphi : N \to M $ of RIC functors if   $ S =  \Upsilon     $ and  $ \mathscr{F} $ is compatible under pullbacks and inductions (equivalently, pushforwards and  restrictions). 
\begin{lemma}  Let $ M  $, $ N :  \mathcal{P}(G, \Upsilon )  \to  R \text{-Mod} $ be RIC  functors   such   that $ M $  is  Mackey and all restriction maps in $ M $ are injective.   Let    $ \mathscr{F} = \left \{  \varphi_{K}   : N ( K)  \to M ( K )   \, | \,  K \in  \Upsilon     \right \} $ be a family  of morphisms that is  compatible  under  pullbacks.      Then $ \mathscr{F} $ is compatible under    inductions and thus  constitutes  a morphism of RIC functors.   
\label{res+conj=ind}
\end{lemma}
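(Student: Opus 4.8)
The plan is to reduce the statement to a Mackey-type decomposition of the relevant restriction maps. Fix $L \subset K$ in $\Upsilon$; I must show that $\pr_{L,K,*} \circ \varphi_L = \varphi_K \circ \pr_{L,K,*}$ as maps $N(L) \to M(K)$. Since all restriction maps in $M$ are injective, it suffices to check this after applying $\pr_{K',K}^*$ for a single conveniently chosen $K' \in \Upsilon$ with $K' \subset K$; the natural choice is to take $K' \triangleleft K$ normal and small, which exists by our standing hypothesis on $\Upsilon$ (and we may further shrink so that $K' \subset L$ is not required — what we want is $K'$ normal in $K$). So the first step is: \emph{pick $K' \in \Upsilon$ with $K' \triangleleft K$, and reduce the claimed identity to the equality of $\pr_{K',K}^* \circ \pr_{L,K,*} \circ \varphi_L$ and $\pr_{K',K}^* \circ \varphi_K \circ \pr_{L,K,*}$.}

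The second step handles the two sides separately using the Mackey axiom (M) for $M$. For the right-hand side, compatibility of $\mathscr{F}$ under pullbacks turns $\pr_{K',K}^* \circ \varphi_K$ into $\varphi_{K'} \circ \pr_{K',K}^*$, and then the Mackey square (\ref{Mackeydiagram}) applied to $M$ with the triple $(K, K', L)$ rewrites $\pr_{K',K}^* \circ \pr_{L,K,*}$ as $\sum_\gamma \pr_{*} \circ [\gamma]^* \circ (\text{restriction})$, summing over representatives $\gamma \in K$ of $K' \backslash K / L$, with $L_\gamma = K' \cap \gamma L \gamma^{-1}$. (Here I am using that, because $K' \triangleleft K$, one has $L_\gamma = K' \cap \gamma L \gamma^{-1}$ lying in $\Upsilon$ by closure under intersection and conjugation.) For the left-hand side I apply the Mackey square for $M$ in the same shape and then need to commute the resulting sum of pushforwards, conjugations and restrictions past $\varphi_L$; this is where the hypotheses on $\mathscr{F}$ enter: compatibility under pullbacks gives commutation with each $[\gamma]^*$ and with the restriction $M(L) \leftarrow$, reducing everything to a sum over the same index set $K' \backslash K/L$ of $\pr_{L_\gamma, K', *} \circ \varphi_{L_\gamma} \circ (\cdots)$ — i.e.\ exactly the same expression that the right-hand side produced.

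The third step is therefore just matching the two sums term by term: after the reductions, both sides equal $\sum_\gamma \pr_{L_\gamma, K', *}\big(\varphi_{L_\gamma}([\gamma]^*(\pr_{L_\gamma, L}^* x))\big)$ for $x \in N(L)$, where the inner step uses compatibility of $\mathscr{F}$ under pullbacks to move $\varphi$ inside $[\gamma]^*$ and the restriction, \emph{and uses nothing about pushforwards for $N$ or $M$ beyond their formal functoriality}. So the identity holds after applying the injective map $\pr_{K',K}^*$, hence holds. The main obstacle I anticipate is bookkeeping: making sure the Mackey square is invoked with the correct orientation (it is stated for $L, L' \subset K$ with a double coset $L \backslash K / L'$, so I must feed it $(K; K', L)$ and track that $L_\gamma$ is built as $K' \cap \gamma L \gamma^{-1}$), and checking that all the auxiliary subgroups $L_\gamma$ genuinely lie in $\Upsilon$ so that every arrow in sight is a legitimate morphism of $\mathcal{P}(G,\Upsilon)$ — both of these are guaranteed by the closure properties we imposed on $\Upsilon$, but they need to be stated. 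A secondary subtlety is verifying that the compatibility of $\mathscr{F}$ under pullbacks we are handed indeed suffices to commute $\varphi$ past \emph{restrictions} (the $e$-labeled pullbacks) and past \emph{conjugations} $[\gamma]^*$; both are special cases of pullback compatibility by Definition \ref{compatibleunderpulldefi}, so no extra input is needed.
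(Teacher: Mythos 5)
Your reduction to checking the identity after the injective $\pr^{*}_{K',K}$ is the right first move and matches the paper, but your parenthetical claim that ``$K'\subset L$ is not required --- what we want is $K'$ normal in $K$'' is exactly where the argument breaks. The paper chooses $K'\in\Upsilon$ with \emph{both} $K'\triangleleft K$ \emph{and} $K'\subset L$, and this second containment is what makes the Mackey decomposition collapse into a sum of pure conjugations (no nontrivial inductions).

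Concretely: applying the Mackey square for $M$ to $\pr^{*}_{K',K}\circ\pr_{L,K,*}$ with the triple $(K,K',L)$ yields a sum indexed by $K'\backslash K/L$ of composites $\pr_{L_\gamma,K',*}\circ[\gamma]^{*}_{L_\gamma,L}$ with $L_\gamma=K'\cap\gamma L\gamma^{-1}$. Expanding both sides of your target identity and pushing $\varphi$ through the conjugations (legal, by pullback compatibility), the left side becomes $\sum_\gamma\pr^{M}_{L_\gamma,K',*}\circ\varphi_{L_\gamma}\circ[\gamma]^{*N}_{L_\gamma,L}$ while the right side becomes $\sum_\gamma\varphi_{K'}\circ\pr^{N}_{L_\gamma,K',*}\circ[\gamma]^{*N}_{L_\gamma,L}$. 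To match them termwise you would need $\pr^{M}_{L_\gamma,K',*}\circ\varphi_{L_\gamma}=\varphi_{K'}\circ\pr^{N}_{L_\gamma,K',*}$ --- but that is precisely the induction compatibility you are trying to prove, now reappearing at the smaller pair $(L_\gamma,K')$. Your third step asserts the two expressions are ``exactly the same,'' but they differ by exactly this uproven commutation, so the argument as written is circular. The fix is the paper's: require $K'\subset L$ as well; then normality of $K'$ in $K$ gives $K'=\gamma K'\gamma^{-1}\subset\gamma L\gamma^{-1}$, hence $L_\gamma=K'$ for every $\gamma$, the pushforwards $\pr_{L_\gamma,K',*}$ are identities, and both sides reduce to $\sum_\gamma[\gamma]^{*}_{K',L}\circ\varphi_L=\sum_\gamma\varphi_{K'}\circ[\gamma]^{*}_{K',L}$, which holds by pullback compatibility alone. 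Also double-check the index set: with $K'\subset L$, the double coset space $K'\backslash K/L$ is just $K/L$, which is how the paper states the final displayed equation.
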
    

\begin{proof}    Let $ L , K \in \Upsilon $ with $ L \subset K $. Pick a $ K' \in \Upsilon $ such that $ K' \triangleleft K $, $ K' \subset L $. 
Since $  \pr_{ K'   ,  K  } ^ { * }  : M (  K  ) \to M(K')  $ is injective,    $ \pr_{L, K , * } \circ \varphi_{L}  =   \varphi _ { K } \circ \pr_{L , K, * } $
if and only if  \begin{equation}   \label{resconjind1}      \pr_{ K' , K } ^ { * } \circ \pr_{L,K,*} \circ \varphi_{L}  =  \pr _ { K' , K  }  ^  { *  } \circ  \varphi_{K}  \circ \pr_{L, K , * }   
\end{equation}    
as maps $ N (L)  \to  M(K' )    $.    Since $  \mathscr{F}     $ is  compatible under   restrictions,  
we have  $  \pr_{K', K  } ^ { * } \circ  \varphi_{K}  = \varphi_{K'} \circ  \pr_{K', K  } ^ { * }  $. As $ M $ is  Mackey,   (\ref{resconjind1})   is  equivalent  to  $$ \sum   \nolimits  _{ \gamma \in K / L }  [ \gamma  ] _ { K '  ,   L    }    ^ { * } \circ \varphi_{L} =  \sum   \nolimits     _{  \gamma \in K / L }   \varphi_{K' } \circ  [  \gamma ] ^ { * } _ { K ' ,  L }   $$
(see Lemma 2.1.11 in \cite{AESthesis} for details).    
But this clearly holds by pullback compatibility of $ \mathscr{F} $. 
\end{proof}

\subsection{Functors for $\mathrm{GSp}_{2n}$}
We now resume the notations introduced in \S \ref{Siegelreview} and \S \ref{moduli}. \label{functorsGSp2n}    
Fix for the rest of this   note    a     rational prime $ p $ and an integer $ c > 1 $ with $ (c,p) = 1 $. Denote by $ \Upsilon_{1}  $ the collection of all principal congruence subgroups $ K_{N} $ for $  N  \geq 3 $ satisfying $ (N,cp) = 1 $. Let  
$$   G  : = \Gb(\Ab_{f} ^{cp}  ) \times  \Gb(   \ZZ_ {cp} )    $$  where $ \ZZ_{cp} : = \prod_{\ell | cp } \ZZ_{\ell} $ and  $  \Ab    _{f}^{cp}  :    = \Ab_{f} / \ZZ_{cp} $.    We will also  denote $ V_{f}^{cp} :=  V_{\ZZ} \otimes \Ab_{f}^{cp} $, $ V_{\ZZ_{cp}} : =  V_{\ZZ} \otimes \ZZ_{cp} $, $ \mathbf{Z}(\QQ)^{cp} : =  \mathbf{Z}(\QQ) \cap G $  and $  I^{cp}    : = \left \{ a \in \ZZ \, | \, (a     , cp    ) = 1 \right \} $.    For $ a \in \QQ^{\times} $, we denote   by  $ z_{a} \in \mathbf{Z}(\QQ) \simeq  \QQ^{\times} $ the corresponding element of the center given by $2n$-copies of $ a $ on the diagonal.       
Let  $ \Upsilon $ be the collection of all compact open  subgroups  of $ G $ which     are contained in a $ G $-conjugate of a group in $ \Upsilon_{1} $   and which  are  of the form $ \Gb(\ZZ_{cp})   L $ for some $ L \subset  \Gb(\Ab_{f}^{cp}) $.  It is easily verified that both $ \Upsilon_{1} $, $\Upsilon $ satisfy the conditions of the previous subsection with respect to $ K_{1} $, $ G $ respectively.

Set   $ X : = V_{f} \setminus  \{    0 \}  $ and view elements of $ X \subset  V_{f} = \bigoplus_{i=1}^{2n} \Ab_{f} e_{i}    \simeq  \Ab_{f}^{2n} $ as column vectors.    There is a smooth right action $ X  \times G  \to X $ given  by $ (v, g) \mapsto g^{-1}  v   $, i.e., left matrix multiplication by inverse of $ g $.  Let $ \mathcal{S}^{cp}(X)  $ be the set   of   all $ \ZZ_{p}$-valued functions on  $ X $ of the form $ \phi_{cp} \otimes \phi^{cp} $ where $ \phi_{cp} = \ch(V_{\ZZ_{cp}}) $ is the  characteristic function of $ \ZZ_{cp} $    and $ \phi^{cp} $ is a locally constant compactly supported function on $   (V_{f}^{cp}) \setminus   \left \{  0   \right \}     $.  Then  $ \mathcal{S}^{cp}(X) $ is a smooth   left      $G$-representation via its action    on $ X $. We  let   $$   \label{RIC1}      \mathcal{S}      :   \mathcal{P} ( G ,  \Upsilon  )  \to  \ZZ_{p}\text{-Mod}  $$  denote  the RIC  functor associated with the representation $  \mathcal{S}  ^{cp}          (X) $. That is, for any $ K \in \Upsilon $, $ \mathcal{S}(K) =  \mathcal{S}^{cp}(X)^{K} $ with restriction, inductions and conjugations given in the obvious manner.

Fix now a non-negative    integer $ k $. Recall that for each $ K_{N} \in \Upsilon  $, we have   fixed a   choice $  \pi_{N} :  \mathcal{A}_{N}  \to \mathrm{Sh}(K_{N}) $ of a universal abelian scheme in \S \ref{moduli}. Let $ \mathscr{H}_{\ZZ_{p}} = \mathscr{H}_{K_{N}, \ZZ_{p}} $ be the corresponding sheaf of Tate modules.  For each $ N $ satisfying $ K_{N}  \in  \Upsilon_{1} $,     we denote $$ \mathcal{E}_{N, \QQ_{p}} =  \mathcal{E}^{k}_{N, \QQ_{p}}  : = \mathrm{H}^{2n-1}_{\et} \big(\mathrm{Sh}(K_{N}) , \mathrm{Sym}^{k}(\mathscr{H}_{\QQ_{p}})   (n)     \big )              .   $$
Given two such $ M , N  $ such that $ M | N $,  we have a restriction map $  \pr_{N,M}^{*} : 
   \mathcal{E}_{M,\QQ_{p}} \to \mathcal{E}_{N, \QQ_{p}} $ induced by unit adjunction for $ \pr_{N, M} = [\mathrm{id}]_{N,M} :  \mathrm{Sh}(K_{N}) \to \mathrm{Sh}(K_{M}) $ and the isomorphism $  \mathcal{A}_{N} \to  \pr_{N,M} ^{*} (\mathcal{A}_{M}) $ specified by (\ref{univisog})  for $ g  = \mathrm{id} $.   
Let $$  \widehat{\mathcal{E} }_{\QQ_{p}} = \widehat{\mathcal{E}}_{\QQ_{p}}  ^ { k  }     : = \varinjlim\nolimits _{N} \mathcal{E}_{N, \QQ_{p}} ^{k}   $$ where the limit is taken with respect to restriction maps for $ M | N  $. 
\begin{lemma} $ \widehat{\mathcal{E}}_{\QQ_{p}} $ is a smooth $ G $-representation.  
\end{lemma}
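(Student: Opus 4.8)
The plan is to equip $\widehat{\mathcal{E}}_{\QQ_p}$ with the usual Hecke action of $G$ and then observe that smoothness is automatic for a filtered colimit over finite levels. To construct the action, view $g\in G$ inside $\mathbf{G}(\Ab_f)$ via the integrality of $G$ at the primes dividing $cp$. Given a class of $\widehat{\mathcal{E}}_{\QQ_p}$ represented by $x\in\mathcal{E}_{N,\QQ_p}$ with $K_N\in\Upsilon_1$, I would choose a multiple $N'$ of $N$ with $(N',cp)=1$ and sufficiently divisible that (i) $g^{-1}K_{N'}g\subset K_N$, which is possible since $gK_Ng^{-1}$ is open in $G$, and (ii) on writing $g=z_ah$ as in Remark \ref{twistsimplification} one has $h\big((N'/N)V_{\widehat{\ZZ}}\big)\subset V_{\widehat{\ZZ}}\subset hV_{\widehat{\ZZ}}$. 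Condition (ii) places us in the situation of \S\ref{moduli}, so the finite \'{e}tale map $[g]_{N',N}:\mathrm{Sh}(K_{N'})\to\mathrm{Sh}(K_N)$ carries the isogeny ${}^{g}\psi_{N',N}:\mathcal{A}_{N'}\to[g]_{N',N}^{*}\mathcal{A}_N$ of (\ref{univisog}); pullback on \'{e}tale cohomology together with the isomorphism $\mathrm{Sym}^{k}(\mathscr{H}_{K_{N'},\QQ_p})\xrightarrow{\ \sim\ }[g]_{N',N}^{*}\,\mathrm{Sym}^{k}(\mathscr{H}_{K_N,\QQ_p})$ induced by ${}^{g}\psi_{N',N}$ (an isomorphism after $-\otimes\QQ_p$, since an isogeny is invertible on rational Tate modules) yields a $\QQ_p$-linear map $[g]_{N',N}^{*}:\mathcal{E}_{N,\QQ_p}\to\mathcal{E}_{N',\QQ_p}$, and I would set $g\cdot[x]:=\big[\,[g]_{N',N}^{*}x\,\big]\in\widehat{\mathcal{E}}_{\QQ_p}$.

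Next I would verify that this is well defined and defines an action. Independence of the auxiliary level follows from the factorization $[g]_{N'',N}=[g]_{N',N}\circ\pr_{N'',N'}$ for admissible $N'\mid N''$, which gives $[g]_{N'',N}^{*}=\pr_{N'',N'}^{*}\circ[g]_{N',N}^{*}$; since $\pr_{N'',N'}^{*}$ is a transition map of the colimit, the two resulting elements of $\widehat{\mathcal{E}}_{\QQ_p}$ coincide, and the general case follows by passing to a common multiple. Independence of the representative $x$ is the same argument applied to the transition maps defining the colimit, and the composition law $[g_1g_2]_{N'',N}^{*}=[g_2]_{N'',N'}^{*}\circ[g_1]_{N',N}^{*}$ at compatible levels, together with $[e]_{N,N}=\mathrm{id}_{\mathrm{Sh}(K_N)}$, gives that this is indeed an action of $G$. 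All of these are formal consequences of the functoriality of $\{\mathrm{Sh}(K),\mathcal{A}_K\}$ recalled in \S\ref{moduli}.

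For smoothness, I claim that any $[x]\in\widehat{\mathcal{E}}_{\QQ_p}$ arising from level $N$ is fixed by the open subgroup $K_N\subset G$. Indeed, for $\gamma\in K_N$ we have $\gamma^{-1}K_N\gamma=K_N$, so $N'=N$ is admissible in the construction, and by the moduli description in \S\ref{moduli} the transformation ${}^{\gamma}\Phi_{N,N}$ sends $(A,\lambda,\eta)$ to $(A,\lambda,\eta\circ\gamma)=(A,\lambda,\eta)$, the last equality holding because $\gamma\in K_N$ acts trivially on $V_{\ZZ}/NV_{\ZZ}$; hence $[\gamma]_{N,N}=\mathrm{id}_{\mathrm{Sh}(K_N)}$, the isogeny ${}^{\gamma}\psi_{N,N}$ is the identity of $\mathcal{A}_N$, and therefore $[\gamma]_{N,N}^{*}=\mathrm{id}$ on $\mathcal{E}_{N,\QQ_p}$, so $\gamma\cdot[x]=[x]$. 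Since every element of $\widehat{\mathcal{E}}_{\QQ_p}$ comes from some finite level, the $G$-action is smooth. I expect the only genuinely delicate point to be the bookkeeping in the middle step: checking that the isogenies ${}^{g}\psi$ and the induced identifications of the coefficient sheaves compose correctly, so that the maps $[g]^{*}$ really do assemble into a well-defined $G$-action on the colimit; smoothness is then immediate.
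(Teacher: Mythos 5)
Your strategy mirrors the paper's: pass to a sufficiently divisible auxiliary level and use the isogeny of (\ref{univisog}) to identify the Tate-module sheaves, then observe that smoothness is automatic since any class at level $N$ is $K_N$-fixed. That last verification is correct as you wrote it. But there is a gap in the middle step, and it is the only genuinely nontrivial point in this lemma. After writing $g=z_ah$ as in Remark~\ref{twistsimplification}, your condition (ii) only guarantees that $h$ (not $g$) satisfies $h\big((N'/N)V_{\widehat\ZZ}\big)\subset V_{\widehat\ZZ}\subset hV_{\widehat\ZZ}$, and (\ref{univisog}) constructs the isogeny ${}^{\eta}\psi_{N',N}$ only for those $\eta$ satisfying this sandwich condition. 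So the object ${}^{g}\psi_{N',N}$ you invoke is not actually given by (\ref{univisog}); what is given is ${}^{h}\psi_{N',N}$, and at the level of schemes $[g]_{N',N}=[h]_{N',N}$ since $z_a$ acts trivially on the Shimura variety. If one then just sets $g\cdot[x]:=\big[[h]_{N',N}^{*}x\big]$, the central element $z_a$ acts trivially — but it should act by $a^{k}$, since the isogeny $\psi^{\mathrm{univ}}_{z_{a^{-1}}}$ is $[a]$ on the universal abelian scheme, hence scales $\mathrm{Sym}^{k}(\mathscr{H}_{\QQ_p})$ by $a^{k}$. The paper makes this explicit: it decomposes $g=zh$, computes the contribution of $h$ via ${}^{h}\psi_{N,M}$ and the contribution of $z_a$ as multiplication by $a^{k}$, and defines $g\cdot x := a^{k}\cdot[h]_{N,M}^{*}(x)$. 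Without the $a^{k}$ factor the formula does not give a well-defined $G$-action compatible with the cocycle relation for varying decompositions $g=z_ah$. So you should either define ${}^{g}\psi$ for arbitrary $g$ as a genuine quasi-isogeny (composing ${}^{h}\psi$ with the formal inverse of $[a]$, which is only sensible after $\otimes\,\QQ_p$) and check the cocycle condition for these, or follow the paper and insert the $a^k$ by hand.
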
  
\begin{proof} Given $ g \in  G $ and $ x \in \mathcal{E}_{M, \QQ_{p}} $ we can find $ z = z_{a} \in \mathbf{Z}(\QQ)^{cp} $ for some $ a \in I^{cp} $, an element $ h \in G $ and a multiple $ N \in I^{cp} $ of $ M $ 
such that $ g = zh $ and $ \eta (N/M) V_{\widehat{\ZZ}} \subset  V_{\widehat{\ZZ}}  \subset \eta  V_{\widehat{\ZZ}}  $ is satisfied for $ \eta \in \left \{ z^{-1} , h \right \} $. The action of $ \eta $ is described by the pullback $ [\eta]^{*}_{N,M} : \mathcal{E}_{M, \QQ_{p}} \to  \mathcal{E}_{N, \QQ_{p}} $ induced by adjunction for $ [\eta]_{N,M}  :   \mathrm{Sh}(K_{N})  \to  \mathrm{Sh}(K_{M})  $ and  the inverse of the isomorphism  between 
symmetric power  of  sheaves of Tate modules  induced by  the  (prime-to-$p$) isogeny $ {}^{\eta} \psi_{N, M } 
:\mathcal{A}_{N}  \to  { } ^ { \eta    }     \mathcal{A}_{M} $   in     (\ref{univisog}). Since  $ \psi_{z^{-1}}^{\mathrm{univ}} $ is just $[a] $, $ z_{a} = (z_{a^{-1}})^{-1} $ acts by $ a^{k} $. Then $ g \cdot x $ is defined to be the  element corresponding to $ a^{k} \cdot  [h]_{N,M}^{*}(x)  \in  \mathcal{E}_{N,\QQ_{p}} $.  This action is well-defined since the isogenies $ \psi_{\eta}^{\mathrm{univ}} $ for varying $ \eta $  satisfy an obvious cocycle condition. 
\end{proof}    Since $ \pr_{N,M} $ is Galois 
with Galois group $ K_{M} / K_{N} $,  we have $ \pr_{N,M}^{*} \circ \pr_{N,M,*} = \sum_{ \gamma \in K_{M} / K_{N} } [\gamma]_{N,M}^{*} $ and $ \pr_{N,M,*} \circ \pr_{N,M}^{*} = [K_{M}: K_{N}]  \cdot  \mathrm{id}  $. Using this, one deduces that the natural map from $ \mathcal{E}_{N, \QQ_{p}}$ to $ \widehat{\mathcal{E}}_{\QQ_{p}}  $ identifies the former with the $ K_{N}$-invariants of the latter and that $ \pr_{N,M}^{*} $ (resp., $ \pr_{N,M,*}$) are identified with inclusion (resp. $\sum_{\gamma \in K_{N} / K_{M}  }   \gamma  )$.    We let $$ \mathcal{E}_{\QQ_{p}}    : \mathcal{P}(G, \Upsilon) \to  \QQ_{p}\text{-Mod}$$  denote    the associated RIC functor, i.e., $ \mathcal{E}_{\QQ_{p}}(K) :   = (\widehat{\mathcal{E}}_{\QQ_{p}})^{K} $ with obvious choice for pullback and pushforward maps.   It is CoMack and Galois by construction.

Next we define an integral structure on $ \mathcal{E}_{\QQ_{p}} $.     Let  $ \Upsilon_{2}  \subset \Upsilon $ be the subset of all groups  that are contained in a congruence subgroup in $   \Upsilon_{1}   $.       For $ K \in \Upsilon_{2} $,   we let $ \pi_{K} : \mathcal{A}_{K} \to \mathrm{Sh}(K) $ be the abelian scheme given by pulling back $ \mathcal{A}_{M}    $ along the degeneration map $ \mathrm{Sh}(K) \to  \mathrm{Sh}(K_{M}) $ for some $ K_{M} \in \Upsilon_{1} $  that contains $K  $. Then $ \mathcal{A}_{K} $ is independent of the choice of $ M $, since $ K_{M_{1}} \cap K_{M_{2}} = K_{\mathrm{lcm}(M_{1},M_{2})} $. 
Let $ \mathscr{H}_{\ZZ_{p}} = \mathscr{H}_{K, \ZZ_{p}} $ the associated $ \ZZ_{p}$-sheaf on $ \mathrm{Sh}(K) $.        
If we pick $ K_{N} \in \Upsilon_{1} $ such that $ K_{N} \trianglelefteq K $,  the natural pullback map along $  \mathrm{Sh}(K_{N}) \to \mathrm{Sh}(K) $  identifies $ \mathrm{H}^{2n-1}  _ { \et }  \big  ( \mathrm{Sh}(K),   \mathrm{Sym}^{k} ( \mathscr{H}_{\QQ_{p}}  )      ( n  )      \big    ) $  with the  $ K/ K_{N} $-invariants of $ \mathcal{E}_{N, \QQ_{p}}  =    
\mathcal{E}_{\QQ_{p}}(K) $. Again this identification is independent of the choice  
of $ N $.  
Let $    \mathcal{E}  _ { \ZZ_{p}  }  
(K)  \subset \mathcal{E}_{\QQ_{p}}(K)  $ 
denote the  image of $    \mathrm{H}^{2n-1}_{\et} \big (\mathrm{Sh}(K) , \Gamma_{k}( \mathscr{H}_{ \ZZ_{p}})   ( n   )       \big ) $   under 
\begin{align*} 
\mathrm{H}_{\et}^{2n-1} \big  ( \mathrm{Sh}(K) , \Gamma_{k} ( \mathscr{H}_{ \ZZ_{p}} )  ( n )   \big)   & \xrightarrow{   - \otimes  \QQ_{p}      }  \mathrm{H}_{\et}^{2n-1}\big ( \mathrm{Sh}(K) , \Gamma_{k} (   \mathscr{H}_{   \ZZ_{p}  }    ) ( n )  \big ) \otimes \QQ_{p}  \\
& \xrightarrow[\sim]{ \,  \,  \,  \, \, \sigma_{k}      \,  \, \, \,  \,   }  \mathrm{H}^{2n-1}_{\et} \big (\mathrm{Sh}(K) ,   \mathrm{Sym}^{k} ( \mathscr{H}_{   \QQ_{p}} ) ( n )     \big )    \xrightarrow{\sim} \mathcal{E}_{\QQ_{p}}(K)     
\end{align*}  
where $ \sigma _{k} $ is the  isomorphism induced  by    the     map  
(\ref{canonicalisogaammaH}). 

\begin{lemma}   \label{conjcomp}  The family $ \{ \mathcal{E}_{\ZZ_{p}} (K) \, | \, K \in \Upsilon_{2}  \} 
$  is compatible under pullbacks and pushforwards   of $ \mathcal{E}_{\QQ_{p}}$.    
\end{lemma}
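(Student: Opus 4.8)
The plan is to reduce to the case of principal congruence subgroups, and there to exhibit the pullback and pushforward maps as arising from honest operations on integral $\Gamma_{k}$-cohomology. Throughout I will use that for $K\in\Upsilon_{2}$ one has $\mathcal{E}_{\QQ_{p}}(K)=\mathrm{H}^{2n-1}_{\et}(\mathrm{Sh}(K),\mathrm{Sym}^{k}(\mathscr{H}_{K,\QQ_{p}})(n))$, that $\mathcal{E}_{\ZZ_{p}}(K)$ is the image therein of $\mathrm{H}^{2n-1}_{\et}(\mathrm{Sh}(K),\Gamma_{k}(\mathscr{H}_{K,\ZZ_{p}})(n))$ under $\sigma_{k}\circ(-\otimes\QQ_{p})$, and that, restricted to $\Upsilon_{2}$, the structure maps of $\mathcal{E}_{\QQ_{p}}$ are the geometric pullback and finite \'etale trace maps attached to the degeneracy morphisms $[g]_{L,K}\colon\mathrm{Sh}(L)\to\mathrm{Sh}(K)$, together with the canonical identifications of symmetric powers of Tate modules coming from the universal isogenies of \S\ref{moduli}. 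The mechanism I want to exploit is that, since $g\in G$ has integral component at $p$, these universal isogenies are prime to $p$, hence induce \emph{isomorphisms} of the $\ZZ_{p}$-sheaves $\mathscr{H}_{\ZZ_{p}}$ (not merely of $\mathscr{H}_{\QQ_{p}}$), and therefore of $\Gamma_{k}(\mathscr{H}_{\ZZ_{p}})$; the map $\gamma_{k}$ of \eqref{canonicalisogaammaH} is functorial in the sheaf, and $\sigma_{k}$ is induced from it after $-\otimes\QQ_{p}$.

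The first step I would carry out is a reduction lemma. For $K\in\Upsilon_{2}$ and any $K_{N}\in\Upsilon_{1}$ with $K_{N}\trianglelefteq K$, the index $d=[K:K_{N}]$ is prime to $p$, because both $K$ (being of the form $\mathbf{G}(\ZZ_{cp})L$) and $K_{N}$ (since $(N,cp)=1$) have component $\mathbf{G}(\ZZ_{p})$ at $p$. Hence $d\in\ZZ_{p}^{\times}$; as $\pr_{N,K}$ is finite \'etale Galois with group $K/K_{N}$, the integral pullback identifies $\mathrm{H}^{2n-1}_{\et}(\mathrm{Sh}(K),\Gamma_{k}(\mathscr{H}_{K,\ZZ_{p}})(n))$ with the $K/K_{N}$-invariants of $\mathrm{H}^{2n-1}_{\et}(\mathrm{Sh}(K_{N}),\Gamma_{k}(\mathscr{H}_{K_{N},\ZZ_{p}})(n))$, with inverse $\tfrac1d\pr_{N,K,*}$. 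Combining this with the Galois axiom for $\mathcal{E}_{\QQ_{p}}$ and the functoriality of $\gamma_{k}$, I would deduce
\[
\mathcal{E}_{\ZZ_{p}}(K)=\mathcal{E}_{\ZZ_{p}}(K_{N})\cap\mathcal{E}_{\QQ_{p}}(K)
\qquad\text{inside }\mathcal{E}_{\QQ_{p}}(K_{N})\text{ via }\pr_{N,K}^{*}
\]
(the nontrivial inclusion: if $y=\pr_{N,K}^{*}(z)$ lies in $\mathcal{E}_{\ZZ_{p}}(K_{N})$ with $z\in\mathcal{E}_{\QQ_{p}}(K)$, then $dz=\pr_{N,K,*}(y)\in\mathcal{E}_{\ZZ_{p}}(K)$ by compatibility of the integral trace with $\sigma_{k}$, so $z\in\mathcal{E}_{\ZZ_{p}}(K)$ since $d$ is a unit). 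In particular $\mathcal{E}_{\ZZ_{p}}(K)$ is independent of the auxiliary $K_{N}$.

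Next, for general $L,K\in\Upsilon_{2}$ and $g\in G$ with $g^{-1}Lg\subset K$, writing $g=z_{a}h$ as in Remark \ref{twistsimplification} ($z_{a}$ central, $a$ prime to $cp$), I would pick a multiple $N$ of the level of $K$, prime to $cp$ and sufficiently divisible, so that $K_{N}\trianglelefteq L$, $g^{-1}K_{N}g\subset K_{M}$ for some $K_{M}\trianglelefteq K$ in $\Upsilon_{1}$ with $M\mid N$, and $h\,(N/M)V_{\widehat{\ZZ}}\subset V_{\widehat{\ZZ}}$. In $\mathcal{P}(G,\Upsilon)$ one has $[g]_{L,K}\circ\pr_{N,L}=\pr_{M,K}\circ[g]_{N,M}$, hence $\pr_{N,L}^{*}\circ[g]_{L,K}^{*}=[g]_{N,M}^{*}\circ\pr_{M,K}^{*}$ and $[g]_{L,K,*}\circ\pr_{N,L,*}=\pr_{M,K,*}\circ[g]_{N,M,*}$. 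Granting the principal-level case below, I would conclude: for a class $y\in\mathcal{E}_{\ZZ_{p}}(K)$, the class $[g]_{L,K}^{*}(y)\in\mathcal{E}_{\QQ_{p}}(L)$ pulls back under $\pr_{N,L}^{*}$ into $[g]_{N,M}^{*}(\mathcal{E}_{\ZZ_{p}}(K_{M}))\subset\mathcal{E}_{\ZZ_{p}}(K_{N})$, whence $[g]_{L,K}^{*}(y)\in\mathcal{E}_{\ZZ_{p}}(K_{N})\cap\mathcal{E}_{\QQ_{p}}(L)=\mathcal{E}_{\ZZ_{p}}(L)$; and for $x\in\mathcal{E}_{\ZZ_{p}}(L)$, writing $x=\pr_{N,L,*}(w)$ with $w=\tfrac1{[L:K_{N}]}\pr_{N,L}^{*}(x)\in\mathcal{E}_{\ZZ_{p}}(K_{N})$, one has $[g]_{L,K,*}(x)=\pr_{M,K,*}\big([g]_{N,M,*}(w)\big)\in\pr_{M,K,*}(\mathcal{E}_{\ZZ_{p}}(K_{M}))\subset\mathcal{E}_{\ZZ_{p}}(K)$, the last inclusion because $\pr_{M,K}$ is finite \'etale (so carries an integral trace) and $\sigma_{k}$ is functorial.

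Finally, for $K_{M},K_{N}\in\Upsilon_{1}$ and $g=z_{a}h$ as above: the morphism $[h]_{N,M}\colon\mathrm{Sh}(K_{N})\to\mathrm{Sh}(K_{M})$ is finite \'etale and is described by a genuine isogeny ${}^{h}\psi_{N,M}\colon\mathcal{A}_{N}\to[h]_{N,M}^{*}\mathcal{A}_{M}$ as in \eqref{univisog}; since $h$ has integral component at $p$ (as $g\in G$ and $a\in\ZZ_{p}^{\times}$), this isogeny is prime to $p$ and induces an isomorphism $\mathscr{H}_{N,\ZZ_{p}}\xrightarrow{\sim}[h]_{N,M}^{*}\mathscr{H}_{M,\ZZ_{p}}$, hence isomorphisms on $\Gamma_{k}(\mathscr{H}_{\ZZ_{p}})$ and on $\mathrm{Sym}^{k}(\mathscr{H})$ intertwined by $\gamma_{k}$. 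Ordinary \'etale pullback and the finite \'etale trace then give maps on $\mathrm{H}^{2n-1}_{\et}(\mathrm{Sh}(-),\Gamma_{k}(\mathscr{H}_{\ZZ_{p}})(n))$ which, by that functoriality, correspond under $\sigma_{k}\circ(-\otimes\QQ_{p})$ to $[h]_{N,M}^{*}$ and $[h]_{N,M,*}$; hence these preserve $\mathcal{E}_{\ZZ_{p}}$, and since $z_{a}$ acts trivially on $\mathrm{Sh}(K_{N})$ and by the unit scalar $a^{\pm k}\in\ZZ_{p}^{\times}$ on $\mathcal{E}_{\QQ_{p}}$, so do $[g]_{N,M}^{*}$ and $[g]_{N,M,*}$. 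The step I expect to be the main obstacle is precisely this prime-to-$p$ bookkeeping: checking that the universal isogenies attached to the degeneracy maps are prime to $p$ (so that they act by isomorphisms on the integral sheaves, not merely on their $\QQ_{p}$-linearizations) and that the indices $[K:K_{N}]$ are $p$-adic units (so that the Galois descent of the reduction lemma runs over $\ZZ_{p}$) --- both of which are exactly what the hypotheses $g\in G=\mathbf{G}(\Ab_{f}^{cp})\times\mathbf{G}(\ZZ_{cp})$ and $K\in\Upsilon_{2}$ are designed to guarantee; once these are in place, the rest is formal from the functoriality of $\gamma_{k}$ and the existence of integral trace maps.
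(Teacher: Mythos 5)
Your proof is correct, but it follows a genuinely different route from the paper's. The paper works at the arbitrary levels $L, K\in\Upsilon_{2}$ directly: it observes that the kernel $C_{\gamma}\subset\mathcal{A}_{N}$ defining the universal isogeny ${}^{g}\psi_{N,M}$ is stable under the $L/K_{N}$-Galois action (using $g^{-1}Lg\subset K_{M}$), descends the isogeny to ${}^{g}\psi_{L,K}\colon\mathcal{A}_{L}\to{}^{g}\mathcal{A}_{K}$, obtains an honest isomorphism $\mathscr{H}_{\ZZ_{p},L}\xrightarrow{\sim}[g]^{*}\mathscr{H}_{\ZZ_{p},K}$ of $\ZZ_{p}$-sheaves, and hence an integral map $p_{g}$ which it then identifies with $[g]^{*}_{L,K}$ by comparison at principal level. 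You instead avoid descending the isogeny: your key auxiliary result is the ``lattice intersection'' formula $\mathcal{E}_{\ZZ_{p}}(K)=\mathcal{E}_{\ZZ_{p}}(K_{N})\cap\mathcal{E}_{\QQ_{p}}(K)$ (proved via the cohomological identity $\pr_{*}\pr^{*}=[K:K_{N}]$ and the fact that $[K:K_{N}]\in\ZZ_{p}^{\times}$ --- a consequence of all groups in $\Upsilon$ having full component at $p$); with this in hand you propagate integrality from the principal-level case purely by the commutativity relations in $\mathcal{P}(G,\Upsilon)$. The paper's approach is more constructive (it actually produces the integral map at every level, which is what downstream arguments want), whereas yours is a softer reduction argument that sidesteps the descent of $C_{\gamma}$ entirely but depends more visibly on the $p$-adic-unit structure of the degree. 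Both approaches need the universal isogenies to be prime to $p$ so that they act by isomorphisms on $\mathscr{H}_{\ZZ_{p}}$; you make this explicit, while the paper leaves it implicit in (\ref{sheafiso}) being an isomorphism. One small caveat worth recording in your write-up: your intersection formula relies on the lattice $\mathcal{E}_{\ZZ_{p}}(K)$ being the honest image of $\Gamma_{k}$-cohomology, so that both the pullback along $\mathrm{Sh}(K_{N})\to\mathrm{Sh}(K)$ and the finite \'etale trace are genuinely defined and compatible with $\sigma_{k}$ on the nose; this is what makes the ``nontrivial inclusion'' of your reduction lemma go through despite possible torsion in $\mathrm{H}^{2n-1}_{\et}(\mathrm{Sh}(K),\Gamma_{k}(\mathscr{H}_{\ZZ_{p}})(n))$.
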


\begin{proof}  Let $ ( L  \xrightarrow{  g }   K ) \in \mathcal{P}(G, \Upsilon) $ be such that   $ L, K \in \Upsilon_{2} $.  We wish to show that $ [g]^{*}_{L, K} : \mathcal{E}_{\QQ_{p}}(K) \to  \mathcal{E}_{\QQ_{p}}(L) $ preserves the corresponding $\ZZ_{p}$-submodules.    Since $ 
\mathbf{Z}(\QQ)^{cp} $   acts   by invertible  scalars, 
we may assume wlog that that $ g $ is such that  $ V_{\widehat{\ZZ}}   \subset g V_{\widehat{\ZZ}} $.     Choose $ K_{M} , K_{N} \in  \Upsilon_{1} $ such that  $  K_{M}  \supset K $  and  $  L  \supset K_{N} $. Replacing $ N $ by a multiple, we may assume that   $ M | N $ and that $ g(M/N) V_{\widehat{\ZZ}} \subset V_{\widehat{\ZZ}} $. 
Recall that the isogeny  $ {}^{g} \psi_{N, M} :  \mathcal{A}_{N} \to  { } ^ { g   }    \mathcal{A}_{M} $ 
(\ref{univisog})  is given as the quotient of $ \mathcal{A}_{N} $ by the group $C_{\gamma}  \subset \mathcal{A}_{N}[N]  $  corresponding to the kernel of $ \gamma  : V_{\ZZ}/ N V_{\ZZ} \to V_{\ZZ} / N V_{\ZZ} $ defined by $ v \mapsto g^{-1} v $.  Since $ g^{-1} L g \subset K_{M} $, 
the right  action of $ L / K_{N}  $ on $ \mathcal{A}_{N} \xrightarrow{\sim}    \mathcal{A}_{L} \times  _ { \mathrm{Sh}(L) }   \mathrm{Sh}(K_{N}) $   preserves $ C_{\gamma }   $.  
Thus  
$  {}^{g}    \psi _ { N, M } $  descends to an isogeny $  {}^{g}\psi_{L,K} : \mathcal{A}_{L} \to   {}^{g}\mathcal{A}_{K} $ (where $ {}^{g} \mathcal{A}_{K} := [g]^{*} \mathcal{A}_{K} $)
giving an isomorphism 
\begin{equation}  \label {sheafiso} 
   \mathscr{H}_{\ZZ_{p}, L} \xrightarrow{\sim}   [g]^{*} \mathscr{H} _{\ZZ_{p}, K}    .          
\end{equation} 
Here $ [g]     : \mathrm{Sh}(L)  \to  \mathrm{Sh}(K)  $   denotes the map given by right multiplication by $ g $.  The pullback map    $  \mathrm{H}^{2n-1}_{\et}\big (\mathrm{Sh}(K), \Gamma_{k}(\mathscr{H}_{\ZZ_{p}})  ( n )      \big  )     \to  \mathrm{H}^{2n-1}_{\et} \big    ( \mathrm{Sh}(L),  \Gamma_{k}(\mathscr{H}_{\ZZ_{p}} ) ( n )  
 \big   )   $  defined using (\ref{sheafiso})   
 induces (after tensoring with $ \QQ_{p}$) 
 a map $ p_{g} :  \mathcal{E}_{\QQ_{p}}(K) \to \mathcal{E}_{\QQ_{p}}(L)   $  that   sends  $ \mathcal{E}_{\ZZ_{p}}(K) $ to $ \mathcal{E}_{\ZZ_{p}}(L)  $ by construction.    Since $ { }  ^{g} \psi_{N,M} $ is the pullback of ${}^{g} \psi_{L,K} $ along $ \mathrm{Sh}(K_{N}) \to \mathrm{Sh}(K)$,   
 $p_{g} $ is compatible with  $[g]^{*}_{N,M} : \mathcal{E}_{M,\QQ_{p}} \to \mathcal{E}_{N,\QQ_{p}}$ and therefore equal to the map $ [g]^{*}_{L,K} $ of $ \mathcal{E}_{\QQ_{p}} $.    A similar argument applies to the pushforward   $ [g]_{L,K}^{*}$.      
\end{proof}
For  $ K \in \Upsilon $ arbitrary, choose $ g \in G $ such that $ K' : =  g K g^{-1} \in \Upsilon_{2} $ and define $ \mathcal{E}_{\ZZ_{p}}(K)  :=  [g]_{K', K}^{*} ( \mathcal{E}_{\ZZ_{p}}(K')) $. This is independent of the choice of $ g $. Indeed if $ h \in G $ is such that $ K'' := hKh^{-1} \in \Upsilon_{2} $, $ [hg^{-1}]^{*} $ sends $ \mathcal{E}_{\ZZ_{p}}(K') $ to $ \mathcal{E}_{\ZZ_{p}}(K'') $ by Lemma \ref{conjcomp}. The same result implies that the family  $ \left \{ \mathcal{E}_{\ZZ_{p}}(K)  \, | \,  K \in \Upsilon \right \} $
is compatible under pullbacks and pushforwards.      
So the association $ K \mapsto \mathcal{E}_{\ZZ_{p}}(K) $ assembles into an RIC functor   
\begin{equation}  \label{RIC2}  
 \mathcal{E}  _ { \ZZ_{p}}    :  \mathcal{P}(G, \Upsilon)  \to  \ZZ_{p} \text{-Mod}  
 \end{equation}   
which is CoMack   but \emph{not}  necessarily  Galois.

\begin{remark} 
An alternative way to define $ \mathcal{E}_{\QQ_{p}} $ and its integral sub-lattice is to use the notion of equivariant sheaves associated to algebraic representations of $ \Gb $ along the lines of \cite[\S 4]{Anticyclo}.     It is also possible to incorporate the action of a  monoid $ \Sigma \subset \Gb(\Ab_{f}) $ as in  \emph{loc.\ cit.}  larger than the group   $G $ that allows one to define the action of certain Hecke correspondences at the prime $ p $ as well. 
\end{remark}

\subsection{The main result}      
\label{SchwartzShimurasection} 
Recall that for $ v \in V_{\widehat{\ZZ}}   
$ and $ N \geq 3 $,   we denote by  $ t_{v, N} : \Sh(K_{N}) \to \mathcal{A}_{N} $    the torsion section given by $ \eta_{N}^{\mathrm{univ}} ( v ) $.   For $ N  $  such  that $ K_{N} \in  \Upsilon_{1}   $ and $ v  \in V_{\widehat{\ZZ} }   \setminus   N  V_{\widehat{\ZZ} }    $,       
we denote  $$   \xi_{v,N } :=  \ch  ( v    + N  V_{\widehat{\ZZ}}  ) : X \to \ZZ_{p}  $$ 
the characteristic function of $ v + N V_{\widehat{\ZZ}}  \subset X $.          Note that $ v + N  V_{  \widehat{\ZZ}   } $    is  $ K_{N} $-stable and equals  the product 
$ V_{\ZZ_{cp}} ( v^{cp} +    N V _{\widehat{\ZZ}^{cp}}) $
where $ v ^{cp} $ denotes the image of $ v $  under the projection $ V_{f} \to V_{f}  ^{c   p}   $ and $ \widehat{\ZZ}^{cp} = \widehat{\ZZ}/\ZZ_{cp} $.     Thus   $ \xi_{v,N }   \in \mathcal{S}(K_{N})  $.    
By Theorem \ref{Kingsmain}, we have 
$   N^{k} { } _    {c} \mathrm{Eis}   ^ { k  }     _{\QQ_{p}}  (  t _{v,N} ) \in  \mathcal{E} _ { \ZZ_{p}     }     ( K_{N} )  $ for all    $ v \in V_{\widehat{\ZZ}}  \setminus  N  V_{\widehat{\ZZ}}   $.

\begin{theorem}    
\label{mainpararesult}       There exists a unique  morphism $   \varphi^{k}  :  \mathcal{S} 
\to   \mathcal{E} _{ \ZZ_{p}}   
$ of RIC  functors on $ \mathcal{P}(G   ,     \Upsilon)  $  
such that  for each $ K_{N} \in   \Upsilon_{1}  
$ and $  v \in V_{\widehat{\ZZ}} \setminus   N  V_{\widehat{\ZZ}}   $,  we have 
$  \varphi^{k}(K_{N}) 
 ( \xi_{v,    N } )  =   N ^ { k }       {}_{c}  \mathrm{Eis}^{k}_{\QQ_{p}  }  (  t _ {   v  ,    N    }   )   $. 
\end{theorem}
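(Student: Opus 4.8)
The plan is to build $\varphi^k$ in stages, moving outward from the principal levels where the defining formula fixes the values, and at each stage using the structural axioms of the RIC functors together with Lemma \ref{res+conj=ind} to show the construction is forced and consistent. First I would treat uniqueness. Every $K \in \Upsilon$ contains a normal subgroup $K_N \in \Upsilon_1$, and by the orbit computations of \S4.1 (see Lemma \ref{KMorbits} and the ensuing remark) the characteristic functions $\xi_{v,N}$ for $v \in V_{\widehat\ZZ}\setminus NV_{\widehat\ZZ}$, together with functions supported away from $V_{\widehat\ZZ}$, span $\mathcal{S}(K_N)$ over $\ZZ_p$ after translating by elements of $G$; more precisely any $K$-invariant Schwartz function is, up to the action of $\mathbf{Z}(\QQ)^{cp}$ and conjugation, an integer combination of $[g]^*$-translates of $\xi_{v,N}$'s. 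Since a morphism of RIC functors is determined by compatibility with pullbacks (in particular conjugations and restrictions), the value $\varphi^k(K_N)(\xi_{v,N}) = N^k\,{}_c\mathrm{Eis}^k_{\QQ_p}(t_{v,N})$ propagates to determine $\varphi^k(K)$ on all of $\mathcal{S}(K)$ for every $K \in \Upsilon$. This gives uniqueness provided such a morphism exists.

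For existence, the strategy is: (1) define $\varphi^k$ on $\Upsilon_1$ directly by the formula, extended $\ZZ_p$-linearly once one checks the $\xi_{v,N}$ generate $\mathcal{S}(K_N)$ and the only relations among them are consistent with the distribution relations; (2) check compatibility with pullbacks among groups in $\Upsilon_1$ — this amounts to the norm relation Lemma \ref{Eisnormcomp} and the base-change relation Lemma \ref{Eispullbackcompatibility} applied to the explicit isogenies ${}^g\psi_{N,M}$ of \S\ref{moduli}, combined with the identity $\sigma_k$ matching divided-power and symmetric-power coefficients; (3) extend to $\Upsilon_2$ by Galois descent — if $K \in \Upsilon_2$ and $K_N \triangleleft K$ with $K_N \in \Upsilon_1$, then $\mathcal{E}_{\QQ_p}(K) = \mathcal{E}_N^{K/K_N}$ and $\mathcal{S}(K) = \mathcal{S}(K_N)^{K/K_N}$, so the $(K/K_N)$-equivariance of $\varphi^k(K_N)$ (a consequence of step (2) applied to conjugations, together with the $G$-equivariance built into the residue class $\alpha_c$ via Lemma \ref{alphacnorm}) lets one descend the map; one must also verify the descended map lands in the integral submodule $\mathcal{E}_{\ZZ_p}(K)$, which follows from Theorem \ref{Kingsmain} together with Lemma \ref{conjcomp}; (4) extend to all of $\Upsilon$ by conjugating, using that $\mathcal{E}_{\ZZ_p}(K)$ was defined in \S\ref{functorsGSp2n} precisely as a $G$-translate of $\mathcal{E}_{\ZZ_p}(K')$ for $K' \in \Upsilon_2$. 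At the end, Lemma \ref{res+conj=ind} promotes "compatible under pullbacks" to "morphism of RIC functors," since $\mathcal{E}_{\ZZ_p}$ is CoMack and its restriction maps are injective.

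The genuinely delicate point — the "lifting problem" flagged in the introduction — arises in step (3)–(4), or more precisely already in defining $\varphi^k(K_N)$ itself on functions not supported on $V_{\widehat\ZZ}\setminus\{0\}$. The issue is that for a $K$-invariant $\phi \in \mathcal{S}(K)$ there is no single torsion section to attach, and the naive recipe — pass to the limit $\widehat{\mathcal{E}}_{\QQ_p}$, define the value there, then take $K$-invariants — can leave the integral lattice, because $\mathcal{E}_{\ZZ_p}(K)$ is not the $K$-invariants of $\varinjlim \mathcal{E}_{\ZZ_p}(L)$. The way around this is to realize $\phi$ as a $\ZZ_p$-combination $\sum_i a_i\, \pr_{*}([g_i]^* \xi_{v_i,N_i})$ of pushforwards of translates of basic characteristic functions along maps within $\Upsilon$, where the pushforward is the genuine trace map $\pr_*$ on cohomology (which preserves integral structures), and then define $\varphi^k(K)(\phi) = \sum_i a_i\, \pr_{*}([g_i]^* (N_i^k\,{}_c\mathrm{Eis}^k_{\QQ_p}(t_{v_i,N_i})))$. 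One must then check this is independent of the chosen presentation: two presentations differ by relations, and the relations are exactly the Mackey relations (\ref{Mackeydiagram}) together with the norm and base-change relations of \S\ref{distributioneisenstein}, which hold by Lemmas \ref{Eispullbackcompatibility}, \ref{Eisnormcomp}, \ref{alphacnorm}. So the main obstacle is the bookkeeping that (a) every $\phi \in \mathcal{S}(K)$ admits such a presentation — this uses the elementary topology of the $G$-action on $V_f$ from \S4.1, decomposing $\mathrm{supp}(\phi)$ into finitely many $K_{N}$-orbits each of the form treated in Lemma \ref{KMorbits} — and (b) the resulting value is well-defined, $K$-invariant, integral, and compatible under all pullbacks and pushforwards simultaneously. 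Once (a) and (b) are in place, the CoMack property of $\mathcal{E}_{\ZZ_p}$ and Lemma \ref{res+conj=ind} close the argument.
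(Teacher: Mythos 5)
Your overall architecture matches the paper's: define $\varphi^k$ at principal levels $K_N$ by the given formula, verify pullback-compatibility there via Lemmas \ref{Eispullbackcompatibility} and \ref{Eisnormcomp}, extend to the $\mathbf{Z}(\QQ)^{cp}$-action, then lift to all of $\Upsilon$, closing the argument with Lemma \ref{res+conj=ind} and the CoMack/injectivity properties of $\mathcal{E}_{\ZZ_p}$. You also correctly isolate the genuine obstruction: $\mathcal{E}_{\ZZ_p}$ is not Galois, so taking $K/K_N$-invariants of $\mathcal{E}_{\ZZ_p}(K_N)$ does not return $\mathcal{E}_{\ZZ_p}(K)$.

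However, your step (3) as first written is circular. You say the descended map lands in $\mathcal{E}_{\ZZ_p}(K)$ ``by Theorem \ref{Kingsmain} and Lemma \ref{conjcomp},'' but those only give that $\varphi^k(K_N)(\phi)$ lies in $\mathcal{E}_{\ZZ_p}(K_N)^{K/K_N}$, and the whole difficulty is that the inclusion $\mathcal{E}_{\ZZ_p}(K) \hookrightarrow \mathcal{E}_{\ZZ_p}(K_N)^{K/K_N}$ need not be surjective. You acknowledge this in the next paragraph, but your repair is under-specified in exactly the place where the paper's argument does its real work. In the paper one does \emph{not} present $\phi$ as an arbitrary $\ZZ_p$-combination of pushforwards of the $\xi_{v,N}$: for $\phi = \ch(Kv + N V_{\widehat\ZZ})$ one first uses genuine Galois descent of the universal torsion section $t_{v,N}$ to a section $t_{v,K_v}$ over $\mathrm{Sh}(K_v)$, where $K_v \subset K$ is the stabilizer of $v + N V_{\widehat\ZZ}$ (this is honest descent because sections of an abelian scheme satisfy Galois descent, unlike cohomology). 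Kings' Theorem \ref{Kingsmain}, applied with $\mathrm{Sh}(K_v)$ as the base, then gives $N^k\,{}_c\mathrm{Eis}^k_{\QQ_p}(t_{v,K_v}) \in \mathcal{E}_{\ZZ_p}(K_v)$, and one defines $\varphi^k(K)(\phi) := \pr_{K_v,K,*}$ of this class. Pushing forward from $K_v$ — not from $K_N$ — is essential: pushing from $K_N$ would introduce an extraneous index $[K_v:K_N]$ and fail to hit the right element. Your recipe $\sum_i a_i \pr_*([g_i]^*\xi_{v_i,N_i})$ does not pin this level down.

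The second gap is well-definedness. You assert that two presentations of $\phi$ differ ``exactly by Mackey, norm and base-change relations,'' but establishing that these are the \emph{only} relations is itself a nontrivial claim you do not justify. The paper sidesteps this entirely: Steps 2–3 construct a single canonical map $\widehat\varphi \colon \widehat{\mathcal S} \to \widehat{\mathcal E}_{\QQ_p}$ on the rational limit (using Lemma \ref{KMorbits} to realize every $K_M$-invariant function as a combination of $z_a$-translates of basic $\xi_{v,M}$'s), so there is nothing to check for consistency there; then Step 4 only needs to exhibit, for each generator $\ch(Kv + NV_{\widehat\ZZ})$ of $\mathcal{S}(K)$, one explicit integral preimage of $\widehat\varphi(\phi)$, namely the pushforward from $K_v$ just described. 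Agreement with $\widehat\varphi$ is verified by a single Mackey computation. This is cleaner than trying to prove independence of presentation directly. You should rework steps (3)–(4) along these lines: construct $\widehat\varphi$ first, then produce the integral lift at each $K$ via torsion-section descent to $K_v$ and pushforward.
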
      

\begin{proof}  We are going to construct this   morphism   in several   steps.  Since we exclusively work with $ \ZZ_{p} $-coefficients, we will denote $ \mathcal{E}_{\ZZ_{p}} $ simply as $ \mathcal{E}$.  
\\     

\noindent \textit{Step 1.} We first consider 
principal      levels.    For $ K_{M} \in   \Upsilon_{1}
$, let $ A_{M} \subset \mathcal{S}(K_{M}) $ denote the $ \ZZ_{p}$-span of $ \xi_{v, M} $ for $ v \in V_{\widehat{\ZZ}   }  \setminus 
M V_{\widehat{\ZZ}}   $ and let $ \varphi_{M} :  A_{M} \to \mathcal{E} (K_{M}) $ be the $ \ZZ_{p}$-linear map given by $ \xi_{v,M} \mapsto   M^{k}      { }  _   {  c  }    \mathrm{Eis}_{\QQ_{p}}^{k}(t_{v,M}) $.  This is well-defined since 
$ A_{M} $ is free over $ \xi_{v,M} $ for $ v $ running over  representatives  for $ (  V_{\ZZ}/ M V_{\ZZ}  ) \setminus  \left \{    0  \right \}  $.     Denote by $ \mathcal{S}_{1} , \mathcal{E}_{1}  $ the RIC functors  on   $   \mathcal{P} ( K_{1} , \Upsilon _{1} ) $ obtained by restricting the domain of $ \mathcal{S} $, $ \mathcal{E} $.  Let  $$ \mathscr{F} := \left \{ \varphi_{M} : A_{M} \to  \mathcal{E}(K_{M})  \, | \,   K_{M} \in  \Upsilon_{1}  \right \} 
$$ be the collection of all $ \varphi_{M} $.  
Clearly,   $ \left \{ A_{M} \, | \, K_{M}  \in\Upsilon _{1} \right  \} $ is compatible under pullbacks of $ \mathcal{S}_{1} $.  We claim that  $ \mathscr{F} $ is 
compatible under pullbacks of  $ \mathcal{E}_{1} $.  It suffices 
to check that for any $ ( K_{N}  \xrightarrow{ \kappa } K_{M}    ) \in  \mathcal{P}(K_{1},  \Upsilon_{1} )$ and $  v \in V_{\widehat{\ZZ} }   \setminus  M     V_{\widehat{\ZZ} } $,  the elements $  [\kappa] ^{*} \circ \varphi_{M} (\xi_{v,M} )  \in  \mathcal{E}(K_{N})  $ and $   \varphi_{N} \circ [\kappa] ^{*} (\xi_{v,   M} ) \in \mathcal{E}(K_{N}) $  are equal. Since  $ [\kappa]^{*} (\xi_{v,M}) =  \ch ( \kappa v  + M V _{\widehat{\ZZ}})  $  is the sum of $ \ch(\kappa v + M w  + N  V_{\widehat{\ZZ}}   )  $ for $ w \in V_{\ZZ}   /  d V_{\ZZ} $, we have        
\begin{align*}  [\kappa ] ^{*} \circ \varphi_{M} (\xi_{v,M} )   &  =  [\kappa ] ^{*}   \big (    M ^ { k } {}_{c}  
\mathrm{Eis}^{k}_{\QQ_{p}}( t_{v, M  }  ) \big )   \\ 
\varphi_{N} \circ [\kappa] ^{*} (\xi_{v,M} )  &  =   \sum  \nolimits   _{ w  \in V_{ \ZZ } / d V_{ \ZZ } }   N ^ { k   }           { } _ { c   } \mathrm{Eis} ^{k}_{\QQ_{p}}( t_{\kappa v +  M  w , N } ) . 
\end{align*}  
Now  Lemma  \ref{Eispullbackcompatibility}  applied  to (\ref{pullbackdiagram})  implies that $  [\kappa]^{*}  \big (  {}_{c}   
\mathrm{Eis}^{k}_{\QQ_{p}}( t_{v, M  }  ) \big )  =  {}_{c} \mathrm{Eis}^{k}_{\QQ_{p}}  \big ( [ \kappa]^{*} (t_{v,M})  \big    )  =   {}_{c} \mathrm{Eis}^{k}_{\QQ_{p}}(t_{ \kappa  d v , N } ) . $     But  Lemma  \ref{Eisnormcomp} (in conjunction with Lemma \ref{alphacnorm}) applied to the multiplication by $ d $ isogeny $ [d] : \mathcal{A}_{N} \to \mathcal{A}_{N} $ over $ \mathrm{Sh}(K_{N}) $ implies that $ {}_{c} \mathrm{Eis}^{k}_{\QQ_{p}}  (t_{ \kappa d  v , N } )  = \sum_{ w }  d^{k}   { } _ { c   }    \mathrm{Eis}^{k}_{\QQ_{p}}( t_{\kappa v + M w , N })    $. Thus  the two displayed equalities  above are themselves equal.    \\

\noindent \textit{Step 2.} Next we consider the action of center. Let $ \widehat{A} = \bigcup_{M}  A_{M} $ and let $ \widehat{\mathcal{E}}_{1} $ (resp., $\widehat{\mathcal{E}}$) be the inductive limit of $ \mathcal{E}(K) $ over all  $ K \in \Upsilon_{1} $ (resp., $ K \in \Upsilon$) with respect to restriction maps. By Step 1, we have an induced map $ \widehat{\varphi} : \widehat{A} \to \widehat{\mathcal{E}}_{1} $ of smooth $K_{1}$-representations.   As any element of $ \Upsilon $ contains an element of $  \Upsilon_{1} $, $ \widehat{\mathcal{E}}_{1} = \widehat{\mathcal{E}} $. 
So the target of $ \widehat{\varphi}$ is a $ G $-representation.  We show that $ \widehat{\varphi} $ extends   uniquely   to a map  \begin{equation}   \label{phihateis}                        \widehat{\varphi} :   \widehat{\mathcal{S}} \to   \widehat{\mathcal{E}} 
\end{equation} of $ ( \mathbf{Z}(\QQ)^{cp} K_{1} )     $-representations  as follows. First note that $ \widehat{A} $  is simply the subspace of all functions in $  \widehat{ \mathcal{S}  }  =        \mathcal{S}^{cp}(X) $ that are supported on $ V_{\widehat{\ZZ}}  $. Next recall that  $ \supp ( \phi ) $ for any   non-zero  $ \phi \in \widehat{\mathcal{S}}  $ is of the form $ V_{\ZZ_{cp}} Y $ for $ Y  \subset  
(V_{f}^{cp} ) \setminus \left \{ 0\right \} $. Since $ X = \bigcup _{  M \geq 1 } 1/ M \cdot  ( V_{\widehat{\ZZ}} \setminus \left \{ 0 \right \} ) $,  there exists a   positive integer $ a \in I^{cp} $ such that  $   a    \cdot \supp  ( \phi   ) \subset V_{\widehat{\ZZ}} $. So $ \phi = z_{a}^{-1} \cdot  \xi $ for some $ \xi \in  \widehat{A} $ and 
   the only possible extension is to set $$    
\widehat{\varphi} ( \phi )  :=  a^{-k}  \widehat{\varphi}( \xi) .   $$   
For this to be    well-defined, we must have   $ \widehat{\varphi} ( z_{a} \cdot \xi_{v,M} )$   equal to $    a^{k} \widehat{\varphi} ( \xi_{v,M}  ) $ for all $ a \in I^{cp} $,  $M$  satisfying $ K_{M} \in \Upsilon_{1} $ and   $ v \in V_{\widehat{\ZZ}   }  \setminus M V_{\widehat{\ZZ}} $. But this follows since $ z_{a} \cdot \xi_{v, M } = \xi_{av, aM} \in \mathcal{S}(K_{aM}) $ is mapped under $ \varphi_{aM} $ to $ (aM)^{k} {}_{c} \mathrm{Eis}^{k}_{\QQ_{p}} (t_{av,aM}) $  and this  class      coincides with  $ \pr_{aM, M}^{*} $ applied to $ a^{k} \varphi_{M} (\xi_{v,M}) = (aM)^{k}  { } _ { c }   \mathrm{Eis}^{k}_{\QQ_{p}}(t_{v,M}) $ by Lemma  \ref{Eispullbackcompatibility}.  
\\

\noindent \textit{Step 3.} We now  enlarge the domain        of each $ \varphi_{M}  $   to all of $ \mathcal{S}(K_{M})   $  for any    $ K_{M} \in   \Upsilon_{1}    $.  Let   $ B_{M}  \subset  \mathcal{S}(K_{M}) $ denote the $ \ZZ_{p}$-submodule of all finite sums  $ \sum_{i} z_{a_{i}} \cdot \xi_{i}$ where $ \xi_{i} \in A_{M}    $ and $ a_{i} \in \mathbf{Z}(\QQ)^{cp} $.    
For any $ \phi = \sum_{i} z_{a_{i}} \cdot \xi_{i} \in  B_{M} $, set $$ \varphi_{  M  }   ( \phi )   :     = \sum \nolimits _  {i }   a    _ {  i } ^ { k }      \,       \varphi_{M}( \xi_{i} ) . $$ 
Then $  \varphi_{    M   }        :      B_{M}  \to  \mathcal{E} (K_{M}) $ is  
well-defined (and uniquely determined)
by Step 2  and injectivity of restrictions of $ \mathcal{E}  $.  
We  claim that $ B_{M} = 
\mathcal{S}(K_{M}  )     $ for all $ M$. It suffices to show that $ \ch(C) \in B_{M }$ for $ C  \subset V_{\widehat{\ZZ}} $ any $K_{M}$-invariant compact open subset of the form $ V_{\ZZ_{cp}}Y $ for $ Y\subset V_{f}^{cp} \setminus \left \{ 0 \right \} $. For such $ C $, we can pick a $ N  = N_{C} \in I^{cp} $ a multiple of $ M $ such that $ C $ is a finite disjoint union of cosets in $ V_{\widehat{\ZZ}} / N V_{\widehat{\ZZ}} $.    Since $ C $ is also  $  K_{M} $-invariant, we can write $ C $ as a finite disjoint union of sets of the form $ K_{M}v + N V_{\widehat{\ZZ}} $ and we may  also assume $ v \in V_{\ZZ} $.   But  Lemma \ref{KMorbits} implies that $ \ch( K_{M} v + N V_{\widehat{\ZZ}}    ) $ can be written as a  difference of two  sums of functions of the form $ \ch ( aw + M a V_{\widehat{\ZZ}} ) $ for $ a \in I^{cp} $, $ w \notin M V_{\widehat{\ZZ} }   $. This  completes the step.  \\

\noindent   \textit{Step 4.}  We define     maps for levels in $ K \in \Upsilon_{2} $.  As in Step 3, we need to show that all elements of $ \widehat{\varphi}\big(\mathcal{S}(K)\big) $ lift to $ \mathcal{E}(K) $. 
Fix   any   $  K_{N} \in  \Upsilon_{1} $  such that $ K_{N}  \subset K  $.  Recall that $ K \subset K_{1}  $  acts on (the left of) $ V_{\ZZ} / N V_{\ZZ} $.  For any $v  \in V_{\widehat{\ZZ}} \setminus N V_{\widehat{\ZZ}} $, let $ K_{v}   \subset K $ denote the stabilizer of $ v  + N V_{\widehat{\ZZ}} \in V_{\ZZ}  / N V_{\ZZ} $.
Since $  \pr_ { K _ { N }    ,  K_{v}} $ is Galois 
and $ v + N V_{\widehat{\ZZ}} $ is $  K_{v} /K_{N} $-invariant,  the  section 
$ t_{v, N }  =  \eta_{N}^{\mathrm{univ}}(v)  $ descends to a $ N$-torsion section $ t_{v, K_{v}} :  \mathrm{Sh}(K_{v}) \to \mathcal{A}_{K_{v}} $. Thus  for any $ \gamma \in K  $, we  have $  [\gamma]_{K_{v}, K_{N} } ^{*}  (t_{v, K_{v}} )  =   t_{ \gamma v , N } $.   Now 
note  that   $ \ch ( K v + N V_{\widehat{\ZZ}} ) = 
\sum_{ \gamma \in K / K_{v} } \xi_{\gamma v, N   }    $ is an element of $ \mathcal{S}(K) $.
We define $$ \varphi_{K} \big ( \ch(K  v     + N V_{\widehat{\ZZ}}  )   \big   )      :  =
        \pr_{K_{v} , K  ,  *  }     \big  (   N^{k}        {}_{c}\mathrm{Eis}^{k}_{\QQ_{p}}(t_{v, K_{v}})  \big    )   $$
which   belongs  to 
$ \mathcal{E}(K)  $ by Theorem \ref{Kingsmain}.    
That 
this  agrees with $
\sum _ { \gamma \in K /   K _ { v  }     }  \widehat{\varphi} (  \xi_{\gamma  v , N }  )   $ in $ \widehat{\mathcal{E}}   $ 
follows since
$$ \pr_{K, K_{N} }^{*}  \circ \pr_{K_{v}, K, *} \big (N^{k} {} _{c}    \mathrm{Eis}_{\QQ_{p}}^{k}(t_{v,K}) \big )  = \sum  \nolimits  _{  \gamma \in  K /   K _ { v  }     }   [\gamma]_{K_{v}, K_{N} }^{*}   \big (      N^{k}    {}_{c} \mathrm{Eis}_{\QQ_{p}}^{k}(t_{v, K_{v}})   \big ) $$ by   the     Mackey axiom  
and  since     $ 
    \varphi_{N} ( \xi_{\gamma v, N } ) 
    =  [\gamma]_{K_{v} , K_{N} } ^{*}   \big (   N^{k}      {}_{c} \mathrm{Eis}_{\QQ_{p}}^{k}(t_{ v ,  K_{v}    }  ) \big )  $ for any $ \gamma \in  K    $  
by Lemma \ref{Eispullbackcompatibility}.  

As in Step 3, we can use the above  to define $ \varphi_{K} $ for any finite linear combination of functions above scaled by elements of $ \mathbf{Z}(\QQ)^{cp}    $. So it only remains to argue that all elements of $ \mathcal{S}(K) $ are of this form. Again,  it suffices to show this for characteristic functions $ \ch (C) \in \mathcal{S}(K) $ for some $ C \subset V_{\widehat{\ZZ}}  $. But this follows since we can find a sufficiently large $ N=  N_{C} \in I^{cp} $ such that $  K_{N} \subset K $ and $ C $ is a finite disjoint union of sets of the form $ K v + N V_{\widehat{\ZZ}} $.  
 \\

\noindent  \textit{Step 5.} The final step is to show that  the family 
$ \left \{ \varphi_{K} \, | \,  K  \in \Upsilon_{2 }   \right \}$ 
extends  uniquely  to a   morphism of functors on $ \mathcal{P}(G, \Upsilon) $. To this end,  it  suffices  to   establish     $  \widehat{\varphi} $ (\ref{phihateis})  is a map of $ G $-representations. Indeed, any $ K \in \Upsilon $ is such that $ g K g^{-1}  \in  \Upsilon_{2} $ 
for some $ g \in G $ and we can define $ \varphi_{K}(\xi) $ as  $   [ g ] _ {  g K g ^{-1}  , K , *  }  \circ    \varphi_{gKg^{-1}}(g \cdot \xi) $. The resulting family of homomorphisms indexed by $ \Upsilon $      would then be  compatible under pullbacks by $  G $-equivariance 
and  Lemma \ref{res+conj=ind} would  give the  desired claim.

Fix $ g \in \Gb(\Ab_{f}) $ and $ \xi \in \widehat{\mathcal{S}} $. We wish to show that $ \widehat{\varphi}(g \cdot \xi) =  g \cdot   \widehat{\varphi}(\xi)   $.     Recall that  $ \widehat{\varphi} $ was shown to equivariant with respect to $ \mathbf{Z}(\QQ) ^ {  c p   }   $ in Step 2. Since any $ \xi \in \widehat{\mathcal{S}} $ is $ K_{M}  $-invariant for some $ K_{M }   \in \Upsilon_{1}    $, 
it   suffices  to restrict to the case  where    $ \xi = \xi_{v,M} $ for some $ v $, $ M $  and $ g $   satisfies    
  $ V_{\widehat{\ZZ}}  \subset g V_{\widehat{\ZZ} }   $.  Let  $ N \in  I^{cp}     $ be a
multiple  of  $ M $  such that
$ g ( N/M) V_{\widehat{\ZZ}}   \subset   V_{\widehat{\ZZ}}   $, $  {}^{g} \psi  
: \mathcal{A}_{N}  \to   {  }  ^ { g  }   \mathcal{A}_{M}  $ denote the   isogeny in (\ref{univisog})  and   $    {}^{g}t_{v,M} :  \mathrm{Sh}(K_{N}) \to  {}^{g} \mathcal{A}_{M} $ denote the torsion section that is obtained as the pullback of $ t_{v,M}  :  \mathrm{Sh}(K_{M}) \to  \mathcal{A}_{M}  $ under 
$  [g]_{N, M }  : \mathrm{Sh}(K_{N})  \to  
  \mathrm{Sh}(K_{M})  $. 
\begin{equation} \label{twisiso} 
\begin{tikzcd}[sep = large]
\mathcal{A}_{N} \arrow[rd, "\pi_{N}"'] \arrow[r, "{}^{g} \psi"] & {}^{g}\mathcal{A}_{M} \arrow[d] \arrow[r]   & \mathcal{A}_{M} \arrow[d, "\pi_{M}"']  \\
& \mathrm{Sh}(K_{N}) \arrow[r, "{[g]_{N,M}}"'] \arrow[u, "{{}^{g}t_{v,M}}"', dashed, bend right, shift right=2] & \mathrm{Sh}(K_{M}) \arrow[u, "{t_{v,M}}"', bend right, shift right=2]
\end{tikzcd}
\end{equation}
Then the  universal torsion sections of $ \mathcal{A}_{N} $ that map to $  { } ^ { g }  t_{v,M}  $  under  $  { } ^ { g }     \psi       $  are $ t_ {  gdv + w, N} $  where $ d : = N / M $ and   $ w \in g N V_{\widehat{\ZZ}} $ runs  over representatives of $ g  (  N V_{\widehat{\ZZ}}   )     / N  V_{\widehat{\ZZ} }   \subset  V _ {   \ZZ } / N V_{ \ZZ    }   $.
Now $  [ g ]_{N, M } ^{ * }  \cdot \xi_{v,M} 
=  z_{d}^{-1} \cdot   \ch (  gd v +  g N V_{\widehat{\ZZ}}) $ and the right hand side expands as $ \sum \nolimits _ { w  
} z_{d}^{-1} \cdot \ch (   g  d     v + w + N V_{\widehat{\ZZ}} ) $ with $ w $ running over $ g (   N V_{\widehat{\ZZ}}   )  / N V_{\widehat{\ZZ}}  $. So we  need to show that 
$$   [g]_{N,M}^{*}   \left (  M^{k}  {}_{c}   \mathrm{Eis}   ^  { k  } _ { \QQ_{p} }  ( t_{v, M } )     \right  )          = \sum   \nolimits  _{ w } \mathrm{Sym}^{k} ( {}^{g}  \psi  )_{*} \left  (  d^{-k}  N ^{k} \,   { } _ {c}  \mathrm{Eis}^{k}_{\QQ_{p}}(t_{ gdv   + w , N }    )      \right     ) .   $$   But this follows  by Lemma \ref{Eispullbackcompatibility} and Lemma \ref{Eisnormcomp} as before. 
\end{proof}    

By Step 3 of the proof, the image of $ \varphi^{k}(K_{N}) $ is the $ \ZZ_{p}$-linear span of Eisenstein classes along non-zero universal $N $-torsion sections. An interesting implication of this is the following. 
\begin{corollary} For any $ N \geq 3 $ prime to $ cp $,  the $ \ZZ_{p}$-submodule of  $ \, \mathrm{H}^{2n-1}_{\et }   \big  ( \mathrm{Sh}(K_{N}) ,  \mathrm{Sym}^{k}(\mathscr{H}_{\QQ_{p}})(n)  \big  )  $ spanned by $ {}_{c} \mathrm{Eis}^{(k)}_{\QQ_{p}}(t_{v,N}) $ for  non-zero $ v \in V_{\ZZ} / N V_{\ZZ} 
$ is stable under the natural action of Hecke correspondences $ [K_{N} g K_{N}] $ for any $ g \in G $.    
\end{corollary}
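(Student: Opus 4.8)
The plan is to realize the Hecke correspondence $[K_{N}gK_{N}]$ as a composite of a conjugation pullback and a trace pushforward inside the category $\mathcal{P}(G,\Upsilon)$, and then to transport stability across the morphism $\varphi^{k}$ of Theorem \ref{mainpararesult}. Fix $g\in G$ and $K_{N}\in\Upsilon_{1}$, and set $L:=K_{N}\cap gK_{N}g^{-1}$. Since $\Upsilon$ is closed under $G$-conjugation and intersections, $L\in\Upsilon$; moreover $g^{-1}Lg=(g^{-1}K_{N}g)\cap K_{N}\subset K_{N}$ and $L\subset K_{N}$, so both $(L\xrightarrow{g}K_{N})$ and $\pr_{L,K_{N}}=(L\xrightarrow{e}K_{N})$ are morphisms of $\mathcal{P}(G,\Upsilon)$. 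By Lemma \ref{conjcomp} and the discussion following it, the inclusions $\mathcal{E}_{\ZZ_{p}}(K)\subset\mathcal{E}_{\QQ_{p}}(K)$ are compatible with all pullbacks and pushforwards, so composition with $\varphi^{k}$ yields a transformation $\varphi^{k}:\mathcal{S}\to\mathcal{E}_{\QQ_{p}}$ compatible with pullbacks and pushforwards; and by construction the pullback and pushforward of $\mathcal{E}_{\QQ_{p}}$ at level $K_{N}$ realize the geometric base change and trace maps on $\mathrm{H}^{2n-1}_{\et}\big(\mathrm{Sh}(K_{N}),\mathrm{Sym}^{k}(\mathscr{H}_{\QQ_{p}})(n)\big)=\mathcal{E}_{\QQ_{p}}(K_{N})$. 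With the usual normalization the double-coset operator $[K_{N}gK_{N}]$ on this cohomology group is exactly the composite $\pr_{L,K_{N},*}\circ[g]^{*}_{L,K_{N}}$ computed in $\mathcal{E}_{\QQ_{p}}$, while the same composite computed in $\mathcal{S}$ is an endomorphism $T_{g}$ of $\mathcal{S}(K_{N})=\mathcal{S}^{cp}(X)^{K_{N}}$.

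Next I would paste together the naturality squares for these two morphisms. Since $\varphi^{k}$ is compatible with both pullbacks and pushforwards, splicing the square for $(L\xrightarrow{g}K_{N})$ to the left of the square for $\pr_{L,K_{N}}$ produces the intertwining relation
\[
\varphi^{k}(K_{N})\circ T_{g}\;=\;\pr_{L,K_{N},*}\circ[g]^{*}_{L,K_{N}}\circ\varphi^{k}(K_{N})\;=\;[K_{N}gK_{N}]\circ\varphi^{k}(K_{N})
\]
of maps $\mathcal{S}(K_{N})\to\mathcal{E}_{\QQ_{p}}(K_{N})$, where on the right we regard $\mathcal{E}_{\ZZ_{p}}(K_{N})\subset\mathcal{E}_{\QQ_{p}}(K_{N})$.

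Finally, by Step 3 of the proof of Theorem \ref{mainpararesult} the image of $\varphi^{k}(K_{N})$ is the $\ZZ_{p}$-span $V_{N}$ of the classes $N^{k}\,{}_{c}\mathrm{Eis}^{k}_{\QQ_{p}}(t_{v,N})$ for non-zero $v\in V_{\ZZ}/NV_{\ZZ}$; since $(N,p)=1$ the scalar $N^{k}$ is a unit in $\ZZ_{p}$, so $V_{N}$ is precisely the submodule appearing in the statement. As $T_{g}$ carries $\mathcal{S}(K_{N})$ into itself, the intertwining relation gives
\[
[K_{N}gK_{N}](V_{N})\;=\;[K_{N}gK_{N}]\big(\varphi^{k}(K_{N})(\mathcal{S}(K_{N}))\big)\;=\;\varphi^{k}(K_{N})\big(T_{g}(\mathcal{S}(K_{N}))\big)\;\subset\;\varphi^{k}(K_{N})(\mathcal{S}(K_{N}))\;=\;V_{N},
\]
which is the assertion. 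The one step that is not purely formal is the identification recorded above of the \'etale Hecke correspondence on $\mathrm{Sh}(K_{N})$ with the abstract composite $\pr_{*}\circ[g]^{*}$ in the RIC functor $\mathcal{E}_{\QQ_{p}}$ --- the standard dictionary between cohomological Hecke operators and double-coset operators --- and I expect this to be the main (though routine) obstacle; everything else is bookkeeping in $\mathcal{P}(G,\Upsilon)$ together with what has already been proved.
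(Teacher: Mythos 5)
Your argument is correct and is essentially the proof the paper has in mind: the remark preceding the corollary reduces it to the observation that the image of $\varphi^{k}(K_{N})$ is exactly the $\ZZ_{p}$-span in question, after which naturality of $\varphi^{k}$ with respect to both pullbacks and pushforwards forces the intertwining relation $\varphi^{k}(K_{N})\circ(\pr_{L,K_{N},*}\circ[g]^{*}_{L,K_{N}})=[K_{N}gK_{N}]\circ\varphi^{k}(K_{N})$ that you write down, with $L=K_{N}\cap gK_{N}g^{-1}$. Your write-up simply makes explicit the standard identification of the cohomological double-coset operator with the composite $\pr_{*}\circ[g]^{*}$ in the RIC formalism and the cancellation of the unit $N^{k}\in\ZZ_{p}^{\times}$; nothing is missing.
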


\begin{remark} Note that our parametrization result is only made for the \emph{image} of the integral Eisenstein classes \cite[Definition 6.4.3]{Kings15} in the $ \QQ_{p}$-cohomology, and not for the integral classes themselves.   For $ n = 1 $ and $ c $ satisfying $ (c, 6p) = 1 $, an alternative  parametrization for these integral classes can be obtained as follows.   Since the units of the structure sheaf of modular curves have Galois descent, it is straightforward to define Siegel units associated with arbitrary Schwartz functions. One can then exploit \cite[Theorem 4.7.1]{Kings14} and the compatiblity of Ohta's twisting morphism with Kings' moment maps (see \cite[Theorem 4.5.1(2)]{KLZ}) to define `Eisenstein classes' integrally for any Schwartz function of the form specified in \cite[Definition 9.1.3]{LSZ}. These classes then agree with the integral Eisenstein classes defined by Kings (up to a normalization factor)  when the Schwartz function corresponds to a genuine $N$-torsion section by the results of \cite{Kings14}.    The adelic distribution relations of these classes then immediately follow from those of the Siegel units.    We are grateful to David Loeffler for sharing this observation.    
\end{remark} 
\bibliographystyle{amsalpha}    
\bibliography{refs} 

\providecommand{\bysame}{\leavevmode\hbox to3em{\hrulefill}\thinspace}
\providecommand{\MR}{\relax\ifhmode\unskip\space\fi MR }
\providecommand{\MRhref}[2]{%
  \href{http://www.ams.org/mathscinet-getitem?mr=#1}{#2}
}
\providecommand{\href}[2]{#2}
\begin{thebibliography}{KLZ17}

\bibitem[BL94]{BeilinsonLevin}
A.~Be\u{\i}linson and A.~Levin, \emph{The elliptic polylogarithm}, Motives ({S}eattle, {WA}, 1991), Proc. Sympos. Pure Math., vol.~55, Amer. Math. Soc., Providence, RI, 1994, pp.~123--190. \MR{1265553}

\bibitem[CGS]{EulerGU22}
Antonio Cauchi, Andrew Graham, and Syed Waqar~Ali Shah, \emph{Euler systems for exterior square motives}, in preparation.

\bibitem[Col04]{Colmez}
Pierre Colmez, \emph{\href{http://www.numdam.org/book-part/SB_2002-2003__45__251_0/}{La conjecture de {B}irch et {S}winnerton-{D}yer {$p$}-adique}}, Ast\'{e}risque (2004), no.~294, ix, 251--319. \MR{2111647}

\bibitem[Eke90]{Ekedahl}
Torsten Ekedahl, \emph{On the adic formalism}, The {G}rothendieck {F}estschrift, {V}ol. {II}, Progr. Math., vol.~87, Birkh\"{a}user Boston, Boston, MA, 1990, pp.~197--218. \MR{1106899}

\bibitem[Fal05]{FaltingsEisenstein}
Gerd Faltings, \emph{\href{https://doi.org/10.1007/0-8176-4447-4_8}{Arithmetic {E}isenstein classes on the {S}iegel space: some computations}}, Number fields and function fields---two parallel worlds, Progr. Math., vol. 239, Birkh\"{a}user Boston, Boston, MA, 2005, pp.~133--166. \MR{2176590}

\bibitem[GS23]{Anticyclo}
Andrew Graham and Syed Waqar~Ali Shah, \emph{\href{ https://doi.org/10.1112/plms.12566}{Anticyclotomic {E}uler systems for unitary groups}}, Proc. Lond. Math. Soc. (3) \textbf{127} (2023), no.~6, 1577--1680. \MR{4673434}

\bibitem[HK99]{HuberKings99}
Annette Huber and Guido Kings, \emph{\href{ https://doi.org/10.1007/s002220050295}{Degeneration of {$l$}-adic {E}isenstein classes and of the elliptic polylog}}, Invent. Math. \textbf{135} (1999), no.~3, 545--594. \MR{1669288}

\bibitem[HK18]{HuberKings}
\bysame, \emph{\href{https://doi.org/10.1090/jag/717}{Polylogarithm for families of commutative group schemes}}, J. Algebraic Geom. \textbf{27} (2018), no.~3, 449--495. \MR{3803605}

\bibitem[Hub97]{Huber}
Annette Huber, \emph{\href{https://doi.org/10.1023/A:1000273606373}{Mixed perverse sheaves for schemes over number fields}}, Compositio Math. \textbf{108} (1997), no.~1, 107--121. \MR{1458759}

\bibitem[Jan88]{Jannsen1988}
Uwe Jannsen, \emph{\href{https://doi.org/10.1007/BF01456052}{Continuous \'{e}tale cohomology}}, Math. Ann. \textbf{280} (1988), no.~2, 207--245. \MR{929536}

\bibitem[Kat04]{kkato}
Kazuya Kato, \emph{\href{http://www.numdam.org/item/AST_2004__295__117_0/}{{$p$}-adic {H}odge theory and values of zeta functions of modular forms}}, no. 295, 2004, Cohomologies $p$-adiques et applications arithm\'{e}tiques. III, pp.~ix, 117--290. \MR{2104361}

\bibitem[Kin99]{KingsKtheory}
Guido Kings, \emph{\href{https://doi.org/10.1515/crll.1999.088}{{$K$}-theory elements for the polylogarithm of abelian schemes}}, J. Reine Angew. Math. \textbf{517} (1999), 103--116. \MR{1728545}

\bibitem[Kin09]{KingsNote}
\bysame, \emph{\href{https://doi.org/10.1007/s00209-008-0387-5}{A note on polylogarithms on curves and abelian schemes}}, Math. Z. \textbf{262} (2009), no.~3, 527--537. \MR{2506305}

\bibitem[Kin15]{Kings14}
\bysame, \emph{\href{https://doi.org/10.1017/CBO9781316163757.013}{Eisenstein classes, elliptic {S}oul\'{e} elements and the {$\ell$}-adic elliptic polylogarithm}}, The {B}loch-{K}ato conjecture for the {R}iemann zeta function, London Math. Soc. Lecture Note Ser., vol. 418, Cambridge Univ. Press, Cambridge, 2015, pp.~239--296. \MR{3497682}

\bibitem[Kin16]{Kings15}
\bysame, \emph{\href{https://doi.org/10.1007/978-3-319-45032-2_10}{On {$p$}-adic interpolation of motivic {E}isenstein classes}}, Elliptic curves, modular forms and {I}wasawa theory, Springer Proc. Math. Stat., vol. 188, Springer, Cham, 2016, pp.~335--371. \MR{3629656}

\bibitem[KLZ17]{KLZ}
Guido Kings, David Loeffler, and Sarah~Livia Zerbes, \emph{\href{https://doi.org/10.4310/CJM.2017.v5.n1.a1}{Rankin-{E}isenstein classes and explicit reciprocity laws}}, Camb. J. Math. \textbf{5} (2017), no.~1, 1--122. \MR{3637653}

\bibitem[KW01]{WeissauerKiehl}
Reinhardt Kiehl and Rainer Weissauer, \emph{\href{https://doi.org/10.1007/978-3-662-04576-3}{Weil conjectures, perverse sheaves and {$l$}'adic {F}ourier transform}}, Ergebnisse der Mathematik und ihrer Grenzgebiete. 3. Folge. A Series of Modern Surveys in Mathematics [Results in Mathematics and Related Areas. 3rd Series. A Series of Modern Surveys in Mathematics], vol.~42, Springer-Verlag, Berlin, 2001. \MR{1855066}

\bibitem[Lau05]{Laumon}
G\'{e}rard Laumon, \emph{\href{http://www.numdam.org/item/?id=AST_2005__302__1_0}{Fonctions z\^{e}tas des vari\'{e}t\'{e}s de {S}iegel de dimension trois}}, no. 302, 2005, Formes automorphes. II. Le cas du groupe ${\rm{G}}Sp(4)$, pp.~1--66. \MR{2234859}

\bibitem[Lem18]{LemmaResidue}
Francesco Lemma, \emph{\href{https://doi.org/10.1017/S0017089517000271}{On the residue of {E}isenstein classes of {S}iegel varieties}}, Glasg. Math. J. \textbf{60} (2018), no.~3, 539--553. \MR{3830535}

\bibitem[LSZ22]{LSZ}
David Loeffler, Christopher Skinner, and Sarah~Livia Zerbes, \emph{\href{https://doi.org/10.4171/JEMS/1124}{Euler systems for {GS}p(4)}}, J. Eur. Math. Soc. (JEMS) \textbf{24} (2022), no.~2, 669--733. \MR{4382481}

\bibitem[Mil86]{abvar}
J.~S. Milne, \emph{{A}belian {V}arieties}, {A}rithmetic Geometry ({J}oseph {H}.~{S}ilverman {Cornell}~Gary, ed.), Springer-Verlag, Berlin, 1986, pp.~103 -- 150.

\bibitem[Mil03]{MilneShimura}
\bysame, \emph{{I}ntroduction to {S}himura {V}arieties}, {H}armonic {A}nalysis, {T}he {T}race {F}ormula, and {S}himura {V}arieties ({R}obert~{K}ottwitz {J}ames {A}rthur, {D}avid~{E}llwood, ed.), Clay Mathematics Proceedings, vol.~4, American Mathematical Society, 2003, pp.~265 -- 378.

\bibitem[Sha]{Siegel2}
Syed Waqar~Ali Shah, \emph{Euler systems for motives of {S}iegel modular sixfolds}, in preparation.

\bibitem[Sha24]{AESthesis}
Syed Waqar~Ali Shah, \emph{\href{https://arxiv.org/abs/2409.03517}{On constructing zeta elements for {S}himura varieties}}, 2024.

\bibitem[SS]{SkinnerVincentelli}
Christopher {Skinner} and Marco {Sangiovanni-Vincentelli}, \emph{An {E}uler system for the adjoint of a modular form}, in preparation.

\bibitem[TW95]{Webb}
Jacques Th\'{e}venaz and Peter Webb, \emph{\href{https://www.ams.org/journals/tran/1995-347-06/S0002-9947-1995-1261590-5/S0002-9947-1995-1261590-5.pdf}{The structure of {M}ackey functors}}, Trans. Amer. Math. Soc. \textbf{347} (1995), no.~6, 1865--1961. \MR{1261590}

\bibitem[Wil97]{Wildeshaus}
J\"{o}rg Wildeshaus, \emph{\href{https://doi.org/10.1007/BFb0093051}{Realizations of {P}olylogarithms}}, Lecture Notes in Mathematics, vol. 1650, Springer-Verlag, Berlin, 1997. \MR{1482233}

\end{thebibliography}
\Addresses    
\end{document}